\definecolor{darkgreen}{rgb}{0,0.45,0} 
\definecolor{darkred}{rgb}{0.75,0,0}
\definecolor{darkblue}{rgb}{0,0,0.6} 
\tikzset{
  module/.style={
    postaction={decorate},
    decoration={
      markings,
      mark=at position #1 with {\arrow{|}}}},
  module/.default=0.5,
  we/.style=
  { postaction={%
      decorate,
      decoration={
        markings,
        mark=at position #1 with {%
          \node[transform shape, yshift=.2em]{%
            \resizebox{0.5em}{!}{$\sim$}};}}}},
  we/.default=0.5,
  we'/.style=
  { postaction={%
      decorate,
      decoration={
        markings,
        mark=at position #1 with {%
          \node[transform shape, yshift=-.2em, rotate=180]{%
            \resizebox{0.5em}{!}{$\sim$}};}}}},
  we'/.default=0.5,
  iso/.style=
  { postaction={%
      decorate,
      decoration={
        markings,
        mark=at position #1 with {%
          \node[transform shape, yshift=.2em]{%
            \resizebox{0.5em}{!}{$\simeq$}};}}}},
  iso/.default=0.5,
  iso'/.style=
  { postaction={
      decorate,
      decoration={
        markings,
        mark=at position #1 with {%
          \node[transform shape, yshift=-.2em, rotate=180]{%
            \resizebox{0.5em}{!}{$\simeq$}};}}}},
  iso'/.default=0.5,
}
\tikzset{
  proarrow/.style={->, module},
  proequal/.style={-, double, module},
  prodotted/.style={->,dotted, module},
  prodashed/.style={->,dashed, module},
  wearrow/.style={->, we},
  wedashed/.style={->, dashed, we},
  wedotted/.style={->, dotted, we},
  tfibarrow/.style={->>, we=0.45},
  tfibdotted/.style={->>,dotted, we=0.45},
  tfibdashed/.style={->>,dashed, we=0.45},
  tcofarrow/.style={>->, we},
  tcofdashed/.style={>->, dashed, we},
  uwearrow/.style={->, we'},
  uwedashed/.style={->, dashed, we'},
  uwedotted/.style={->, dotted, we'},
  utfibarrow/.style={->>, we'=0.45},
  utfibdotted/.style={->>,dotted, we'=0.45},
  utfibdashed/.style={->>,dashed, we'=0.45},
  utcofarrow/.style={>->, we'},
  isoarrow/.style={->, iso},
  isodashed/.style={->, dashed, iso},
  uisoarrow/.style={->, iso'},
  uisodashed/.style={->, dashed, iso'},
  isocell/.style={=>, iso},
  isocelldashed/.style={=>, dashed, iso},
  uisocell/.style={=>, iso'},
  uisocelldashed/.style={=>, dashed, iso'}
}
\DeclareMathOperator*{\aamalg}{\amalg}
\setlist{}
\newtheorem{thm}{Theorem}[subsection]
\newtheorem{lem}[thm]{Lemma}
\newtheorem{prop}[thm]{Proposition}
\newtheorem{cor}[thm]{Corollary}
\newtheorem{conj}[thm]{Conjecture}
\theoremstyle{definition}
\newtheorem{defn}[thm]{Definition}
\newtheorem{ex}[thm]{Example}
\newtheorem{cons}[thm]{Construction}
\theoremstyle{remark}
\newtheorem{rmk}[thm]{Remark}
\newtheorem{dig}[thm]{Digression}
\newtheorem{apo}[thm]{Apology}
\newtheorem{cav}[thm]{Caveat}
\let\c@equation\c@thm
\numberwithin{equation}{subsection}
\newcommand{\op}{\textup{op}}
\newcommand{\id}{\textup{id}}
\newcommand{\ob}{\textup{ob}}
\newcommand{\mor}{\textup{mor}}
\newcommand{\ev}{\textup{ev}}
\newcommand{\proj}{\textup{proj}}
\newcommand{\inj}{\textup{inj}}
\newcommand{\sk}{\textup{sk}}
\newcommand{\hocolim}{\textup{hocolim}}
\newcommand{\cod}{\textup{cod}}
\newcommand{\colim}{\textup{colim}}
\newcommand{\cat}[1]{\textup{\textsf{#1}}}
\newcommand{\EE}{\mathbb{E}}
\newcommand{\FF}{\mathbb{F}}
\newcommand{\LL}{\mathbb{L}}
\newcommand{\NN}{\mathbb{N}}
\newcommand{\PP}{\mathbb{P}}
\newcommand{\RR}{\mathbb{R}}
\DeclareMathAlphabet{\mathbbe}{U}{bbold}{m}{n}
\newcommand{\2}{\mathbbe{2}}
\newcommand{\3}{\mathbbe{3}}
\newcommand{\iso}{\mathbb{I}}
\newcommand{\cC}{\mathcal{C}}
\newcommand{\cD}{\mathcal{D}}
\newcommand{\cE}{\mathcal{E}}
\newcommand{\cF}{\mathcal{F}}
\newcommand{\cM}{\mathcal{M}}
\newcommand{\cN}{\mathcal{N}}
\newcommand{\cS}{\mathcal{S}}
\newcommand{\cW}{\mathcal{W}}
\newcommand{\DDelta}{\mathbbe{\Delta}}
\newcommand{\we}{\mathscr{W}}
\newcommand{\cof}{\mathscr{C}}
\newcommand{\fib}{\mathscr{F}}
\newcommand{\sF}{\mathscr{F}}
\newcommand{\sL}{\mathscr{L}}
\newcommand{\sR}{\mathscr{R}}
\newcommand{\bT}{\mathbf{T}}
\newcommand{\Adj}{\mathcal{A}\cat{dj}}
\newcommand{\Cat}{\mathcal{C}\cat{at}}
\newcommand{\Set}{\mathcal{S}\cat{et}}
\newcommand{\sSet}{\cat{s}\mathcal{S}\cat{et}}
\newcommand{\Map}{\textup{Map}}
\newcommand{\set}{\cat{set}}
\newcommand{\sset}{\cat{sset}}
\font\maljapanese=dmjhira at 2ex 
\def\yo{\textrm{\maljapanese\char"48}}
\def\makeslashed#1#2#3#4#5{#1{\mathpalette{\sla@{#2}{#3}{#4}}{#5}}}
\def\@mathlower#1#2#3{\setbox0=\hbox{$\m@th#2#3$}\lower#1\ht0\box0}
\def\mathlower#1#2{\mathpalette{\@mathlower{#1}}{#2}}
\newcommand\dhxrightarrow[2][]{%
  \mathrel{\ooalign{$\xrightarrow[#1\mkern4mu]{#2\mkern4mu}$\cr%
  \hidewidth$\rightarrow\mkern4mu$}}
}
\newcommand\tailxrightarrow[2][]{%
  \mathrel{\ooalign{$\xrightarrow[#1\mkern4mu]{#2\mkern4mu}$\cr%
  \hidewidth$\Yright\mkern14mu$}}
}
\newcommand{\fto}{\twoheadrightarrow}
\newcommand{\cto}{\rightarrowtail}
\newcommand{\wto}{\xrightarrow{{\smash{\mathlower{0.8}{\sim}}}}}
\newcommand{\cwto}{\tailxrightarrow{{\smash{\mathlower{0.8}{\sim}}}}}
\newcommand{\fwto}{\dhxrightarrow{{\smash{\mathlower{0.8}{\sim}}}}}
\newcommand{\PSH}{\mathcal{PSH}}
\newcommand{\GPD}{\mathcal{GPD}}
\newcommand{\CAT}{\mathcal{CAT}}
\newcommand{\type}[1]{{\textup{#1}}}
\newcommand{\univ}{\mathcal{U}}
\newcommand{\app}{\textup{app}}
\newcommand{\name}[1]{{\ulcorner{#1}\urcorner}}
\newcommand{\Id}{\textup{Id}}
\newcommand{\refl}{\textup{refl}}
\newcommand{\judgment}{\mathcal{J}}
\newcommand{\Eq}{\mathsf{Eq}}
\newcommand{\Equiv}{\mathsf{Equiv}}
\newcommand{\isEquiv}{\mathsf{isEquiv}}
\newcommand{\isContr}{\mathsf{isContr}}
\newcommand{\map}{\mathsf{map}}
\newcommand{\el}[1]{{\int\!{#1}}}
\begin{document}

\title{On the \texorpdfstring{$\infty$}{infinity}-topos semantics of homotopy type theory}
\author{Emily Riehl}
\date{ \emph{Logique et structures sup\'{e}rieures}, CIRM - Luminy, 21-25 February 2022.}

\thanks{I am grateful to Dimitri Ara, Thierry Coquand, and Samuel Mimram for organizing the workshop that provided the occasion for this lecture series. Specific thanks go to Nathanael Arkor, Steve Awodey, Denis-Charles Cisinski, Daniel Gratzer,  Peter LeFanu Lumsdaine, Kenji Maillard, David Michael Roberts, Mike Shulman, Jon Sterling, Andrew Swan for useful discussions that assisted with the preparation of these notes. This material is based upon work supported by the Swedish Research Council under grant no.~2016-06596 while the author was in residence at Institut Mittag-Leffler in Djursholm, Sweden during January 2022. The author is also grateful to receive support from the National Science Foundation via the grants DMS-1652600 and DMS-2204304 and from the United States Air Force Office of Scientific Research under award number FA9550-21-1-0009. Various expository improvements were noticed during the preparation of a talk on ``Homotopy types as homotopy types'' for A panorama of homotopy theory: a conference in honour of Mike Hopkins. The revised version of the manuscript benefited from the perspicuous suggestions of an anonymous referee.}

\address{Department of Mathematics\\Johns Hopkins University \\ 3400 N Charles Street \\ Baltimore, MD 21218}
\email{eriehl@jhu.edu}

\begin{abstract} 
  Many introductions to \emph{homotopy type theory} and the \emph{univalence axiom} gloss over the semantics of this new formal system in traditional set-based foundations. This expository article, written as lecture notes to accompany a 3-part mini course delivered at the Logic and Higher Structures workshop at CIRM-Luminy,  attempt to survey the state of the art, first presenting Voevodsky's simplicial model of univalent foundations and then touring Shulman's vast generalization, which provides an interpretation of homotopy type theory with strict univalent universes in any $\infty$-topos. As we will explain, this achievement was the product of a community effort to abstract and streamline the original arguments as well as develop new lines of reasoning.
\end{abstract}

\maketitle

\setcounter{tocdepth}{2}
\tableofcontents

\section{Overview}

Homotopy type theory is a new field of mathematics that grew out of recently-discovered connections between type theory, homotopy theory, and higher category theory. As a type-theory, it is a stand-alone foundational formal system that can be used to construct mathematical objects and prove theorems about them.  While classical mathematical foundations has two primitive notions --- \emph{propositions}, which are governed by the rules of predicate logic, and \emph{sets}, which satisfy the Zermelo--Frankel axioms --- in homotopy type theory these are unified by a common primitive notion of \emph{type}.\footnote{In Voevodsky's hierarchy of types, the propositions correspond to $-1$-types while the sets correspond to the 0-types, two of the bottom layers of an unbounded infinite hierachy determined by the complexity of a type's \emph{identity types}.} These foundations are \emph{univalent}, validating the common mathematical practice of identifying objects that are equivalent in the sense appropriate to their context. Homotopy type theory is not merely a foundation for mathematics but can also be treated as a powerful programming language, and in particular is amenable to computer formalization, which is an original motivation of one of the pioneers in the field \cite{voevodsky-origins}.

While there are a number of excellent expository introductions to homotopy type theory and univalent foundations, such as the textbooks \cite{HoTT,rijke} and the expository essays \cite{apw, pelayo-warren, shulman-synthetic, shulman-logic}, these sources say relatively little about the interpretation of this formal system in traditional foundations. The aim of this article is to survey the state of the art, as these semantics establish the relative consistency of this formal system, provide rigorous justification for the homotopical intuitions, and connect theorems proven in homotopy type theory with the rest of mathematics. 

\subsection{On univalent foundations and their semantics}

Homotopy type theory is based on Martin-L\"{o}f's dependent type theory \cite{martin-lof}, a specific form of type theory developed to support constructive mathematics. Martin-L\"{o}f's type theory has two forms, referred to as \emph{intensional} and \emph{extensional}, according to the absence or presence of the \emph{equality reflection rule}, which derives a judgmental equality of terms from a term of the corresponding identity type; see Digression \ref{dig:ext-vs-int}.\footnote{The adjectives ``intensional'' and ``extensional'' describe the behavior of the judgmental equality relation of the type system. Identity types are always extensional, in the sense in which this term is used elsewhere in logic and philosophy, so in extensional type theory, when the relation presented by the identity types is equivalent to the judgmental equality relation, judgmental equality is extensional as well. Unfortunately, within the literature it is common to use the phrase ``intensional identity types'' to refer to identity types without the equality reflection rule and ``extensional identity types'' to refer to identity types with the equality reflection rule, and we follow this convention here.} In its original intensional form, Hofmann and Streicher \cite{HS} and later Gambino and Garner \cite{GG} and Awodey and Warren \cite{AW} discovered that identity types admit ``higher-dimensional'' interpretations, in which the identity relation need not be a mere proposition but may carry additional structure. Building on this observation, van den Berg and Garner \cite{vdBG} and Lumsdaine \cite{lumsdaine} showed that the iterated identity types equip the types of Martin-L\"{o}f's type theory with the structure of weak $\omega$-categories (aka $\infty$-groupoids). 

Contemporaneously, Voevodsky discovered homotopical interpretations for the other type constructors of dependent type theory that were compatible with the interpretation of dependent types as \emph{fibrations}. This lead to the discovery of the \emph{univalence axiom}, which characterized the identity types in the universe of types: univalence asserts that identifications between types are equivalent to equivalences via a canonically-defined map. Since all of the constructions in dependent type theory are invariant under identifications, it follows from univalence that all constructions are invariant under equivalence between types. Here we use ``homotopy type theory'' and ``univalent foundations'' as synonyms for Martin-L\"{o}f's dependent type theory \cite{martin-lof}, with intensional identity types, together with Voevodsky's univalence axiom.\footnote{The books \cite{HoTT,rijke} both include \emph{higher inductive types} within the purview of homotopy type theory. The $\infty$-topos semantics of homotopy type theory also provides semantics for higher inductive types, which we do not discuss here; \cite{lumsdaine-shulman, shulman}.} 

This survey article is divided into three parts. In the first part, we describe the general categorical semantics of dependent type theory. In \S\ref{sec:introduction}, we introduce the syntax for Martin-L\"{o}f's dependent type theory and describe the rules for identity types, which we connect to homotopical lifting properties. In \S\ref{sec:cat-semantics}, we define the \emph{category of contexts} as \emph{contextual category} and describe the additional logical structures required to interpret the various type-forming operations. These seem to present insurmountable coherence problems, which can be solved by Voevodsky's technique of building contextual categories from \emph{universes}.

In the second part, we describe the simplicial model of univalent foundations discovered by Voevodsky \cite{klv}, which justifies the relative consistency of the univalence axiom and the homotopical interpretation of type theory. In \S\ref{sec:universe}, we build a universe in the category of simplicial sets, using a construction of Hofmann and Streicher \cite{HS-lifting}. In \S\ref{sec:simplicial-model}, we extract a universal Kan fibration from the Hofmann--Streicher universe and prove its fibrancy and univalence. These results combine to give an interpretation of homotopy type theory with univalent universes in the $\infty$-topos of spaces. We present relatively few of the original arguments of \cite{klv} in this section, instead substituting streamlined lines of reasoning developed by the homotopy type theory, which are more amenable to generalization. 

In the third part, we describe Shulman's \cite{shulman} vast extension of Voevodsky's theorem, establishing an interpretation of homotopy type theory with univalent universes in an arbitrary $\infty$-topos. In \S\ref{sec:topoi}, we introduce $\infty$-topoi via their presentations by model categories that are \emph{model toposes} in the sense of Rezk \cite{rezk}. In \S\ref{sec:topos-semantics}, we then sketch Shulman's proof, a tour de force argument which builds upon all of the results developed up to this point.

There is a good reason why semantics are typically discussed long after an introduction to homotopy type theory: the subject is both technical and has substantial prerequisites. In addition to fluency in ordinary categorical language, an ideal reader of this note would have some exposure to the category of simplicial sets (see \S\ref{ssec:sset}), Quillen model structures (see \S\ref{ssec:quillen}), and dependent type theory (see \S\ref{ssec:dtt}). To help newcomers, or those in need of a refresher, we give a brief introduction to the $\infty$-topos of spaces, as presented by the category of simplicial sets, before diving into the general categorical semantics.

\subsection{Simplicial sets as ``spaces''}\label{ssec:sset}

\begin{apo} Regrettably, it seems to be impossible to introduce the category of simplicial sets in any expedient way. While it is possible to give concise and comprehensive definitions, I'm not convinced that such a terse introduction is actually helpful to someone who is learning the material for the first time who needs to immediately internalize everything that has been introduced in order to keep up with what follows. For readers who have the time, the books \cite[\S I]{GoerssJardine} or \cite[\S VII.5]{MacLane} or the paper \cite{Friedman} are warmly recommended.\footnote{The short note \cite{riehl-sset} was written when the author was first learning this material for herself.}

  Instead, I plan to treat simplicial sets as a black box (to be explored at more leisure at another time with aid of any of the standard references) and will instead point out what we need to know about them:
  \begin{itemize}
    \item Simplicial sets were introduced to be combinatorial models of topological spaces.
    \item The category of simplicial sets is considerably better behaved then even a convenient category of topological spaces. It is a category of presheaves on a small category $\DDelta$ (of finite non-empty ordinals $[0] \coloneqq \{0\}, [1]\coloneqq \{0,1\}, \ldots$ and order-preserving maps) and as such is a Grothendieck 1-topos, and in particular is locally presentable and locally cartesian closed.
    \item By the coYoneda lemma, each simplicial set is canonically a colimit of the standard simplices $\Delta^0, \Delta^1, \ldots$ these being the representable simplicial sets: $\Delta^n \coloneqq \DDelta(-,[n])$.
  \end{itemize}
\end{apo}

A \textbf{simplicial set} $B$ is a presheaf on the simplex category $\DDelta$. It is conventional to write $B_n$ for the set of $n$-\textbf{simplices} in $B$, the value of the functor $B$ at the object $[n] \in \DDelta$. By the Yoneda lemma, each $b \in B_n$ may be encoded by a map $b \colon \Delta^n \to B$ of simplicial sets. A simplicial set can be thought of as some sort of simplicial complex, built inductively by attaching $n$-simplices along their boundary $\partial\Delta^n \subset \Delta^n$, which is built from simplices in lower dimensions. More generally, any monomorphism $i \colon A \cto B$ of simplicial sets factors canonically as
\[
    \begin{tikzcd}[column sep=2em]
        A \arrow[r, tail] & A \cup_{\sk_0A} \sk_0B \arrow[r, tail] & A \cup_{\sk_1A}\sk_1B \arrow[r, tail] & \cdots \arrow[r, tail] & \colim_n (A \cup_{\sk_n A} \sk_n B) \cong B
    \end{tikzcd}
\]
where the inclusion $A \cup_{\sk_{n-1}A}\sk_{n-1}B \cto A \cup_{\sk_nA}\sk_nB$ is defined as a pushout of a coproduct of maps $\partial\Delta^n \cto \Delta^n$, each of which attaches a \emph{non-degenerate} $n$-simplex of $B$ not already present in $A$ along its boundary.\footnote{This requires the law of excluded middle; here and elsewhere we make use of non-constructive principles.} 

There is a class of ``very surjective'' maps between simplicial sets called \textbf{trivial fibrations} denoted $f \colon X \fwto Y$ that are defined by a right lifting property against the monomorphisms:
\[
    \begin{tikzcd} A \arrow[r, "x"] \arrow[d, "i"', tail] & X \arrow[d, utfibarrow, "f"] & \arrow[d, phantom, "\leftrightsquigarrow"] & \partial\Delta^n \arrow[d, tail] \arrow[r, "x"] & X \arrow[d, utfibarrow, "f"] \\ B \arrow[r, "y"'] \arrow[ur, dashed, "\exists"'] & Y & ~ & \Delta^n \arrow[r, "y"'] \arrow[ur, dashed, "\exists"'] & Y
    \end{tikzcd}
\]
Equivalently, since monomorphisms decompose as ``relative cell complexes''---sequential composites of pushouts of coproducts of the maps $\partial\Delta^n\cto\Delta^n$''---the trivial fibrations are characterized by a right lifting property against the simplex boundary inclusions for all $n \geq 0$.

There is something a bit strange about using simplicial sets as a combinatorial model for spaces, namely that the standard simplices are ``directed''; for instance there is no ``reversal'' map $\rho \colon \Delta^1 \to \Delta^1$ that exchanges the two vertices. While maps $p \colon \Delta^1 \to X$ are thought of as ``paths'' in $X$ they only behave like we would expect---meaning paths can be reversed and composed---if $X$ is a special sort of simplicial set called a \textbf{Kan complex}, which is also characterized by a right lifting property:
\[
    \begin{tikzcd} \Lambda^n_k \arrow[d, utcofarrow] \arrow[r, "x"] & X & & \Lambda^n_k \arrow[d, utcofarrow] \arrow[r, "x"] & X \arrow[d, two heads, "f"] \\ \Delta^n \arrow[ur, dashed, "\exists"'] & & & \Delta^n \arrow[ur, dashed, "\exists"'] \arrow[r, "y"'] & Y
    \end{tikzcd}
\]
More generally, a map of simplicial sets is called a \textbf{Kan fibration} and denoted $f \colon X \fto Y$ if it has the right lifting property against all \textbf{horn inclusions} $n \geq 1$, $0 \leq k \leq n$, the \textbf{horn} $\Lambda^n_k \subset \Delta^n$ being the simplicial subset spanned by all of the codimension 1 simplices except the one opposite vertex $k$.

\subsection{The simplicial model for the homotopy theory of homotopy theory}\label{ssec:quillen}

The classes of maps just introduced provide the category of simplicial sets with a \textbf{model structure} in the sense of Quillen \cite{quillen}. To explain what this means, we require a few definitions.

\begin{defn}\label{defn:wfs} A \textbf{weak factorization system} on a category $\cE$ is given by a pair of classes of maps $(\sL, \sR)$ so that:
    \begin{enumerate}
        \item Every morphism in $\cE$ factors as:
        \[\begin{tikzcd}[row sep=small]
            X \arrow[rr, "f"] \arrow[dr, dashed, "\sL \ni\ell"'] & & Y \\ & E \arrow[ur, dashed, "r \in \sR"']
        \end{tikzcd}
        \]
        \item The classes $\sL$ and $\sR$ are characterized by the left and right lifting properties against one another:
        \[\begin{tikzcd}
            A \arrow[d, "\sL \ni \ell"'] \arrow[r, "x"] & X \arrow[d, "r \in \sR"] \\ B \arrow[r, "y"'] \arrow[ur, dashed, "\exists"'] & Y
        \end{tikzcd}\]
        This means that there exist solutions to lifting problems as above and moreover any map with the left lifting property against $\sR$ lies in $\sL$ and any map with the right lifting property against $\sL$ lies in $\sR$.
    \end{enumerate}
\end{defn}

\begin{ex} For example, the category of simplicial sets has the following weak factorization systems:
    \begin{itemize} 
        \item One whose left class is the monomorphisms and whose right class is the trivial fibrations.
        \item Another whose right class is the fibrations and whose left class is called the \textbf{trivial cofibrations}, for reasons that will become clear below.
    \end{itemize}
Both of these weak factorization systems are \textbf{cofibrantly generated} meaning that there is a set of maps that
\begin{itemize}
    \item characterizes the right class, by the right lifting property, and
    \item generates the left class, by taking coproducts, pushouts, transfinite composites and retracts.
\end{itemize}
Here the monomorphisms and trivial cofibrations are generated respectively by the sets
\[ \{ \partial\Delta^n \cto \Delta^n\}_{n \geq 0} \qquad \{\Lambda^n_k \cwto \Delta^n\}_{n \geq 1, 0 \leq k \leq n}.\]
\end{ex}

As model structure is defined by a pair of interacting weak factorization systems as follows:

\begin{defn}\label{defn:model-structure} A \textbf{model structure}\footnote{This is not Quillen's original definition \cite{quillen}, but rather a simplification observed by Joyal and Tierney \cite[7.7]{JT}.} on a complete and cocomplete category $\cE$ is given by three classes of maps $(\cof,\we, \fib)$ --- the \textbf{cofibrations} ``$\cto$'', the \textbf{weak equivalences} ``$\wto$'', and \textbf{fibrations} ``$\fto$'' --- so that
    \begin{enumerate}
        \item $(\cof,\fib\cap\we)$ and $(\cof\cap\we,\fib)$ define a pair of weak factorizations systems, and
        \item the weak equivalences satisfy the 2-of-3 property: if any two of three of $g$, $f$, and $g \cdot f$ are weak equivalences, so is the third.
    \end{enumerate}
\end{defn}

The classes $\cC \cap \cW$ and $\cF\cap\cW$ are called \textbf{trivial cofibrations} ``$\cwto$'' and \textbf{trivial fibrations} ``$\fwto$'' respectively. An object in $\cE$ is \textbf{cofibrant} just when the map to it from the initial object is a cofibration and \textbf{fibrant} just when the map from it to the terminal object is a fibration.

\begin{ex} Quillen established a model structure on the category of simplicial sets whose cofibrations are the monomorphisms, fibrations are the Kan fibrations, and weak equivalences are the \textbf{weak homotopy equivalences}, maps of simplicial sets whose geometric realizations define weakly homotopically equivalent spaces. All objects are cofibrant, while the fibrant objects are exactly the Kan complexes, highlighted above. This is commonly referred to as the \textbf{Kan--Quillen model structure} to distinguish it from other model structures borne by the category of simplicial sets.
\end{ex}

\begin{rmk}\label{rmk:sm7}
The category of simplicial sets is cartesian closed and its self-enrichment behaves well with respect to the Kan--Quillen model structure. To explain, observe that for any pair of maps $\ell \colon A \to B$ and $r \colon X \to Y$, we may define the \textbf{Leibniz exponential}, the induced morphism to the pullback in the square
\begin{equation}\label{eq:leibniz}
    \begin{tikzcd}
X^B \arrow[drr, bend left, "X^\ell"] \arrow[ddr, bend right, "r^B"'] \arrow[dr, dashed, "\ell\hat{\pitchfork}r" description] \\ & \bullet \arrow[d, dotted] \arrow[r, dotted] \arrow[dr, phantom, "\lrcorner" very near start] & X^A \arrow[d, "r^A"] \\ & Y^B \arrow[r, "Y^\ell"']& Y^A
    \end{tikzcd}
  \end{equation}
When $\ell$ and $r$ belong to the left and right classes of either of the weak factorization systems in the Kan--Quillen model structure, the induced map
\[ 
    \begin{tikzcd} X^B \arrow[r, "\ell\hat{\pitchfork}r"] & Y^B \times_{Y^A} X^A \end{tikzcd}
\]
is ``very surjective,'' i.e., defines a trivial fibration. This can be interpreted as an enrichment of the lifting property of weak factorization systems. Trivial fibrations in simplicial sets necessarily have a section. On vertices, this section provides a function that specifies a solution to any lifting problem formed by a commutative square from $\ell$ to $r$, but since this function extends to a morphism of simplicial sets it can be thought of as defining a ``continuous choice of lifts.''

In addition, if $\ell$ is a cofibration and $r$ is a fibration this map is a fibration, supplying a further stability property to the class of Kan fibrations than guaranteed by the unenriched lifting property.
\end{rmk}

These properties make the Kan--Quillen model structure into a \emph{simplicial model category}. 

\begin{defn}\label{defn:simp-model-cat} 
  A category $\cE$ that is enriched, tensored, and cotensored over simplicial sets is a \textbf{simplicial model category} when it admits a model structure and its simplicial cotensor interacts with the Kan--Quillen model structure on simplicial sets in the way described here: for any cofibration $\ell \colon A \cto B$ of simplicial sets and fibration $r \colon X \fto Y$ in $\cE$, the \textbf{Leibniz cotensor} \eqref{eq:leibniz} is a fibration in $\cE$ that is a weak equivalence if either $\ell$ or $r$ are weak equivalences.
\end{defn}

\part*{Lecture I: A categorical semantics of dependent type theory}

\section{Introduction}\label{sec:introduction}

\subsection{Why simplicial sets might model homotopy type theory}\label{ssec:why-simplicial-sets}

For any Kan complex $A$, there is a natural \textbf{path space factorization} of the diagonal map defined by exponentiation with the simplicial interval
\[ 
    \begin{tikzcd} \Delta^0+\Delta^0 \arrow[r, tail] & \Delta^1 \arrow[r, we] & \Delta^0 & \rightsquigarrow & A \arrow[r, utcofarrow, "r"] & A^{\Delta^1} \arrow[r, two heads, "{(e_0,e_1)}"] & A \times A\,.
    \end{tikzcd}
\]
The right map, which evaluates a path at its endpoints, is a Kan fibration, while the left map, the inclusion of constant paths, is a trivial cofibration in Quillen's model structure, and as such has the right lifting property with respect to an arbitrary Kan fibration:
\[
    \begin{tikzcd} A \arrow[d, tcofarrow, "r"'] \arrow[r, "e"] & E \arrow[d, two heads, "p"] & \arrow[d, phantom, "\leftrightsquigarrow"] & A \arrow[rr, bend left, "e"]\arrow[d, tcofarrow, "r"'] \arrow[r, "d"] & P \arrow[d, two heads] \arrow[r] \arrow[dr, phantom, "\lrcorner" very near start] & E \arrow[d, two heads, "p"] \\ A^{\Delta^1} \arrow[r, "b"'] \arrow[ur, dashed] \arrow[ur, dashed] & B & ~ & A^{\Delta^1} \arrow[r, equals] \arrow[ur, dashed, "J"'] & A^{\Delta^1} \arrow[r, "b"'] & B
    \end{tikzcd}        
\]
In particular, by pulling back along the codomain $b$ it suffices to consider right lifting problems against Kan fibrations over the path space $A^{\Delta^1}$, as displayed in the left-hand square of the rectangle above-right. This is the semantic interpretation of the homotopy type theoretic principle of \textbf{path induction}: given a type family $x : A, y : A, p : x=_A y \vdash P(x,y,p)$ and a family of terms $a :A \vdash d(a) : P(a,a, r_a)$ over the constant paths, there exists a section \[x : A, y : A, p : x=_A y \vdash J_d(x,y,p): P(x,y,p)\] such that $a :A \vdash J_d(a,a,r_a) \equiv d(a) : P(a,a, r_a)$.

The homotopy type theoretic principle of path induction is stronger than this, because $A$ might be a dependent type $\Gamma \vdash A$ in an arbitrary context $\Gamma$. But we can perform an analogous construction starting from an arbitrary Kan fibration $p \colon \Gamma.A \fto \Gamma$. First form the path space factorizations for both $\Gamma.A$ and $\Gamma$
\begin{equation}\label{eq:relative-path-space-factorization}
    \begin{tikzcd}[sep=tiny]
        \Gamma.A \arrow[rrr] \arrow[dddd, two heads] \arrow[dr, tcofdashed] & & & (\Gamma.A)^{\Delta^1} \arrow[rrr] \arrow[dddd, two heads] & & & \Gamma.A \times \Gamma.A \arrow[dddd, two heads] \\ &  \Delta^1 \pitchfork_\Gamma (\Gamma.A) \arrow[dddl, two heads, dotted] \arrow[dddr, phantom, "\lrcorner" very near start]\arrow[urr, dotted] \arrow[dr, two heads, dashed ]
        \\ & & (\Gamma.A) \times_\Gamma (\Gamma.A) \arrow[ddll, two heads, dotted] \arrow[uurrrr, dotted]  \arrow[ddr, phantom, "\lrcorner" very near start]\\ ~
        \\ \Gamma \arrow[rrr] & & ~ & \Gamma^{\Delta^1} \arrow[rrr] & & & \Gamma \times \Gamma
    \end{tikzcd}
  \end{equation}
and then pull back the top factorization so that it lies in the slice over $\Gamma$. This constructs a factorization of the fibered diagonal, in the slice over $\Gamma$, using the cotensor with the simplicial interval in the slice over $\Gamma$. A point in the fibered path space $\Delta^1 \pitchfork_\Gamma (\Gamma.A)$ is a path in $\Gamma.A$ that lies over a constant path in $\Gamma$.  Once more, the fibered endpoint evaluation is a Kan fibration, while the fibered inclusion of constant paths is a trivial cofibration, so we have a lifting property exactly as above.\footnote{This follows the properties axiomatized in Definition \ref{defn:simp-model-cat} and the 2-of-3 property of the weak equivalences. Note if $\Gamma$ is not a complex, the non-fibered path spaces factorizations for $\Gamma$ and $\Gamma.A$ need not involve trivial cofibrations and fibrations.} Moreover, since pullback is a simplicially enriched right adjoint, this construction is stable under pullback along any simplicial map $f \colon \Delta \to \Gamma$.
\[
\begin{tikzcd}[sep=small]  \Delta.f^*A \arrow[dr, tcofarrow] \arrow[ddddrr, phantom, "\lrcorner" pos=.05] \arrow[ddddr, two heads] \arrow[rrr, dotted] & & &[-35pt] \Gamma.A \arrow[dr, tcofarrow] \arrow[ddddr, two heads]\\ & \Delta^1 \pitchfork_\Delta (\Delta.f^*A) \arrow[dr, two heads] \arrow[ddd, two heads] \arrow[rrr,dotted] \arrow[dddr, phantom, "\lrcorner" very near start] & & & \Delta^1 \pitchfork_\Gamma (\Gamma.A) \arrow[dr, two heads] \arrow[ddd, two heads]\\ & & (\Delta.f^*A) \times_\Delta (\Delta.f^*A) \arrow[ddl, two heads] \arrow[rrr, dotted] \arrow[ddr, phantom, "\lrcorner" very near start]& & & (\Gamma.A) \times_\Gamma (\Gamma.A) \arrow[ddl, two heads] \\ ~ \\ & \Delta \arrow[rrr, dotted, "f"'] &~ & ~& \Gamma
\end{tikzcd}    
\]
Type-theoretic constructions are necessarily ``stable under substitution,'' where ``substitutions'' may be thought of as context morphisms; see Definition \ref{defn:cat-of-contexts}. Thus, a central requirement of their categorical interpretations is stability under pullback in a sense to be discussed in more detail in \S\ref{ssec:cc-universe}.

\subsection{Martin-L\"{o}f's Dependent Type Theory}\label{ssec:dtt}

We now introduce the syntax and structure rules of Martin-L\"{o}f's dependent type theory, which was referenced informally above. This type theory has four basic judgment forms:

\[ \Gamma \vdash A \qquad \Gamma \vdash a : A \qquad \Gamma \vdash A \equiv B \qquad \Gamma \vdash a \equiv b : A\]
asserting that $A$ is a \textbf{type}, $a$ is a \textbf{term} of type $A$, and that $A$ and $B$ or $a$ and $b$ are \textbf{judgmentally equal} types or terms, all in context $\Gamma$. Here $\Gamma$ is shorthand for an arbitrary \textbf{context}, given by a sequence of variables of previously-defined types. Some types will be defined in the empty-context
\[ \cdot \vdash \NN \]
Such types then form contexts of length one $[n : \NN]$. A type that is defined in a context of length one
\[ n : \NN \vdash \RR^n \]
can then be used to \textbf{extend} the context to a context of length two $[n : \NN, v : \RR^n]$. Types definable using any subset of the previously-defined variables
\[ n : \NN, v : \RR^n \vdash T_v\RR^n\]
may then extend the context further $[n : \NN, v : \RR^n, \ell : T_v\RR^n]$. Fully elaborated, the first two judgments take the form
\begin{align*} x_1 :A_1, x_2 : A_1(x_1), \ldots, x_n : A_n(x_1,\ldots, x_{n-1}) & \vdash B(x_1,\ldots, x_n) \\
  x_1 :A_1, x_2 : A_1(x_1), \ldots, x_n : A_n(x_1,\ldots, x_{n-1}) & \vdash b(x_1,\ldots, x_n) : B(x_1,\ldots, x_n)
\end{align*}
We may think of such judgments as functions that take as input a sequence of typed variables and produce either a type or a term as output. 

The \textbf{structural rules} of dependent type theory govern the variables and the judgmental equality relations. For instance, the \textbf{substitution rule} takes the form
\[
\inferrule{ \Gamma \vdash a : A \\ \Gamma, x : A, \Delta \vdash \judgment }{\Gamma, \Delta[a/x] \vdash \judgment[a/x]}\]
where $\judgment$ can be the conclusion of any of the four judgment forms. This says that given any term $a : A$ defined in context $\Gamma$ and any judgment whose context involves a variable $x: A$, there is a corresponding judgment that no longer involves a variable $x : A$ but instead substitutes the term $a$ for $x$ whenever that variable occurred. 

The \textbf{variable rule} asserts the existence of projection functions
\[
    \inferrule{~}{\Gamma, x : A, \Delta \vdash x : A}
\]
that take any variable occurring in a context and providing it as a term of the corresponding type.

This base type theory then inherits a richer structure on account of various additional \textbf{logical rules} that can be used to form new types. These are called ``logical rules'' because the types themselves are used to encode mathematical statements, which are ``grammatically correct'' but are not necessarily provable or true. To prove such a statement, one the constructs a term of the corresponding type by iteratively applying rules.

For instance, given a context $\Gamma, x :A \vdash B(x)$ there is a \textbf{dependent pair type}
 and a \textbf{dependent function type} 
 which in this case have the same \textbf{formation rules}:
\[
\inferrule{\Gamma, x :A \vdash B(x)}
{ \Gamma \vdash \Sigma_{x:A}B(x)}
\qquad \qquad
\inferrule{\Gamma, x:A \vdash B(x)}
{\Gamma \vdash \Pi_{x:A}B(x)}
\]
These types are distinguished by their \textbf{introduction rules}, which provide terms:
\[
\inferrule{\Gamma, x : A \vdash B(x) \\ \Gamma \vdash a : A \\ \Gamma \vdash b : B(a)}
{\Gamma \vdash (a,b) : \Sigma_{x:A} B(x)}
\qquad\qquad
\inferrule{\Gamma, x :A \vdash B(x) \\ \Gamma, x :A \vdash b(x) : B(x)}
{\Gamma \vdash \lambda x.b(x) : \Pi_{x:A} B(x)}
\]
 The \textbf{elimination rules} explain how generic terms $z : \Sigma_{x:A} B(x)$ and $f : \Pi_{x:A} B(x)$ can be used, 
while the \textbf{computation rules} describe the composites of the functions encoded by the introduction and elimination rules. See \cite[\S A.2]{HoTT} for more.

\subsection{Identity types}

Of the various logical rules that animate dependent type theory, Martin-L\"{o}f's rules for identity types are of paramount importance.

The \textbf{formation rule}
\[
\inferrule{~}{\Gamma, x : A, y : A \vdash x=_A y} 
\]
says that it is meaningful to inquire whether two terms belonging to a common type may be identifiable. The \textbf{introduction rule}
\[
\inferrule{~}{\Gamma, a : A \vdash \refl_a : a =_A a}
\]
guarantees that any term is automatically identifiable with itself, via ``reflexivity.'' 

The \textbf{elimination rule} provides a version of Leibniz' indiscernability of identicals:
\[
\inferrule{\Gamma, x : A, y : A, p : x =_A y \vdash P(x,y,p) \\\Gamma, a : A \vdash d(a) : P(a,a,\refl_a) }
{\Gamma, x : A, y :A, p : x=_A y \vdash J_d(x,y,p) : P(x,y,p)}
\]
Here $P$ can be be thought of as a predicate involving two terms of the same type and an identification between them, though the power of this rule derives from its applicability to arbitrary type families, not only the mere propositions. In the homotopy theoretic interpretation previewed in \S\ref{ssec:why-simplicial-sets}, the elimination rule for identity types is nicknamed \emph{path induction}. See \cite{HoTT,rijke} for a survey of its myriad consequences.

Finally, the \textbf{computation rule} 
\[
\inferrule{\Gamma, x : A, y : A, p : x =_A y \vdash P(x,y,p) \\\Gamma, a : A \vdash d(a) : P(a,a,\refl_a) }
{\Gamma, a :A \vdash J_d(a,a,\refl_a) \equiv d(a): P(a,a,\refl_a)}
\]
is analogous to the computation rule for a recursively defined function.\footnote{Identity types and the type of natural numbers are both \emph{inductive types}, freely generated by the terms provided by their introduction rules. See \cite[\S I.3-5]{rijke}.}

\begin{dig}[extensionality vs intensionality]\label{dig:ext-vs-int}
By the structural rules for judgmental equality, if 
\[ \Gamma, x :A, y : A \vdash x \equiv y : A\]
then
\[ \Gamma, x : A, y : A \vdash \refl_x : x =_A y\]
Thus judgmentally equal terms are always identifiable.

The \textbf{extensional} form of Martin-L\"{o}f's identity types adds a converse implication:
\[ \Gamma, x :A , y : A , p: x =_A y \vdash x \equiv y : A\]
while the \textbf{intensional} form of Martin-L\"{o}f's identity types does not add this implication. 

While the identity types in homotopy type theory are intensional, the early models of Martin-L\"{o}f's identity types were extensional. The first intensional model was given by Hofmann and Streicher in the category of groupoids \cite{HS}, a breakthrough that marked a key step on the way to the development of homotopy type theory.
\end{dig}

We now explain what it means for a category to model type theory.

\section{A categorical semantics of dependent type theory}\label{sec:cat-semantics}

\subsection{The category of contexts}

To interpret syntactic expressions in type theory as data in a category we must first build a category out of type theoretic syntax. A standard way to do this involves Cartmell's \textbf{contextual categories} \cite{cartmell}.

\begin{defn}\label{defn:cat-of-contexts} Let $\bT$ be a type theory with the standard structural rules: variable, substitution, weakening, and the usual rules concerning judgmental equality of types and terms. Then the \textbf{category of contexts} $\cC(\bT)$ is a category in which:
  \begin{itemize}
    \item objects are contexts $[x_1:A_1,\ldots, x_n:A_n]$ of arbitrary finite length up to definitional equality and renaming of free variables;
    \item maps of $\cC(\bT)$ are \textbf{context morphisms} or \textbf{substitutions}, considered up to definitional equality and renaming of variables: a map 
    \begin{equation}\label{eq:context-morphism} f = [f_1,\ldots,f_n] \colon  [y_1: B_1, \ldots , y_m :B_m] \to [x_1:A_1,\ldots, x_n :A_n]\end{equation} is an equivalence class of term judgments
    \begin{align*} y_1: B_1, \ldots, y_m : B_m &\vdash f_1 : A_1 \\ \vdots \qquad &\vdash \qquad \vdots \\ y_1: B_1, \ldots, y_m : B_m &\vdash f_n : A_n(f_1,\ldots, f_{n-1})
    \end{align*}
    \item composition is given by substitution and the identity is defined by using variables as terms.
  \end{itemize}
The category of contexts has the following additional structures:
  \begin{itemize}
    \item $\cC(\bT)$ has a terminal object  corresponding to the empty context.
    \item The objects are partitioned  according to the length of the context
    \[\ob\,\cC(\bT) = \aamalg_{n :\NN} \ob_n\cC(\bT)\] with the empty context being the only object of length 0.
    \item Each object of positive length has a \textbf{display map} 
    \[ [x_1,\ldots, x_n] \colon [x_1 : A_1, \ldots, x_{n+1}: A_{n+1}] \to [x_1 :A_1,\ldots, x_n : A_n] \]
    namely the canonical projection away from the final type in the context. Writing $\Gamma$ for the codomain context, we abbreviate this display map as 
    \[p_{A_{n+1}} \colon \Gamma.A_{n+1} \to \Gamma\] and refer the domain of any display map over $\Gamma$ as a \textbf{context extension} of $\Gamma$. We refer to composites 
    \[ \begin{tikzcd} \Gamma. A.B \arrow[r, "p_B"] & \Gamma.A \arrow[r, "p_A"] & \Gamma & \text{or} & \Gamma.\Delta \arrow[r, "p"] & \Gamma\end{tikzcd}\] of display maps as \textbf{dependent projections}.
    \item For each substitution \eqref{eq:context-morphism} and any context extension $\Gamma .A$ of its codomain $\Gamma$, there is a \textbf{canonical pullback} defined by the context 
    \[[y_1 : B_1,\ldots , y_m : B_m, y_{m+1} : A(f_1(\vec{y}), \ldots, f_n(\vec{y}))]\] together with a canonical pullback square 
    \[ \begin{tikzcd} {\Delta.f^*A} \arrow[d, "p_{f^*A}"'] \arrow[dr, phantom, "\lrcorner" very near start]\arrow[r, "q(f)"] & {\Gamma.A} \arrow[d, "p_A"] \\ \Delta \arrow[r, "f"'] & \Gamma\end{tikzcd}
    \] 
    Moreover, these canonical pullbacks are \emph{strictly stable under substitution}. Given any context morphism $g \colon \Theta \to \Delta$, $\Theta.g^*f^*A = \Theta.(f\cdot g)^*A$ and $q_f \cdot q_g = q_{f \cdot g}$; similarly $\Gamma.1^*A = \Gamma.A$.\footnote{Note the express \emph{equalities} of objects, not just isomorphisms.}
  \end{itemize}
\end{defn}

A \textbf{contextual category} is a category with the structures just enumerated: a terminal object, the natural number lengths, the specified display maps, and chosen canonical pullbacks that are strictly functorial.

\begin{rmk} A term $\Gamma \vdash a : A$ in a type $A$ in context $\Gamma$ is given by a section to the display map $p_A \colon \Gamma.A \to \Gamma$. Thus, in a contextual category, the phrase ``section'' is typically reserved to mean section of a display map.
\end{rmk}

\subsection{Homotopy theoretic models of identity types}

The categorical semantics of intensional identity types were discovered contemporaneously by Awodey--Warren \cite{AW} and Gambino--Garner \cite{GG}. The title of this section alludes to the title of the former's paper, while our presentation focuses on the latter's result, constructing what Gambino and Garner call the \textbf{identity type weak factorization system} on the category of contexts.

\begin{thm}[{\cite{GG}}] Let $\bT$ be a dependent type theory with intensional identity types. Then the category of contexts $\cC(\bT)$ admits a weak factorization system whose left class is comprised of those maps that lift against the display maps.
\end{thm}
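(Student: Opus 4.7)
The plan is to define $\sR := \sL^\pitchfork$ (the class of maps with the RLP against $\sL$) and verify the two axioms of a weak factorization system (Definition~\ref{defn:wfs}). First I would observe that the lifting property axiom comes essentially for free: $\sL$ is defined by the LLP against the display maps, so every display map (and by closure of lifting classes under pullback and composition, every dependent projection) lies in $\sR$, while $\sR$ is characterized by the RLP against $\sL$ by construction. The real content is the factorization axiom.

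The heart of the argument is the identity type path object construction. For each type judgment $\Gamma \vdash A$, one obtains a canonical factorization of the display map
\[ \Gamma.A \xrightarrow{r_A} \Gamma.A.A.\mathsf{Id}_A \xrightarrow{\pi} \Gamma \]
whose first factor $r_A$ is the section determined by the reflexivity rule $a : A \vdash \refl_a : a =_A a$ and whose second factor $\pi$ is a composite of three display maps, hence a dependent projection in $\sR$. The crucial claim is that $r_A \in \sL$. To prove this, I would take a lifting problem pairing $r_A$ against an arbitrary display map $p_B \colon \Sigma.B \to \Sigma$, pull back $p_B$ along the lower edge of the square to reduce to the case of extending a section defined along $r_A$ to a section of the pulled-back display map over $\Gamma.A.A.\mathsf{Id}_A$, and then invoke the elimination rule $J$: the pulled-back type family together with the given partial section are exactly the premises of path induction, so $J$ produces the required extension, with the computation rule ensuring definitional commutativity of the upper triangle and the pullback property giving the lower one.

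To extend this from display maps to arbitrary morphisms of $\cC(\bT)$, I would invoke Garner's algebraic small object argument applied to the set of reflexivity terms $\{r_A\}$ ranging over all type judgments. This produces a functorial factorization whose right factor has the RLP against every $r_A$ and whose left factor is a retract of a transfinite composite of pushouts of reflexivity terms. Standard closure properties of lifting classes, together with the preceding paragraph, show that the right factor lies in $\sR$ and the left in $\sL$. The hardest part will be reconciling this small-object-theoretic machinery with the strict pullback structure of a contextual category --- verifying that the generating class is genuinely set-sized and that the resulting factorization plays well with substitution --- though both points are tractable thanks to the rigid bookkeeping of canonical pullbacks enforced by the structural rules.
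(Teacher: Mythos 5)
Your identification of the key lemma — that the reflexivity map into the identity context lifts against display maps, proven by pulling the codomain back and invoking the $J$-rule with its computation rule — is essentially the heart of the Gambino--Garner argument, and it is the main idea that the paper's exposition highlights (there the example is the factorization of $\id_{[a:A]}$ through $[x:A, y:A, p: x=_A y]$, a minor variation of your factorization of $p_A$).

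However, the step from factoring display maps to factoring arbitrary context morphisms is where your proposal breaks down. The appeal to Garner's algebraic small object argument is not available here: that machinery produces a factorization by transfinite composites of pushouts of coproducts of the generating maps, and the category of contexts $\cC(\bT)$ of a Martin-L\"{o}f type theory is not even finitely cocomplete, much less equipped with the transfinite colimits the argument requires. (It is a syntactic category with a terminal object and chosen pullbacks of display maps, but nothing in the structural rules gives you coproducts or pushouts.) This is precisely why Gambino and Garner do not proceed via the small object argument; as the paper notes, they instead \emph{explicitly construct} the factorization of an arbitrary context morphism $f \colon \Delta \to \Gamma$, building a mapping-path-space-style context over $\Gamma$ using iterated identity types in a way that handles the dependencies among the types in $\Gamma$. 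The concern you flag at the end --- that the small-object machinery might clash with the strict pullback structure --- is a red herring; the real obstruction is simply the absence of colimits. To repair the proof you would need to replace the small object argument with a direct syntactic construction of the factorization, after which your observations about the reflexivity maps and the $J$-rule carry the argument through.
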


In the proof, Gambino and Garner construct the factorization of an arbitrary context morphism, but the main idea is illustrated effectively by the simplest non-trivial case: factoring the identity map on a context $[x :A]$ of length 1 as
\[ \begin{tikzcd}[row sep=small] {[a : A]} \arrow[rr, equals] \arrow[dr, "{[a,a,\refl_a]}"'] & & {[a : A]}  \\ &{[x : A, y : A, p: x =_A y]} \arrow[ur, "{[y]}"'] 
\end{tikzcd}
\]
The right factor is isomorphic in the category of contexts to a composite of display maps and hence belongs to the right class. To see that the left factor is in the left class, we must show it has the left lifting property with respect to an arbitrary display map $p_P \colon \Phi.P \to \Phi$. By pulling back along the codomain of the lifting problem, we may assume that the codomain of this display map is the context $[x:A, y:A, p: x=_A y]$ presenting us with a lifting problem of the form below:
\[
    \begin{tikzcd} 
         {[a : A]} \arrow[d, "{[a, a, \refl_a]}"'] \arrow[r, "{[x, y, p, d(a)]}"] & {[x : A, y:A, p : x=_A y, z : P(x,y,p)]} \arrow[d, "p_P"] \\ 
         {[x : A, y :A, p:x=_A y]} \arrow[r, equals] \arrow[ur, dashed, "{[x,y,p, J_d(x,y,p)]}"'] & {[x : A, y :A, p:x=_A y]} 
    \end{tikzcd}        
\]
By the elimination rule for identity types, we can use the data provided by the lifting problem to define the term 
\[ x : A, y : A, p: x =_A y \vdash J_d(x,y,p) : P(x,y,p)\]
which defines a section of the display map associated to the type family \[ x :A, y :A, p: x=_A y \vdash P(x,y,p),\]
making the bottom triangle commute. The top triangle commutes by the computation rule.

\begin{rmk} Unusually, this factorization is not \emph{functorial}. Given a commutative rectangle involving the identity functions between three contexts of length one:
    \[
        \begin{tikzcd}
            {[a : A]} \arrow[r, "g"] \arrow[d, equals] & {[b: B]}\arrow[r, "f"] \arrow[d, equals] & {[c:C]} \arrow[d, equals] \\ {[a : A]} \arrow[r, "g"] & {[b: B]}\arrow[r, "f"] & {[c:C]}
        \end{tikzcd}
    \]
functoriality would demand that the induced substitutions between identity types commute. But as the maps induced by $f$, $g$, and $f \cdot g$ are defined by path induction (i.e., by the elimination rule of identity types) they only commute up to pointwise identification, not up to judgmental equality.
\end{rmk}

Despite this failure of functoriality, nevertheless, as these structures on the category of contexts are defined using the rules of type theory, they are strictly stable under substitution. The central components of the identity type weak factorization system equip the contextual category $\cC(\bT)$ with what is called an ``identity-type structure.''

\begin{defn}\label{defn:id-structure} An \textbf{identity type structure} on a contextual category consists of:
\begin{itemize}
\item for each object $\Gamma.A$, an object $\Gamma.A.A.\Id_A$ extending the context defined by the canonical pullback:
\[ 
\begin{tikzcd} \Gamma.A.A \arrow[r] \arrow[d] \arrow[dr, phantom, "\lrcorner" very near start] & \Gamma.A \arrow[d, "p_A"] \\ \Gamma.A \arrow[r, "p_A"'] & \Gamma
\end{tikzcd}
\]
\item for each $\Gamma.A$, a morphism
\[
\begin{tikzcd} & \Gamma.A.A.\Id_A \arrow[d, "p_{\Id_A}"] \\ \Gamma.A \arrow[ur, "\refl_A", dashed] \arrow[r, "{(1,1)}"'] & \Gamma.A.A
\end{tikzcd}
\]
lifting the fibered diagonal; and
\item for each $\Gamma.A.A.\Id_A.P$  and solid-arrow commutative diagram
\[
    \begin{tikzcd} 
         \Gamma.A \arrow[d, "{\refl_A}"'] \arrow[r, "{d}"] & \Gamma.A.A.\Id_A.P \arrow[d, "p_P"] \\ 
\Gamma.A.A.\Id_A \arrow[r, equals] \arrow[ur, dashed, "{J}"'] & \Gamma.A.A.\Id_A
    \end{tikzcd}        
\]
a section $J$ of $p_P$ so that $J \cdot \refl_A = d$
\end{itemize}
so that all this structure is strictly stable under pullback along any substitution $f \colon \Delta \to \Gamma$.
\end{defn}
  
\subsection{Initiality}

A \textbf{contextual functor} is a functor between contextual categories preserving all of the structure on the nose. In particular, if the contextual categories have additional logical structure, such as the identity type structure just defined, or the corresponding structures for $\Sigma$- or $\Pi$-types, then the contextual functor must preserve this as well, up to equality.

The category of contexts is intended to be the universal contextual category built from a type theory in the sense made precise by the following conjecture:

\begin{conj}[initiality conjecture] Let $\bT$ be a type theory given by the standard structural rules together with some collection of the logical rules alluded to above. Then the category of contexts $\cC(\bT)$ is the initial contextual category with the corresponding extra structure. 
\end{conj}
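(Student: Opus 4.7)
The plan is to construct, for an arbitrary contextual category $\cD$ carrying all of the type-theoretic structures corresponding to the logical rules of $\bT$, a unique contextual functor $\llbracket - \rrbracket \colon \cC(\bT) \to \cD$ preserving those structures. The construction proceeds by a large mutual induction on the derivations of the four judgment forms of $\bT$. First I would define a partial interpretation function on raw pre-syntax: raw contexts $[x_1:A_1,\ldots,x_n:A_n]$ become iterated context extensions in $\cD$, raw type expressions $\Gamma \vdash A$ become display maps over $\llbracket \Gamma \rrbracket$, raw terms $\Gamma \vdash a : A$ become sections of $\llbracket A \rrbracket$, and each logical rule of $\bT$ is interpreted via the corresponding structure on $\cD$ (for instance the identity-type rules via the identity type structure of Definition~\ref{defn:id-structure}, and similarly for $\Pi$-, $\Sigma$-, and any other structures included in $\bT$). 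One then shows, by induction on derivations, that whenever a judgment is derivable its interpretation is defined, and that judgmentally equal types and terms receive equal interpretations.

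The critical technical ingredient is a \emph{substitution lemma}, which asserts that the interpretation commutes strictly with syntactic substitution: for $\Gamma \vdash a : A$ and $\Gamma, x:A, \Delta \vdash \judgment$, the interpretation of $\judgment[a/x]$ must equal, on the nose, the canonical pullback of $\llbracket \judgment \rrbracket$ along $\llbracket a \rrbracket$. This lemma must be folded into the big mutual induction, because it is needed already to make the variable and weakening rules well-defined. Strict stability of the canonical pullbacks in a contextual category, together with the explicit demand that each logical structure be strictly substitution-stable, is precisely what makes this possible. Once the interpretation on raw syntax is shown to respect all four judgmental equalities and renaming of bound variables, it descends to a functor out of the quotient category $\cC(\bT)$, which by construction preserves the contextual structure and each specified piece of additional logical structure. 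Uniqueness is then straightforward: any structure-preserving contextual functor out of $\cC(\bT)$ is forced by the structure on $\cD$ to interpret each type and term former in exactly one way, and hence agrees with $\llbracket - \rrbracket$ by induction on context length and syntactic complexity.

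The hard part, and the reason the statement persists as a conjecture rather than an established theorem, is the sheer combinatorial bureaucracy of this mutual induction: one must juggle all four judgment forms, their presuppositions, the substitution lemma, and strict stability for every logical rule simultaneously, while also verifying that the interpretation descends through the quotient defining $\cC(\bT)$ and through the on-the-nose equalities imposed on $\cD$ for each distinct type former. The main obstacle is therefore not conceptual but one of discipline, namely carrying through every case of an enormous induction without silently conflating coherence issues such as the ordering of hypotheses, $\alpha$-equivalence of bound variables, and the interaction of substitution with the canonical pullbacks in $\cD$.
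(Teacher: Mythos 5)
This statement is a \emph{conjecture} in the paper, not a theorem: the paper supplies no proof, and indeed its surrounding discussion is devoted to explaining why the conjecture is hard, how it was eventually verified for the specific ``Book-HoTT'' type theory in de Boer's 2020 thesis, and why many practitioners sidestep it by \emph{defining} a type theory to be a suitably structured category in the first place. So there is no ``paper's proof'' against which to measure your argument.

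Your sketch is, as far as it goes, the right plan --- it is essentially Streicher's strategy and the one de Boer formalized: define a partial interpretation on raw pre-syntax, prove a substitution lemma simultaneously with well-definedness by a large mutual induction on derivations, show judgmental equality is respected, and then descend to the quotient. You are right that the partial-interpretation-on-raw-syntax device is essential (rather than naive induction on derivations), precisely because a given judgment may admit multiple derivations and one must show they produce the same interpretation; you gesture at this but could state it more explicitly, since it is one of the complications the paper quotes Kapulkin--Lumsdaine as emphasizing. You also correctly identify that the difficulty is largely one of disciplined bookkeeping rather than a missing conceptual idea.

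However, what you have written is a proof \emph{plan}, not a proof, and --- to your credit --- you say so. Two further caveats are worth flagging. First, the conjecture as stated is deliberately vague about what ``a type theory given by the standard structural rules together with some collection of logical rules'' means; your plan presupposes a fixed, precisely specified type theory with a matching catalogue of structures on contextual categories, so it addresses instances of the conjecture rather than the conjecture in its schematic generality. The paper explicitly notes that pinning down a definition of ``type theory'' general enough to cover the desired examples is itself an open problem. Second, ``uniqueness is then straightforward'' undersells the work: uniqueness requires its own induction over raw syntax showing that any two structure-preserving contextual functors agree, and this induction carries the same bureaucratic burden (multiple derivations, $\alpha$-renaming, substitution commuting with canonical pullbacks) as the existence argument. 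So the proposal is a correct outline of the standard route, but it leaves the entire body of the argument --- which is exactly why this remains a conjecture in the paper --- unexecuted.
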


Then further models of $\bT$ in a category $\cE$ can be defined by turning $\cE$ into a contextual category with the appropriate logical structure: by initiality, this would induce a unique functor $\cC(\bT) \to \cE$ providing an interpretation of the syntax of type theory in the category $\cE$.

The difficulties with the initiality conjecture are multifaceted. On the one hand:
\begin{quote}
  The trouble with syntax is that it is very tricky to handle rigorously. Any full presentation must account for (among other complications) variable binding, capture-free substitution, and the possibility of multiple derivations of a judgment; and so any careful construction of an interpretation must deal with all of these, at the same time as tackling the details of the particular model in question. Contextual categories, by contrast, are a purely algebraic notion, with no such subtleties. \cite[p.~2075]{klv}.
\end{quote}
Another challenge is that there are many versions of type theory, determined by various collections of rules. This makes it difficult to give a precise definition of what constitutes a type theory that is sufficiently general to cover all of the desired examples. An early version of initiality was proven for a type theory called the \emph{calculus of constructions} by Streicher \cite{streicher-initiality}. This has often been cited as evidence that initiality should be true for a much broader class of type theories, but at the time the simplicial ``model'' of homotopy type theory was being developed, Voevodsky argued that it was  unacceptably un-rigorous to assume initiality without proof.

Since \cite{klv} was written, there was have been efforts to at least prove initiality for the ``Book-HoTT'' version of homotopy type theory appearing in \cite{HoTT}. This was achieved in a 2020 licentiate thesis of Menno de Boer \cite{de-boer} based on parallel formalization efforts undertaken by  Brunerie and de Boer in Agda and Lumsdaine and M\"{o}rtberg in Coq. 

Alternatively, the question of what constitutes a type theory can be answered in such a way as to render initiality automatic. Many practitioners equate type theories with their corresponding suitably-structured categories, whether these be \emph{contextual categories} \cite{cartmell}, or \emph{comprehension categories} \cite{jacobs}, or \emph{categories with families} \cite{dybjer}, or \emph{natural models} \cite{awodey} or something else. Each of these notions defines an \emph{essentially algebraic theory} and thus the category of all such admits an initial object. From that point of view, the only question is whether a particular syntax presents the initial object, and this may be viewed as a practical consideration more so than a theoretic one as the utility of a particular syntactic presentation derives from its use in proofs.

\subsection{Contextual categories from universes}\label{ssec:cc-universe}

Even assuming initiality, it seems daunting to construct a model of type theory in a category $\cE$, for one must still define a contextual category with suitable structures for all the type forming operations.   By analyzing the type-forming operations, one can predict what sort of category might be suitable. For instance, a $\Pi$-type structure on a contextual category requires an operation that takes a composable pair of display maps as below-left to a display map as below-right:
\[ 
\begin{tikzcd} \Gamma.A.B \arrow[r, "p_B"] & \Gamma.A \arrow[r, "p_A"] & \Gamma & \rightsquigarrow & \Gamma.\Pi_AB \arrow[r, "p_{\Pi_AB}"] & \Gamma
\end{tikzcd}
\]
with an introduction rule that says that sections of $p_{\Pi_AB}$ are given by sections of $p_B$. This, together with the fact that contextual categories must admit pullbacks of display maps, suggests that it would be reasonable to restrict to categories $\cE$ that are \textbf{locally cartesian closed}, meaning that for any $f \colon \Delta \to \Gamma$ the composition functor has two right adjoints
\begin{equation}\label{eq:lcc}
\begin{tikzcd}[sep=large] \cE_{/\Delta} \arrow[r, bend left=35, "\Sigma_f", "\bot"'] \arrow[r, bend right=35, "\Pi_f"', "\bot"] & \cE_{/\Gamma} \arrow[l, "f^*" description]
\end{tikzcd}
\end{equation}
defined by pullback along $f$ and pushforward along $f$, respectively. Note when $\cE$ has a terminal object $1$---as is required to model the empty context---this implies that $\cE \cong \cE_{/1}$ and indeed any slice $\cE_{/\Gamma}$ is cartesian closed and has all finite limits.

Note, however, that the definition of a contextual category, implicitly given in Definition \ref{defn:cat-of-contexts}, requires strict stability of the canonical pullback squares, not just the existence of pullbacks. In addition, each logical structure, such as the identity type structure of  Definition \ref{defn:id-structure}, requires the pullbacks to preserve everything on the nose, up to equality of objects. Thus, the more categorically-natural requirement, that pullbacks preserve the various logical structures up to isomorphism, is not enough.

This leads to a massive coherence problem. It is not generally possible to choose strictly functorial pullbacks in a category with pullbacks, though it is possible to replace a category with pullbacks by an equivalent category with strictly functorial pullbacks.\footnote{This can be achieved by applying the general techniques for replacing a fibration by a split fibration to the codomain-projection fibration $\cod \colon \cE^\2 \to \cE$; see \cite{lumsdaine-mathoverflow} for a discussion.} But even after this is achieved, there remains the task of ensuring strict stability of all the logical structures.

Voevodsky's approach to the coherence problem makes use of a ``universe'' in a category, defined as follows:

\begin{defn}[{\cite[2.1]{voevodsky-C}}]\label{defn:universe}
 A \textbf{universe} in a category $\cE$ consists of an object $U$, together with a morphism $\pi \colon \tilde{U} \to U$, and, for each map $A \colon \Gamma \to U$, a choice of pullback square
 \[ \begin{tikzcd} {(\Gamma; A)} \arrow[d, "p_{A}"'] \arrow[r, "{q_A}"] \arrow[dr, phantom, "\lrcorner" very near start] & \tilde{U} \arrow[d, "\pi"] \\ \Gamma \arrow[r, "A"'] & U
\end{tikzcd}
\]
\end{defn}

Given a pair of maps $A \colon \Gamma \to U$, $B \colon (\Gamma;A) \to U$, we write $(\Gamma; A, B)$ for $((\Gamma;A);B)$, and we extend this abbreviation to finite sequences.

\begin{cons}[{\cite[2.12]{voevodsky-C}}]\label{cons:universe-category}
  Given a category $\cE$ with a universe $U$ and a terminal object $1$, define a contextual category $\cE_U$ as follows:
  \begin{itemize}
    \item The objects of $\cE_U$ are finite sequences of morphisms as follows:
    \[\ob_n\cE_U = \{ (A_1,\ldots, A_n)  \in (\mor\cE)^n \mid A_i \colon (1; A_1,\ldots, A_{i-1}) \to U,  \forall i\}.\]
    \item The morphisms between a pair of objects are defined by 
    \[\cE_U((B_1, \ldots, B_m), (A_1,\ldots, A_n)) \coloneqq \cE( (1; B_1, \ldots B_m),(1;A_1,\ldots, A_n)).\]
    \item The terminal object is the empty sequence of morphisms.
    \item For any object $(A_1,\ldots, A_{n+1})$ of positive length, its display map is provided by the universe structure:   
    \begin{equation}\label{eq:universe-display-map} \begin{tikzcd} {(1;A_1,\ldots, A_{n+1})} \arrow[d, "p_{A_{n+1}}"'] \arrow[r] \arrow[dr, phantom, "\lrcorner" very near start] & \tilde{U} \arrow[d, "\pi"] \\ {(1;A_1,\ldots, A_n)} \arrow[r, "A_{n+1}"'] & U \end{tikzcd} \end{equation}
    \item Finally the canonical pullback associated to a morphism $f \colon (B_1,\ldots, B_m) \to (A_1,\ldots, A_n)$ and a display map \eqref{eq:universe-display-map} is defined by factoring the chosen pullback square for the composite $A_{n+1} \cdot f$ through the pullback that defines the context extension $(A_1,\ldots, A_{n+1})$:
    \[
      \begin{tikzcd}
        {(1;B_1,\ldots, B_m, A_{n+1}\cdot f)} \arrow[r] \arrow[d, "p_{f^*A_{n+1}}"'] \arrow[dr, phantom, "\lrcorner" very near start] & {(1;A_1,\ldots, A_{n}, A_{n+1})} \arrow[d, "p_{A_{n+1}}"'] \arrow[r] \arrow[dr, phantom, "\lrcorner" very near start] & \tilde{U} \arrow[d, "\pi"] \\ {(1; B_1,\ldots, B_m)} \arrow[r, "f"'] & {(1;A_1,\ldots,A_n)} \arrow[r, "A_{n+1}"'] & U
      \end{tikzcd}
    \]
  \end{itemize}
\end{cons}

Indeed every small contextual category arises this way:

\begin{prop}[{\cite[5.2]{voevodsky-C}}]
  Let $\cC$ be a small contextual category. Consider the universe $U$ in the presheaf category $[\cC^\op, \Set]$ given by 
  \[ U(\Gamma) = \{ \Gamma.A \} \qquad \tilde{U}(\Gamma) = \{ \textup{sections}\ s : \Gamma \to \Gamma.A \}\]
  with the evident projection map and any choice of pullbacks. Then $[\cC^\op,\Set]_U$ is isomorphic, as a contextual category, to $\cC$.
\end{prop}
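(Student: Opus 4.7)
The plan is to exhibit an explicit isomorphism of contextual categories $F\colon \cC \to [\cC^\op,\Set]_U$ by fully exploiting the Yoneda lemma together with a judicious choice of pullbacks in the definition of the universe. Since Construction only requires a pullback to be chosen for each map $A\colon \Gamma \to U$ that arises inductively, and since in $[\cC^\op,\Set]_U$ every such $\Gamma$ is built up from the terminal $1 = \yo(1_\cC)$ by repeated pullback, I would first show that each such $\Gamma$ may be taken to be representable, and then use the Yoneda lemma to match everything up on the nose.

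The key computation is the following: for a representable $\yo(\Gamma_0)$ and a map $A\colon \yo(\Gamma_0) \to U$, which by Yoneda corresponds to an element of $U(\Gamma_0) = \{\Gamma_0.A_0\}$, the pullback $\yo(\Gamma_0) \times_U \tilde{U}$ is canonically represented by $\yo(\Gamma_0.A_0)$. Indeed, an element of this pullback at stage $\Delta$ consists of a map $f\colon \Delta \to \Gamma_0$ together with a section of the display map $p_{f^*A_0}\colon \Delta.f^*A_0 \to \Delta$; by the universal property of the canonical pullback square in $\cC$ this data is equivalent to a single map $\Delta \to \Gamma_0.A_0$. I would choose $\yo(\Gamma_0.A_0)$ as the canonical pullback in Definition, so that inductively the object $(1; A_1,\ldots,A_k)$ of $[\cC^\op,\Set]_U$ is literally $\yo(1_\cC.A_1.\cdots.A_k)$.

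With these choices in place, $F$ is defined on objects by induction on length: the empty context is sent to the empty sequence, and an object $1_\cC.A_1.\cdots.A_n$ of length $n$ is sent to the tuple $(A_1,\ldots,A_n)$, where each $A_i$ is regarded, via the Yoneda lemma applied to $(1;A_1,\ldots,A_{i-1}) = \yo(1_\cC.A_1.\cdots.A_{i-1})$, as a natural transformation into $U$ classifying the $i$th context extension. On morphisms $F$ is the Yoneda embedding: a substitution $\Delta \to \Gamma$ in $\cC$ is sent to $\yo(\Delta) \to \yo(\Gamma)$, which by Construction is precisely a morphism of $[\cC^\op,\Set]_U$. I would then check that $F$ preserves contextual structure strictly: the terminal object and the length grading by construction; the display maps by the pullback identification above (so the display map in $[\cC^\op,\Set]_U$ is the Yoneda image of $p_{A_{n+1}}$); and the canonical pullback square assigned in $[\cC^\op,\Set]_U$ to a substitution and a display map is the Yoneda image of the canonical pullback in $\cC$, so strict stability on one side transfers from strict stability on the other. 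Bijectivity of $F$ on morphisms is immediate from full faithfulness of $\yo$, and bijectivity on objects is immediate by length-induction together with the Yoneda lemma $U(\Gamma) \cong [\cC^\op,\Set](\yo(\Gamma),U)$ and analogously for context extensions.

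The main obstacle is the strict-functoriality bookkeeping. One must verify that the inductive choice of representable pullbacks is consistent with the strict stability clauses of a contextual category, so that the identifications $(1;A_1,\ldots,A_k) = \yo(1_\cC.A_1.\cdots.A_k)$ hold as equalities of objects of $[\cC^\op,\Set]_U$ rather than mere isomorphisms. The freedom to choose pullbacks in Definition is what makes this possible, and once the choices are made to match the strictly functorial pullback structure of $\cC$, strictness of $F$ is a formal consequence of the Yoneda embedding being functorial. This is exactly the insight that motivates Voevodsky's universe-based approach to coherence: choosing pullbacks cleverly at the universe level propagates to all derived structure automatically.
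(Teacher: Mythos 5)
The paper states this proposition without proof, citing Voevodsky, so there is no in-paper argument to compare against; your argument is correct and is the natural one. The crucial computation---that the pullback $\yo(\Gamma_0)\times_U\tilde U$ along the map $A\colon\yo(\Gamma_0)\to U$ classifying a context extension $\Gamma_0.A_0$ is representable by $\Gamma_0.A_0$---is verified correctly: at stage $\Delta$ an element of the pullback is a pair $(f,s)$ with $f\colon\Delta\to\Gamma_0$ and $s$ a section of the display map $\Delta.f^*A_0\to\Delta$, and the universal property of the canonical pullback square in $\cC$ packages $(f,s)$ as a single map $\Delta\to\Gamma_0.A_0$, naturally in $\Delta$. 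Choosing $\yo(\Gamma_0.A_0)$ as the chosen pullback then makes the objects $(1;A_1,\ldots,A_k)$ of $[\cC^\op,\Set]_U$ representable by contexts of $\cC$, and the Yoneda embedding supplies the isomorphism of contextual categories, with strictness of display maps and canonical pullbacks transferring from $\cC$ exactly as you describe. One point you use implicitly but could usefully make explicit: the presheaf $U$ in the statement is a genuine (strict) functor only because $\cC$ is a \emph{contextual} category---its action on a substitution $g$ is $\Gamma.A\mapsto\Delta.g^*A$, and functoriality is precisely the clause $\Theta.g^*f^*A=\Theta.(f\cdot g)^*A$ and $\Gamma.1^*A=\Gamma.A$ in Definition \ref{defn:cat-of-contexts}. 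This same strictness is what guarantees that your chosen representable pullbacks in $[\cC^\op,\Set]_U$ compose on the nose, as Construction \ref{cons:universe-category} requires for the canonical pullback along a composite $A_{n+1}\cdot f$.
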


When the original category $\cE$ is locally cartesian closed, it is possible to specify additional structure on the universe $U$ that would equip  the contextual category $\cE_U$ with the corresponding logical structure. The idea is that this introduces the desired structure in the universal case.  For instance, to describe an identity structure on a universe, we require the following notion.

\begin{dig}
Recall that for maps $i \colon A \to B$ and $f \colon X \to Y$ in $\cE$, a \textbf{lifting operation} witnessing a lifting property as below-left is a section to the map below-right:
\[ \begin{tikzcd} A \arrow[r, "x"] \arrow[d, "i"'] & X \arrow[d, "f"] \\ B \arrow[r, "y"'] \arrow[ur, dashed] & Y \end{tikzcd} \qquad \begin{tikzcd} \cE(B,X) \arrow[r, "{(-\cdot i, f\cdot -)}"] & \cE(A,X) \times_{\cE(A,Y)} \cE(B,Y) \end{tikzcd}\]
If $\cE$ is cartesian closed, an \textbf{internal lifting operation} is a section to the map
\[ \begin{tikzcd} X^B \arrow[r, "{(-\cdot i,f \cdot -)}"] & X^A \times_{Y^A} Y^B \end{tikzcd}\]
\end{dig}

Now suppose $\cE$ is a locally cartesian closed category with a universe $U$. If $U$ has an \emph{identity structure}, introduced below, we will see that the context category $\cE_{U}$ has an identity type structure in the sense of Definition \ref{defn:id-structure}. To motivate the definition, note that since all display maps are pullbacks of $\pi$ it suffices to consider orthogonality against $\pi$ in the slice over $U$. 

\begin{defn}\label{defn:id-structure-universe} An \textbf{identity structure} on a universe $\pi \colon \tilde{U} \to U$ consists of a map 
  \[ \Id \colon \tilde{U} \times_U \tilde{U} \to U\]
and a specified lift $r \colon \tilde{U} \to \Id^*\tilde{U}$ of the relative diagonal
\[
\begin{tikzcd} & \Id^*\tilde{U} \arrow[r] \arrow[d] \arrow[dr, phantom, "\lrcorner" very near start] & \tilde{U} \arrow[d, "\pi"] \\ \tilde{U} \arrow[ur, dashed, "r"] \arrow[r, "{(1,1)}"'] & \tilde{U} \times_U \tilde{U} \arrow[r, "\Id"'] & U
\end{tikzcd}
\]
 together with an internal lifting operation $J$ for $r$ against $\pi \times U$ in $\cE_{/U}$.
\end{defn}

\begin{thm}[{\cite[\S2.4]{voevodsky-ID}}]\label{thm:id-structure} An identity structure on a universe $U$ in $\cE$ induces an identity type structure on the contextual category $\cE_U$.
\end{thm}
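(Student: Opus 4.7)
The plan is to interpret each piece of the identity type structure of Definition \ref{defn:id-structure} by pulling back the corresponding universal datum of Definition \ref{defn:id-structure-universe} along the classifying maps that present the context extensions in $\cE_U$. All strictness and stability-under-substitution conditions will then follow from the assumed strict stability of the canonical pullbacks of $\pi$ used in Construction \ref{cons:universe-category}.

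First I would construct $\Gamma.A.A.\Id_A$ and $\refl_A$. An object $\Gamma.A$ of $\cE_U$ is classified by some $A \colon \Gamma \to U$, so the canonical pullback square provides $q_A \colon \Gamma.A \to \tilde U$, and iterating the context extension yields $\Gamma.A.A$ as the canonical pullback $\Gamma.A \times_\Gamma \Gamma.A$. The universal property of $\tilde U \times_U \tilde U$ supplies a canonical comparison $\Gamma.A.A \to \tilde U \times_U \tilde U$, whose composite with $\Id$ I would take as the classifying map of $\Id_A$. By pullback pasting, the context extension $\Gamma.A.A.\Id_A$ then coincides with the pullback of $\Id^*\tilde U \to \tilde U \times_U \tilde U$ along this comparison map. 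The reflexivity section $\refl_A$ is obtained by pulling the universal $r \colon \tilde U \to \Id^*\tilde U$ back along $q_A$, and compatibility with the fibered diagonal $(1,1) \colon \Gamma.A \to \Gamma.A.A$ follows from the analogous property of $r$.

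The substantive step is the construction of $J$. Given $P \colon \Gamma.A.A.\Id_A \to U$ and a section $d$ of $p_P$ along $\refl_A$, the canonical map $\Gamma.A.A.\Id_A \to \Id^*\tilde U$ constructed above exhibits the whole lifting problem as the pullback of a lifting problem in $\cE_{/U}$ whose right-hand leg is a pullback of $\pi$. The internal lifting operation of Definition \ref{defn:id-structure-universe} is a section of a Leibniz exponential formed in $\cE_{/U}$; applying it to the transpose of the data $(P,d)$ and then pulling the resulting universal section back along the classifying map of $P$ produces the required section $J$ of $p_P$, while the equation $J \cdot \refl_A = d$ comes from the defining equation of the universal internal lift.

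Finally, strict stability of the resulting data under an arbitrary substitution $f \colon \Delta \to \Gamma$ reduces to strict stability of the canonical pullbacks of $\pi$, since each ingredient has been built by such a pullback from a single fixed universal datum. I expect the main obstacle to be the careful bookkeeping of the previous paragraph: the internal lifting operation accepts a cartesian-closed transpose of the lifting data rather than the data itself, and making this transposition genuinely compatible with the strict canonical pullbacks of $\cE_U$ is where the local cartesian closure of $\cE$ and the exponentials in $\cE_{/U}$ become indispensable.
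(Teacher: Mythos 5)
Your proposal follows essentially the same route as the paper's proof: build $\Gamma.A.A.\Id_A$ and $\refl_A$ as canonical pullbacks of the universal data $\Id$ and $r$, use pullback-stability of internal lifting operations in a locally cartesian closed category to obtain $J$, and deduce strict stability under substitution from strict functoriality of the canonical pullbacks of $\pi$. The one place where your description is a bit looser than the paper's is the claim that the lifting problem $(P,d)$ is itself a pullback of a single lifting problem over $U$ — in the paper what pulls back is the internal lifting \emph{operation} (from $r$ against $\pi\times U$ in $\cE_{/U}$ to $\refl_A$ against the generic display map $\pi\times\Gamma$ in $\cE_{/\Gamma}$, via stability of internal homs under $f^*$), and this pulled-back operation is then applied to the specific $\Gamma$-indexed data $(P,d)$; this is exactly the transposition bookkeeping you correctly flag as the delicate step.
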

\begin{proof} An identity structure on $U$ defines a factorization of the fibered diagonal $\pi \colon \tilde{U} \to U$, whose right factor is a display map. This pulls back, using the specified pullback squares of Definition \ref{defn:universe},  to define the factorization required by an identity type structure on $\cE_U$:
\[
\begin{tikzcd}[sep=small]  {(\Gamma;A)} \arrow[dr, "\refl_A"] \arrow[ddddrr, phantom, "\lrcorner" pos=.05] \arrow[ddddr, "p_A"'] \arrow[rrr, dotted] & & &[-35pt] \tilde{U} \arrow[dr, "r"] \arrow[ddddr, "\pi"' pos=.35]\\ & {(\Gamma; A,A,\Id_A)} \arrow[dr, "p_{\Id_A}"] \arrow[ddd] \arrow[rrr,dotted] \arrow[dddr, phantom, "\lrcorner" very near start] & & & \Id^*\tilde{U} \arrow[drrr, phantom, "\lrcorner" very near start]\arrow[rrr]\arrow[dr] \arrow[ddd]& & & \tilde{U} \arrow[dr, "\pi"]\\ & &  {(\Gamma;A,A)}  \arrow[ddl] \arrow[rrr, dotted] \arrow[ddr, phantom, "\lrcorner" very near start]& & & \tilde{U} \times_U \tilde{U} \arrow[ddl] \arrow[rrr, "\Id"] & & ~ & [+1em] U \\ ~ \\ & \Gamma \arrow[rrr, dotted, "A"'] &~ & ~& U
\end{tikzcd}    
\]

Internal homs in a locally cartesian closed category are stable under pullback: given $f \colon \Delta \to \Gamma$ and $A,X \in \cE_{/\Gamma}$, \[ f^*\map_\Gamma(A,X) \cong \map_\Delta(f^*A,f^*X).\] Consequently, an internal lifting operation in a slice pulls back to define an internal lifting operation between the pullbacks of the original maps. For any morphism $A \colon \Gamma \to U$ in $\cE$, the map $\refl_A \colon (\Gamma;A) \to (\Gamma;A,A,\Id_A)$ is defined to be the pullback of the map $r \in \cE_{/U}$ along $A \colon \Gamma \to U$, while the map $\pi \times U \in \cE_{/U}$ pulls back to the map $\pi \times \Gamma \in \cE_{/\Gamma}$, which is the ``generic display map'' in the slice over $\Gamma$. The internal lifting operation between $r$ and $\pi \times U$ in $\cE_{/U}$ then defines an internal lifting operation between $\refl_A$ and $\pi \times \Gamma$ in $\cE_{/\Gamma}$, which in particular specifies a solution to any lifting problem in $\cE$ of the form below-left and hence also below-center and below-right:
\[
  \begin{tikzcd} {(\Gamma;A)} \arrow[d, "\refl_A"'] \arrow[r]& [+1em] \tilde{U} \times \Gamma \arrow[d, "\pi \times \Gamma"] & [-2em] {(\Gamma;A)} \arrow[d, "\refl_A"'] \arrow[r]& [-1em]\tilde{U} \times \Gamma \arrow[d, "\pi \times \Gamma"'] \arrow[dr, phantom, "\lrcorner" very near start]  \arrow[r] & \tilde{U} \arrow[d, "\pi"] & [-2em] {(\Gamma;A)} \arrow[d, "\refl_A"'] \arrow[r]& [-1em] {(\Gamma;A,A,\Id_A,P)} \arrow[dr, phantom, "\lrcorner" very near start]\arrow[d, "p_P"] \arrow[r] & [-1em]  \tilde{U}\arrow[d, "\pi"] \\ {(\Gamma;A,A,\Id_A)} \arrow[r, "{(P, p_A\cdot p_A \cdot p_{\Id})}"'] \arrow[ur, dashed] & U \times \Gamma & {(\Gamma;A,A,\Id_A)} \arrow[r] \arrow[rr, bend right=10, "P"'] \arrow[urr, dotted, bend right=5] & U \times \Gamma \arrow[r] & U  & {(\Gamma;A,A,\Id_A)} \arrow[r, equals] \arrow[ur, dashed] & {(\Gamma;A,A,\Id_A)} \arrow[r, "P"'] & U
  \end{tikzcd}
\]
When interpreted in the contextual category $\cE_{U}$, this right-hand lift provides specified solutions to the lifting problems in Definition  \ref{defn:id-structure}. Since the lift is defined as the pullback of a universal specified lift to a map between objects defined as strict pullbacks in the contextual category $\cE_{U}$, this data is strictly stable under substitution as required.
\end{proof}

Of course, it remains to construct a suitable universe in our locally cartesian closed category. In part II, we explain how this is done in the category of simplicial sets, leading to the simplicial model of univalent foundations.

\part*{Lecture II: The simplicial model of univalent foundations}

\section{A universal fibration}\label{sec:universe}

Recall from \S\ref{ssec:cc-universe} that Voevodsky defined a \textbf{universe} in a category $\cE$ to be a morphism $\pi \colon \tilde{U} \to U$ together with a chosen pullback along any $A \colon \Gamma \to U$. In the resulting contextual category, the map $A \colon \Gamma \to U$ encodes an extension of the context $\Gamma$ by $A$ with the chosen pullback of $\pi$ defining the corresponding display map.

When $\cE$ is the category of simplicial sets, the intuitions developed in \S\ref{ssec:why-simplicial-sets} suggest that we would like the display maps to be Kan fibrations, which we refer to simply as ``fibrations'' whenever there is no competing notion in sight. Thus, we would like the universe to be a simplicial set $U$ that classifies fibrations. By the Yoneda lemma, this suggests that $U_n$ should be defined to be the set of fibrations over $\Delta^n$, but there are two problems with this na\"{i}ve construction:
\begin{enumerate}
  \item Size: We must restrict to ``small'' fibrations---whose fibers are $\kappa$-presentable for some regular cardinal $\kappa$---and then ensure that we are taking large sets of all such, rather than proper classes, in order for $U_n$ to be a set.
  \item Coherence: The action of the simplicial operators $\alpha \colon \Delta^m \to \Delta^n$ by pullback is only functorial up to isomorphism, not on the nose, so the na\"{i}ve construction does not guarantee a strict functor.
\end{enumerate}
We largely sweep the size issues under the rug---referring to a ``\emph{set} of \emph{small} fibrations'' without delving too deeply into its construction\footnote{The original sources \cite{klv} and \cite{shulman} provide a precise accounting.}---to reserve our attention for the coherence issues, which are quite interesting. 

\subsection{What do fibrations form?}

The coherence issues derive from the fact that chosen pullbacks are not typically strictly functorial but rather \emph{pseudofunctorial}. Put more precisely, in a category $\cE$ with pullbacks, there is a (large) groupoid-valued pseudofunctor
\[ \begin{tikzcd}[row sep=tiny]  \cE^\op \arrow[r, "\EE"] & \GPD  \\ B \arrow[r, maps to] & (\cE_{/B})^\cong\arrow[d, "f^*"] \\ [7pt] A \arrow[u, "f"] \arrow[r, maps to] & (\cE_{/A})^\cong \end{tikzcd}\] that sends an object $B$ to the groupoid $(\cE_{/B})^\cong$ of morphisms with codomain $B$ and acts on arrows by pullback. The pseudofunctor $\EE$ is called the \textbf{core of self-indexing} of $\cE$, the (contravariant) ``self-indexing'' being the corresponding pseudofunctor $B \mapsto \cE_{/B} : \cE^\op \to \CAT$. When $\cE$ has a pullback stable class of fibrations $\sF$, they similarly assemble into a pseudofunctor
\[ \begin{tikzcd}[row sep=tiny]  \cE^\op \arrow[r, "\FF"] & \GPD  \\ B \arrow[r, maps to] & (\sF_{/B})^\cong\arrow[d, "f^*"] \\ [7pt] A \arrow[u, "f"] \arrow[r, maps to] & (\sF_{/A})^\cong \end{tikzcd}\] 
defined by restricting to the full subgroupoid $(\sF_{/B})^\cong \subset (\cE_{/B})^\cong$  of fibrations over $B$. For any regular cardinal $\kappa$, there are analogous pseudofunctors $\FF^\kappa \hookrightarrow \EE^\kappa$ defined by restricting to small fibrations or small maps as appropriate. 

By the bicategorical Yoneda lemma, a small fibration $p \colon E \fto B$ defines a pseudonatural transformation \[ \begin{tikzcd} \cE(-,B) \arrow[r, "p"] & \FF^\kappa .\end{tikzcd}\] We might imagine that the \emph{universal fibration} $\pi \colon \tilde{U} \fto U$ is a representing element, defining a pseudonatural isomorphism $\cE(B,U) \cong \FF^\kappa(B)$. But this can't be, at least not if $\cE$ is very interesting:
\begin{itemize}
  \item Since $\cE$ is a 1-category $\cE(B,U)$ is a set, while $\FF^\kappa(B) \coloneqq (\sF_{/B})^{\cong,\kappa}$ is the groupoid of small fibrations over $B$. 
  \item Even ignoring this, we can see that the same fibration might be classified by multiple maps to $U$ and the same map might classify multiple fibrations:
\[ 
  \begin{tikzcd} E \arrow[r, "\cong"] \arrow[dr, phantom, "\lrcorner" very near start]\arrow[d, two heads, "p"'] & E \arrow[d, two heads, "p"'] \arrow[r] \arrow[dr, phantom, "\lrcorner" very near start] & \tilde{U} \arrow[d, "\pi", two heads] & & E' \arrow[dr, phantom, "\lrcorner" very near start] \arrow[r, "\cong"] \arrow[d, two heads, "p'"'] & E \arrow[d, two heads, "p"'] \arrow[r] \arrow[dr, phantom, "\lrcorner" very near start] & \tilde{U} \arrow[d, "\pi", two heads]  \\ B \arrow[r, "\cong"] & B \arrow[r, "{\name{p}}"'] & U & &  B \arrow[r, equals] & B \arrow[r, "{\name{p}}"'] & U
  \end{tikzcd}
\]
\end{itemize}

In a contextual category built from a universe, the display maps arose as specified pullbacks of the universe: a type in context $\Gamma$ was encoded by the data of a morphism $A \colon \Gamma \to U$. The corresponding display map was then the pullback axiomatized by the universe structure
\[
  \begin{tikzcd}
    (\Gamma;A) \arrow[r, "q_A"] \arrow[d, "p_A"'] \arrow[dr, phantom, "\lrcorner" very near start] & \tilde{U} \arrow[d, "\pi"] \\ \Gamma \arrow[r, "A"'] & U
  \end{tikzcd}
\]
This guarantees that each display map may be realized as some pullback of $\pi \colon \tilde{U} \to U$ but does not imply that such pullbacks are unique. Thus, we ask the universe to \emph{weakly} classify the small fibrations, demanding that the pseudonatural transformation $\pi \colon \cE(-,U) \to \FF^\kappa$ associated to $\pi \colon \tilde{U} \to U$ is surjective. Actually, we demand a bit more.

\begin{defn}\label{defn:classifying-fibration} In a model category $\cE$ with all objects cofibrant, a small fibration $\pi \colon \tilde{U} \to U$ is a \textbf{universal small fibration} if the pseudonatural transformation \[\pi \colon \cE(-,U) \to \FF^\kappa\] is a trivial fibration, meaning that for all cofibrations $i \colon A \cto B$ in $\cE$, any strictly commutative square of pseudonatural transformations admits a lift:
  \[\begin{tikzcd}
 \cE(-,A) \arrow[d, "i\circ -"'] \arrow[r, "h \circ -"] & \cE(-,U) \arrow[d, "\pi"] \\ \cE(-,B) \arrow[ur, dashed, "k"'] \arrow[r, "p"'] & \FF^\kappa
  \end{tikzcd}
\]
  \end{defn}

The condition amounts to the following property: 
given any pair of pullback squares between small fibrations as below
\begin{equation}\label{eq:realignment-property}
\begin{tikzcd}
  D \arrow[dd, two heads, "q"'] \arrow[rr] \arrow[drr, phantom, "\lrcorner" very near start] \arrow[dddr, phantom, "\lrcorner" very near start]\arrow[dr] & & \tilde{U} \arrow[dd, two heads, "\pi"] \\ & E  \arrow[dr, phantom, "\lrcorner" very near start]\arrow[ur, dashed] & ~ \\ A \arrow[dr, tail, "i"'] \arrow[rr, "h" near end] & & U \\ & B \arrow[ur, dashed, "k"'] \arrow[from=uu, two heads, crossing over, "p"' near start]
\end{tikzcd}  
\end{equation}
there exists an extension $k$ of $h$ along $i$ factoring the back pullback square as a composite of pullbacks. This has been referred to as \textbf{strong gluing} by Sterling and Angiuli \cite{angiuli-sterling}, the \textbf{stratification property} by \cite[2.3.3]{stenzel}, and \textbf{acyclicity} by Shulman \cite{shulman-reedy,shulman}. A more recent name for this conditition is \textbf{realignment} \cite{GSS}.

\begin{rmk}\label{rmk:weak-classification}
  As our terminology suggests, a universal fibration weakly classifies fibrations, at least when all objects are cofibrant. Taking $A=\emptyset$, \eqref{eq:realignment-property} implies that any $p \colon E \fto B$ is classified by a pullback square into $\pi \colon \tilde{U} \fto U$.
\end{rmk}

We'll explain the further relevance of realignment in \S\ref{ssec:fibrancy} and \S\ref{ssec:univalence}, but first note that we have entirely sidestepped the question of how a fibration classifier might be constructed. It is to that subject that we now turn.

\subsection{Hofmann-Streicher universes}

There is a general method, due to Hofmann and Streicher \cite{HS-lifting}, that defines a weak classifier for small maps in any presheaf category, which satisfies a realignment property analogous to Definition \ref{defn:classifying-fibration}. We describe this construction in a particularly slick way observed by Awodey \cite{awodey-hofmann} and independently by Sattler. Rather than introduce general notation for presheaf categories, we work in the category $\cE = \hat{\DDelta}$ of simplicial sets.

Let $\kappa$ be an infinite regular cardinal,\footnote{As the title of their article suggests, Hofmann and Streicher prefer to work with an inaccessible cardinal \cite{HS-lifting}, but for the purposes of obtaining a $\kappa$-small map classifier it suffices to use a regular cardinal $\kappa$ larger than the cardinality of the morphisms in the indexing category.} and write $\set \subset \Set$ for a full subcategory containing at least one set of each cardinality $\lambda < \kappa$ and at most $\kappa$ many sets of each cardinality. Write $\sset \subset \sSet$ for the category of $\kappa$-small simplicial sets. By regularity, a map $f \colon A \to B$ of simplicial sets lies in $\sset$, i.e., is a $\kappa$-\textbf{small map} or simply a \textbf{small map}, just when its base and fibers are $\kappa$-small. 
 
\begin{defn} Consider the (covariant) self-indexing functor $\DDelta_{/\bullet} \colon \DDelta \to \Cat$ sending $[n] \in \DDelta$ to the slice category $\DDelta_{/[n]}$ and the simplicial operators $\alpha \colon [n] \to [m]$ to the composition functors $\Sigma_\alpha \colon \DDelta_{/[n]} \to \DDelta_{/[m]}$. These slice categories are the \emph{categories of elements} of the simplicial sets $\Delta^n$ and the left Kan extension of this functor along the Yoneda embedding defines the category of elements functor and its right adjoint: 
  \[
    \begin{tikzcd} & \DDelta \arrow[dr, "{\DDelta_{/\bullet}}"] \arrow[dl, hook', "\yo"'] \\ \sSet \arrow[rr, bend left=10, "{\el{}\coloneqq\colim_{\DDelta_{/\bullet}}}"] \arrow[rr, phantom, "\bot"] && \Cat \arrow[ll, bend left=10, "{N_{\el{}} \coloneqq \hom(\DDelta_{/\bullet}, -)}"]
  \end{tikzcd}
 \]
The $\kappa$-small map classifier is then defined by applying the right adjoint to the opposite of the forgetful functor from pointed sets to sets:
\[\begin{tikzcd}[column sep=small] \tilde{V} \arrow[r, phantom, "\coloneqq"] \arrow[d, "\varpi"'] & N_{\el{}}\set_{\ast}^\op \arrow[d] \\ V\arrow[r, phantom, "\coloneqq"] & N_{\el{}}\set_{\ast}
\end{tikzcd}
\]
\end{defn}

Explicitly, an $n$-simplex in $V$ is a presheaf $A \colon \DDelta_{/[n]}^\op \to \set$. By the coherence problem noted above, it is insufficient to define $V_n$ to be the set of $\kappa$-small maps over $\Delta^n$ because the action of a simplicial operators $\alpha \colon \Delta^m \to \Delta^n$, by pullback, is only pseudofunctorial. However, under the equivalence of categories\footnote{More generally, the slice category over any presheaf $X$ is equivalent to the category of presheaves on the category of elements of $X$. In particular, ``$\DDelta_{/[n]}^\op$'' should be read as an abbreviation for $(\DDelta_{/[n]})^\op$, the slice category $\DDelta_{/[n]}$ being the category of elements of the simplicial set $\Delta^n$.}
\[
  \begin{tikzcd}[column sep=tiny] \sset_{/\Delta^n} \arrow[r, phantom, "\simeq"] \arrow[d, "\alpha^*"'] & \set^{\DDelta_{/[n]}^\op} \arrow[d, "-\cdot \alpha_!"] \\ 
    \sset_{/\Delta^m} \arrow[r, phantom, "\simeq"] & \set^{\DDelta_{/[m]}^\op}  
  \end{tikzcd}
\]
the pullback action is replaced by precomposition with the composition functor $\alpha_! \colon \DDelta_{/[m]} \to \DDelta_{/[n]}$, which is strictly functorial, defining a large simplicial set $V \in \sSet$. An element $A \colon \Delta^n \to V$ defines a small map over $\Delta^n$ whose fiber over $\alpha \in (\Delta^n)_m$ is the set $A_\alpha$  defined by the functor $A \colon \DDelta_{/[n]}^\op \to \set$. 

An $n$-simplex in $\tilde{V}$ is a presheaf $A \colon \DDelta_{/[n]}^\op \to \set_\ast$ valued in pointed sets. Since the indexing category $\DDelta_{/[n]}$ has a terminal object $\id_{[n]}$, an $n$-simplex in $\tilde{V}$ is equally given by the data of $A \colon \Delta^n \to V$ together with a  section 
\[
  \begin{tikzcd}
  & \tilde{V} \arrow[d, "\varpi"]\\ \Delta^n \arrow[ur, dashed,  "a"]  \arrow[r, "A"'] & V
 \end{tikzcd}
\]


Note that even though $V$ is large, the evident forgetful map $\pi \colon \tilde{V} \to V$ is $\kappa$-small since this means that the pullback to any $\kappa$-small object is $\kappa$-small.  We prove that $\pi \colon \tilde{V} \to V$ weakly classifies small maps in $\cE = \hat{\DDelta}$ with arbitrary codomain by proving that it has the realignment property \eqref{eq:realignment-property} with respect to monomorphisms $i \colon A \cto B$ and small maps over $A$ and $B$.

\begin{lem}[{\cite[3.9]{cisinski},\cite[8.4]{orton-pitts}, \cite[6]{awodey-hofmann}}]\label{lem:Hofmann-Streicher-realignment}
 The Hofmann-Streicher universe $\pi \colon \tilde{V} \to V$ satisfies realignment relative to the the $\kappa$-small maps $\EE^\kappa \subset \EE$ in the core of self-indexing: 
 \[\begin{tikzcd}
  \cE(-,A) \arrow[d, "i\circ -"'] \arrow[r, "h \circ -"] & \cE(-,V) \arrow[d, "\pi"] \\ \cE(-,B) \arrow[ur, dashed, "k"'] \arrow[r, "p"'] & \EE^\kappa
   \end{tikzcd}
 \]
That is, given  any pair of pullback squares between maps with $\kappa$-small fibers as below
 \[
 \begin{tikzcd}
   D \arrow[dd, "q"'] \arrow[rr, "g"] \arrow[drr, phantom, "\lrcorner" very near start] \arrow[dddr, phantom, "\lrcorner" very near start]\arrow[dr, tail, "j"'] & & \tilde{V} \arrow[dd,  "\pi"] \\ & E  \arrow[dr, phantom, "\lrcorner" very near start]\arrow[ur, dashed, "\ell"'] & ~ \\ A \arrow[dr, tail, "i"'] \arrow[rr, "h" near end] & & V \\ & B \arrow[ur, dashed, "k"'] \arrow[from=uu, crossing over, "p"' near start]
 \end{tikzcd}  
 \]
 where $i \colon A \cto B$ is a monomorphism, there exists an extension $k$ of $h$ along $i$ factoring the back pullback square as a composite of pullbacks. 
\end{lem}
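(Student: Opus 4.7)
The plan is to exploit the strict classification property of the Hofmann--Streicher universe to reduce realignment to an elementary extension problem for set-valued functors. By the adjunction $\el{} \dashv N_{\el{}}$ and the Yoneda lemma, maps $X \to V$ are in natural bijection with functors $(\DDelta/X)^\op \to \set$, and maps $X \to \tilde V$ with functors $(\DDelta/X)^\op \to \set_\ast$; so the data of a map into $V$ together with an identification of its pullback with a prescribed small map corresponds to a set-valued functor together with a natural isomorphism to the canonical presheaf of the prescribed map. The key geometric input is that, since $i \colon A \cto B$ is monic, the induced functor $\DDelta/A \to \DDelta/B$ is a full subcategory inclusion and in fact identifies $\DDelta/A$ with a sieve in $\DDelta/B$.

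Under these identifications, I would unpack the given data as: a functor $H \colon (\DDelta/A)^\op \to \set$ (from $h$); the canonical presheaves $P \colon (\DDelta/B)^\op \to \set$ of $p$ and $Q \colon (\DDelta/A)^\op \to \set$ of $q$, satisfying $Q = P|_{(\DDelta/A)^\op}$ by strict functoriality of pullback in this representation; and a natural isomorphism $\phi \colon H \cong Q$ coming from the back pullback square together with $g$. The goal is then to construct an extension $K \colon (\DDelta/B)^\op \to \set$ of $H$ together with a natural isomorphism $\psi \colon K \cong P$ that restricts to $\phi$ on $(\DDelta/A)^\op$.

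My construction is by transport of structure along $\phi$: on objects, set $K(\sigma) \coloneqq H(\sigma')$ when $\sigma = i\sigma'$ (uniquely determined, since $i$ is monic) and $K(\sigma) \coloneqq P(\sigma)$ otherwise; let $\psi_\sigma$ be $\phi_{\sigma'}$ or $\id_{P(\sigma)}$ in these two cases; and for each morphism $f \colon \sigma \to \tau$ of $(\DDelta/B)^\op$, define
\[ K(f) \coloneqq \psi_\tau^{-1} \circ P(f) \circ \psi_\sigma. \]
Functoriality of $K$ and naturality of $\psi$ should be immediate (conjugation by $\psi$ preserves composition), and naturality of $\phi$ on $(\DDelta/A)^\op$ will force $K(f) = H(f)$ whenever both endpoints lie in $(\DDelta/A)^\op$, yielding $K|_{(\DDelta/A)^\op} = H$ strictly.

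Finally, translating back, $K$ corresponds to a simplicial map $k \colon B \to V$; the strict agreement $K|_{(\DDelta/A)^\op} = H$ gives $k \circ i = h$ (by naturality of the bijection in $X$), while $\psi$ provides the pullback square exhibiting $p$ as $k^\ast \pi$, furnishing the required lift $\ell \colon E \to \tilde V$ with $\ell \circ j = g$. The main obstacle I anticipate lies not in the algebra above but in verifying the naturality in $X$ of the bijection $\sSet(X, V) \cong \set^{(\DDelta/X)^\op}$ under restriction along $i$, and in checking that a classifying pullback square on the simplicial side really corresponds to a natural isomorphism with the canonical presheaf on the functor side; both should follow from the explicit construction of $V$ as $N_{\el{}}$ applied to (the opposite of) the forgetful functor $\set_\ast \to \set$.
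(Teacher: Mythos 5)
Your proposal is correct and follows essentially the same route as the paper: transpose along $\el{} \dashv N_{\el{}}$, observe that $\int i$ is an injective-on-objects (indeed sieve) inclusion, and extend the classifying functor on $(\DDelta/A)^\op$ to one on $(\DDelta/B)^\op$ using the natural isomorphism $\phi$. Your explicit transport-of-structure construction of $K$ and $\psi$ is precisely what the paper packages abstractly as a lift against the (injective-on-objects, surjective equivalence) weak factorization system on $\Cat$ (applied at the level of the arrow category to handle $\ell$ and $k$ simultaneously), so the content is the same.
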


\begin{proof}
The realignment problem in $\sSet$ transposes via the adjunction $\el{} \vdash N_{\el{}}$ to a realignment problem in $\Cat$
  \[
    \begin{tikzcd}
      \int D \arrow[dd, "\int q"'] \arrow[rr, "g"] \arrow[drr, phantom, "\lrcorner" very near start] \arrow[dddr, phantom, "\lrcorner" very near start]\arrow[dr, tail, "\int j"' near end] & & \set_\ast^\op \arrow[dd,  "\pi"] \\ & \int E  \arrow[dr, phantom, "\lrcorner" very near start]\arrow[ur, dashed, "\ell"'] & ~ \\ \int A \arrow[dr, tail, "\int i"'] \arrow[rr, "h" near end] & & \set^\op \\ & \int B \arrow[ur, dashed, "k"'] \arrow[from=uu, crossing over, "\int p"' near start]
    \end{tikzcd}  
    \]
By inspection the category of elements functor preserves pullback and thus carries $i$ and $j$ to monomorphisms $\int i \colon \int A \rightarrowtail \int B$ and $\int j\colon \int D \rightarrowtail \int E$.

Under the equivalence of categories noted above $p \colon E \to B$ defines an object $E \in \set^{(\int B)^\op}$ in the category of small presheaves on the category of elements of $B$. Applying the category of elements construction, with $\int B$ in place of $\DDelta$, we obtain a $\kappa$-small discrete fibration, which is then classified by a pullback square 
\[
  \begin{tikzcd} \int E \arrow[d, "\int p"'] \arrow[r, "e"] \arrow[dr, phantom, "\lrcorner" very near start] & \set_\ast^\op \arrow[d, "\pi"] \\ \int B \arrow[r, "b"'] & \set^\op
  \end{tikzcd}
\]
One can check that the category of elements of the $\int B$-indexed presheaf $E$ is isomorphic to the category of elements of the $\DDelta$-indexed presheaf $E$. By uniqueness of pullbacks, $\int D$ is isomorphic to the pullback of $\pi$ along $b \cdot \int i$. Under the equivalence between the category of discrete fibrations over $\int{B}$ and the category of small presheaves on $\int{B}$, this isomorphism provides the data of a natural isomorphism $\Phi \colon b \cdot \int{i} \cong h$. Since $\pi$ is a discrete fibration, $\Phi$ lifts to define a unique natural isomorphism $\Psi \colon e \cdot \int{j} \cong g$ over $\Phi$.
 
These isomorphisms define a lifting problem in the category $\Cat^\2$ in which the left-hand map is a pullback square between functors $\int j$ and $\int i$ that are injective on objects, while the right-hand map is a pullback square between the surjective equivalences $\ev_0$ defined by endpoint evaluation from the category of isomorphisms.
\[
    \begin{tikzcd}
      \int D \arrow[dd, "\int q"'] \arrow[rr, "g"] \arrow[drr, phantom, "\lrcorner" very near start] \arrow[dddr, phantom, "\lrcorner" very near start]\arrow[dr, tail, "\int j"' near end] & & (\set_{\ast}^\iso)^\op \arrow[drr, phantom, "\lrcorner" pos=.001] \arrow[ddr, phantom, "\lrcorner" pos=.01] \arrow[dd,  "\pi"  near end]\arrow[dr, utfibarrow, "\ev_0"] \arrow[rr, "\ev_1", utfibarrow] & [-10pt] ~ & \set_{\ast}^\op \arrow[dd, "\pi"] \\ & \int E \arrow[urrr, dotted, bend right=5, "\ell"' very near end] \arrow[rr, "e"' near start, crossing over] \arrow[dr, phantom, "\lrcorner" very near start]\arrow[ur, dashed] & ~ & \set_{\ast}^\op  & ~ \\ \int A \arrow[dr, tail, "\int i"'] \arrow[rr, "h" near end] & & (\set^\iso)^\op \arrow[dr, tfibarrow, "\ev_0"'] \arrow[rr, utfibarrow, "\ev_1" near end, "\sim"' near end] & ~ & \set^\op \\ & \int B \arrow[rr, "b"'] \arrow[ur, dashed] \arrow[urrr, dotted, bend right=5, "k"' very near end] \arrow[from=uu, crossing over, "\int p"' near start] & & \set^\op \arrow[from=uu, "\pi" near start, crossing over]
    \end{tikzcd}  
    \]
These maps belong to the left and right classes of a weak factorization system on $\Cat$, deriving from the (injective-on-objects, surjective equivalence) algebraic weak factorization system on $\Cat$. Thus, there exists a solution as indicated, which defines another pullback square. This lift defines the required pair of functors $k$ and $\ell$ so that $k \cdot \int i = h$ and $\ell \cdot \int j = g$, which transpose to define the desired map of simplicial sets.
\end{proof}

\section{The simplicial model of univalent foundations}\label{sec:simplicial-model}

The technique of Hofmann-Streicher universes can be used to define a universal Kan fibration, which is the key ingredient in the simplicial model of univalent foundations. This is not the approach taken to defining the universe in \cite{klv} but was quickly noted as alternative possible route; see \cite{cisinski} or \cite{streicher}.

\subsection{A universal Kan fibration}

The Hofmann-Streicher universe may be restricted to define the universal Kan fibration by taking $U \subset V$ to be the simplicial subset spanned by those small maps over simplices that are Kan fibrations and by defining $\pi \colon \tilde{U} \to U$ to be the pullback
\begin{equation}\label{eq:universal-Kan-fibration} \begin{tikzcd} \tilde{U} \arrow[d, "\pi"'] \arrow[r] \arrow[dr, phantom, "\lrcorner" very near start] & \tilde{V} \arrow[d, "\pi"] \\ U \arrow[r, hook] & V
\end{tikzcd}
\end{equation}

\begin{rmk}\label{rmk:kan-local}
Crucially, the Kan fibrations of simplicial sets are \textbf{local}: a map $p \colon E \fto B$ is a Kan fibration if and only if for all $n$ and all $b \colon \Delta^n \to B$, the pullback defines a Kan fibration. This follows from the fact that the fibrations are characterized by a right lifting property against maps with representable codomains:
\[
  \begin{tikzcd} \Lambda^n_k \arrow[d, utcofarrow] \arrow[r] & \bullet \arrow[r] \arrow[d, two heads]\arrow[dr, phantom, "\lrcorner" very near start] & E \arrow[d, two heads, "p"] \\ \Delta^n \arrow[r, equals] \arrow[ur, dashed] & \Delta^n \arrow[r, "b"'] & B
  \end{tikzcd}
\]
\end{rmk}

Consequently:

\begin{cor} $\pi \colon \tilde{U} \fto U$ is a Kan fibration. 
\end{cor}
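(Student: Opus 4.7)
The plan is to invoke the locality of Kan fibrations noted in Remark~\ref{rmk:kan-local}: to show that $\pi \colon \tilde U \to U$ is a Kan fibration, it suffices to verify that for every $n$-simplex $a \colon \Delta^n \to U$, the pullback $a^*\pi$ is a Kan fibration over $\Delta^n$. The hope is that this has been built directly into the definition of $U$.

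Unpacking, $U$ is by construction the simplicial subset of $V$ spanned by exactly those simplices $a \colon \Delta^n \to V$ for which the classified small map $a^*(\pi \colon \tilde V \to V)$ is a Kan fibration over $\Delta^n$. Because \eqref{eq:universal-Kan-fibration} is a pullback square, the pasting lemma identifies the pullback of $\pi \colon \tilde U \to U$ along any $a \colon \Delta^n \to U$ with the pullback of $\pi \colon \tilde V \to V$ along the composite $\Delta^n \xrightarrow{a} U \hookrightarrow V$. By the very defining property of $U$, this composite classifies a Kan fibration, so we are done by locality.

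There is essentially no obstacle here: the whole point of carving $U$ out of $V$ was to make the corollary fall out immediately from locality of Kan fibrations together with the pasting lemma, and all of the substantive work --- namely the construction of $V$ and the verification of its realignment property --- has already been carried out in Lemma~\ref{lem:Hofmann-Streicher-realignment}. Any remaining concern is purely notational: one should check that the simplicial subset $U \subset V$ is indeed well-defined, i.e., that its simplices are closed under the action of arbitrary simplicial operators $\alpha \colon [m] \to [n]$. This is itself a consequence of locality (or, more directly, of stability of Kan fibrations under pullback), since if $a^*\pi$ is a Kan fibration over $\Delta^n$ then so is its pullback $(a \cdot \alpha)^*\pi$ over $\Delta^m$.
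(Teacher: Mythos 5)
Your proof is correct and takes essentially the same approach as the paper: the paper's proof simply says ``By construction, each pullback [along $\Delta^n \to U$] is a Kan fibration,'' invoking the locality of Kan fibrations from Remark~\ref{rmk:kan-local}, and your argument spells out exactly this reasoning, including the pasting lemma identification and the (correct, and worth noting) observation that the defining condition on simplices of $U$ is stable under the simplicial operators.
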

\begin{proof}
  By construction, each pullback
  \[\begin{tikzcd} E \arrow[d, "p"'] \arrow[r] \arrow[dr, phantom, "\lrcorner" very near start] & \tilde{U} \arrow[d, "\pi", two heads]\\ \Delta^n \arrow[r, "{\name{p}}"'] & U
  \end{tikzcd}\]
  is a Kan fibration.
\end{proof}

Another reflection of this locality is the following:

\begin{lem}\label{lem:locality}
  Let $p \colon E \to B$ be a small map and consider any classifying square
  \[\begin{tikzcd}
    E \arrow[r] \arrow[d, "p"', two heads] \arrow[dr, phantom, "\lrcorner" very near start] & \tilde{V} \arrow[d, "\pi"] \\ B \arrow[r, "{\name{p}}"'] & V
  \end{tikzcd}
  \]
Then $p$ is a Kan fibration if and only if the classifying square factors through \eqref{eq:universal-Kan-fibration}.
\end{lem}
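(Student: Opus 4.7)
The plan is to reduce both directions to the locality of Kan fibrations (Remark~\ref{rmk:kan-local}) after unpacking the definition of $U \subset V$.

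First I would recall that $U$ is, by definition, the simplicial subset of $V$ whose $n$-simplices are those classifying maps $A \colon \Delta^n \to V$ whose associated small map over $\Delta^n$ (obtained by pulling $\varpi \colon \tilde V \to V$ back along $A$) happens to be a Kan fibration. Because $U \hookrightarrow V$ is a monomorphism of simplicial sets, the map $\name{p} \colon B \to V$ factors through $U$ if and only if each of its simplices does, i.e., iff for every $n \geq 0$ and every $b \colon \Delta^n \to B$ the composite $\name{p} \circ b$ lies in $U_n$.

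Next I would use the pasting lemma for pullback squares to identify the small map over $\Delta^n$ classified by $\name{p} \circ b$ with the pullback $b^*p \colon b^*E \to \Delta^n$ of the original $p$. Substituting this identification into the previous step, the classifying square factors through \eqref{eq:universal-Kan-fibration} precisely when $b^*p$ is a Kan fibration for every $b \colon \Delta^n \to B$ and every $n$. The locality characterization in Remark~\ref{rmk:kan-local} then equates this last condition with $p \colon E \to B$ being a Kan fibration, yielding both implications simultaneously.

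I do not foresee any real obstacle: the only nonformal ingredient is Remark~\ref{rmk:kan-local}, which is already at our disposal, and the argument is otherwise a chase through the definition of the sub-simplicial-set $U \subset V$ together with the pasting property of pullbacks. If one wished to phrase the two directions asymmetrically, the ``if'' direction could alternatively be read off from the fact that $\pi \colon \tilde U \to U$ is itself a Kan fibration (the Corollary above) together with the pullback-stability of Kan fibrations, but the uniform locality-based argument is cleaner.
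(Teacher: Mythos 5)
Your proof is correct and takes essentially the same route as the paper's: unwind the definition of the simplicial subset $U \subset V$, reduce factorization through $U$ to a condition on simplices via Yoneda, identify the simplex-wise pullbacks with $b^*p$ by pasting, and invoke locality of Kan fibrations (Remark~\ref{rmk:kan-local}). The only cosmetic difference is that the paper handles the ``if'' direction separately (pullback of the Kan fibration $\pi \colon \tilde U \to U$ is a Kan fibration) while you absorb both directions into the biconditional form of the locality remark; as you note, these amount to the same thing.
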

\begin{proof}
  If $p$ is a pullback of $\pi \colon \tilde{U} \fto U$ it is clearly a Kan fibration. Conversely, if $p$ is a Kan fibration then so is its restriction along any $b \colon \Delta^n \to B$. Recall $U$ is defined as a simplicial subset of $V$, so $\name{p} \colon B \to V$ factors through $U \hookrightarrow V$ just when each for each $b \colon \Delta^n \to B$ the corresponding map $\name{p} \cdot b \colon \Delta^n \to V$ so factors. But since a Kan fibration pulls back to Kan fibrations, this means the corresponding elements are in the simplicial subset.
\end{proof}

Put more abstractly, locality provides a pullback square 
\[ 
  \begin{tikzcd} \cE(-,U) \arrow[d, "\pi"'] \arrow[dr, phantom, "\lrcorner" very near start]\arrow[r, hook] & \cE(-,V) \arrow[d, "\pi"] \\ \FF^\kappa \arrow[r, hook] & \EE^\kappa
\end{tikzcd} 
\]
in the category of groupoid-valued pseudofunctors and pseudonatural transformations between them. Thus the realignment condition of Definition \ref{defn:classifying-fibration} follows from the corresponding result for Hofmann-Streicher universes

\begin{cor} The Kan fibration $\pi \colon \tilde{U} \to U$ is a universal $\kappa$-small Kan fibration.
\end{cor}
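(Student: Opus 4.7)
The plan is to deduce this corollary from the realignment property for Hofmann--Streicher universes (Lemma \ref{lem:Hofmann-Streicher-realignment}) together with the locality of Kan fibrations (Lemma \ref{lem:locality}). Concretely, I need to verify that the pseudonatural transformation $\pi \colon \cE(-,U) \to \FF^\kappa$ is a trivial fibration in the sense of Definition \ref{defn:classifying-fibration}, i.e.\ that every strictly commutative square from a monomorphism $i \colon A \cto B$ of simplicial sets into $\pi$ admits a diagonal filler.

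First I would consider an arbitrary such lifting problem, which unpacks to a pair of pullback squares of the form displayed in \eqref{eq:realignment-property} but with all vertical maps Kan fibrations and with specified classifying maps $h \colon A \to U$ and presented fibration $p \colon E \fto B$ together with a classifying pullback of the restriction $q \colon D \fto A$ through $\pi \colon \tilde{U} \to U$. Postcomposing with the inclusion $U \hookrightarrow V$ and $\tilde{U} \hookrightarrow \tilde{V}$ from \eqref{eq:universal-Kan-fibration}, I obtain the same data as a realignment problem against the Hofmann--Streicher universe $\pi \colon \tilde{V} \to V$, where now $p$ and $q$ are merely required to be $\kappa$-small maps (as Kan fibrations certainly are). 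Lemma \ref{lem:Hofmann-Streicher-realignment} then provides a factorization by an extension $k \colon B \to V$ of the composite $A \xrightarrow{h} U \hookrightarrow V$ along $i$, together with a compatible map $\ell \colon E \to \tilde{V}$ exhibiting the back square as a composite of pullbacks.

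It remains to show that $k$ factors through the subobject $U \hookrightarrow V$; once this is done, the pullback square \eqref{eq:universal-Kan-fibration} produces the required lift $\ell$ into $\tilde{U}$ automatically. Since $p \colon E \fto B$ is a Kan fibration and $k$ classifies $p$ up to pullback, Lemma \ref{lem:locality} states precisely that the classifying map $k \colon B \to V$ factors through $U \hookrightarrow V$. This yields the desired filler $k \colon B \to U$, which by construction restricts along $i$ to $h$ and classifies $p$ via a pullback square through $\pi \colon \tilde{U} \to U$.

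I do not expect any serious obstacle here: the content has been essentially repackaged into the two lemmas already established, and the proof is a matter of assembling them. The only subtlety worth noting is that one must check that the map $\ell$ produced by Hofmann--Streicher realignment actually lands in $\tilde{U}$, but this is immediate from the pullback square \eqref{eq:universal-Kan-fibration} defining $\tilde{U}$ once $k$ is known to factor through $U$.
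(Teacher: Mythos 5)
Your proof is correct and follows essentially the same approach as the paper: compose the given lifting problem into the Hofmann--Streicher universe $V$, solve it there using Lemma~\ref{lem:Hofmann-Streicher-realignment}, and then use Lemma~\ref{lem:locality} to conclude the resulting classifying map $k$ factors through $U\hookrightarrow V$. The paper packages the last step abstractly as the universal property of the pullback square of pseudofunctors relating $\cE(-,U)\to\FF^\kappa$ to $\cE(-,V)\to\EE^\kappa$, whereas you unwind this at the level of elements and explicitly address that $\ell$ lands in $\tilde{U}$; the content is the same.
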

\begin{proof}
For any lifting problem as presented by the left-hand square, the dashed lift exists by Lemma \ref{lem:Hofmann-Streicher-realignment}. 
\[ 
  \begin{tikzcd}\cE(-,A) \arrow[r] \arrow[d, "i\circ -"'] & \cE(-,U) \arrow[d, "\pi"'] \arrow[dr, phantom, "\lrcorner" very near start]\arrow[r, hook] & \cE(-,V) \arrow[d, "\pi"] \\  \cE(-,B) \arrow[r] \arrow[urr, dashed, bend right=5] \arrow[ur, dotted] & \FF^\kappa \arrow[r, hook] & \EE^\kappa
\end{tikzcd} 
\]
On account of the pullback established by Lemma \ref{lem:locality}, this induces the required dotted lift.
\end{proof}

Since the cofibrations in the category of simplicial sets are the monomorphisms, all simplicial sets are cofibrant. Thus taking $A=\emptyset$ we see by Remark \ref{rmk:weak-classification} that:

\begin{cor}\label{cor:weak-classifying} Any Kan fibration $p \colon E \fto B$ with small fibers is classified by a pullback square
  \[
    \begin{tikzcd} E \arrow[r] \arrow[d, two heads, "p"'] \arrow[dr, phantom, "\lrcorner" very near start] & \tilde{U} \arrow[d, two heads, "\pi"] \\ B \arrow[r, "\name{p}"'] & U
    \end{tikzcd} \qed\]
  \end{cor}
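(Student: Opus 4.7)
The proof should be essentially a one-line deduction from the preceding corollary, which established that $\pi\colon \tilde U \fto U$ is a universal $\kappa$-small Kan fibration, together with Remark \ref{rmk:weak-classification}. My plan is to spell out this deduction in the specific case $A = \emptyset$.

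First I would recall that in the category of simplicial sets the cofibrations are exactly the monomorphisms, so every object is cofibrant; in particular, the unique map $\emptyset \cto B$ from the initial simplicial set is a cofibration for any $B$. Next, given a Kan fibration $p\colon E \fto B$ with $\kappa$-small fibers, I would note that $p$ represents an element of $\FF^\kappa(B)$, i.e., a pseudonatural transformation $\cE(-,B) \to \FF^\kappa$. The corresponding transformation $\cE(-,\emptyset) \to \cE(-,U)$ is essentially vacuous (there is a unique empty map to $U$), so the lifting problem
\[
  \begin{tikzcd}
    \cE(-,\emptyset) \arrow[d] \arrow[r] & \cE(-,U) \arrow[d, "\pi"] \\
    \cE(-,B) \arrow[ur, dashed, "k"'] \arrow[r, "p"'] & \FF^\kappa
  \end{tikzcd}
\]
posed by the universal property of $\pi$ admits a solution $k = \name{p} \colon B \to U$.

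Finally, I would unpack what this solution provides: by the equivalent formulation \eqref{eq:realignment-property} of the universal fibration property, specialized to $A = \emptyset$, a solution to such a lifting problem is precisely a classifying pullback square of $p$ along $\pi$. That is, the transformation $k$ witnesses that $p$ is (isomorphic to) the pullback of $\pi$ along $\name{p}$, which is the required square. The only subtlety worth flagging is the role of locality/Lemma \ref{lem:locality}, which guarantees that the classifying map into $V$ of any Kan fibration actually factors through the subobject $U \hookrightarrow V$, so the resulting classifier lands in the universe of Kan fibrations rather than merely small maps; but this has already been folded into the preceding corollary, so no new work is required here.
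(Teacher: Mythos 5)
Your proposal is correct and follows exactly the same route as the paper: the paper derives this corollary by noting that all simplicial sets are cofibrant and then specializing the realignment property (Remark \ref{rmk:weak-classification}) to $A = \emptyset$, which is precisely what you do. The additional remark about Lemma \ref{lem:locality} is accurate context but, as you note, already absorbed into the preceding corollary.
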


\subsection{Fibrancy of the universe}\label{ssec:fibrancy}

In the simplicial model of univalent foundations, types are meant to correspond to Kan complexes. One use of realignment is in proving that $U$ is a Kan complex. Recall that Kan complexes of simplicial sets are characterized by right lifting against the \textbf{acyclic cofibrations}, monomorphisms that also define weak homotopy equivalences
\[ 
  \begin{tikzcd} A \arrow[r, "h"]\arrow[d, tcofarrow, "j"'] & U \\ B \arrow[ur, dashed]
  \end{tikzcd}
\] 

If maps into $U$ corresponded bijectively to small fibrations over the domain, this would unravel to the following: 

\begin{defn}\label{defn:FEP} A model category $\cE$ satisfies the \textbf{fibration extension property} if for all small fibrations $q \colon D \fto A$ and all trivial cofibrations $j \colon A \cwto B$, there exists a small fibration $p \colon E \fto B$ that pulls back along $j$ to $q$.
  \[
    \begin{tikzcd} D \arrow[d, two heads, "q"'] \arrow[r, dashed] \arrow[dr, phantom, "\lrcorner" very near start] & E \arrow[d, dashed, two heads, "p"] \\ A \arrow[r, tcofarrow, "j"'] & B
    \end{tikzcd}
  \]
\end{defn}

Note that Definition \ref{defn:FEP} makes no reference to a particular universe $U$. Nevertheless, when $\pi \colon \tilde{U} \fto U$ is a universal fibration in the sense of Definition \ref{defn:classifying-fibration}, satisfying realignment \eqref{eq:realignment-property}, the fibration extension property encodes the fibrancy of $U$. 

\begin{lem}\label{lem:FEP-fibrant} Suppose $\pi \colon \tilde{U} \to U$ is a universal fibration in a model category $\cE$ with all objects cofibrant. Then $U$ is fibrant if and only if $\cE$ satisfies the fibration extension property.
\end{lem}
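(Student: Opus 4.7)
The plan is to prove each direction by converting between lifting problems against $U \to \ast$ and instances of the fibration extension property, leveraging the classification of small fibrations by maps into $U$ established for universal fibrations.

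For the forward direction, assuming $U$ is fibrant, I would start from a small fibration $q \colon D \fto A$ together with a trivial cofibration $j \colon A \cwto B$, and first invoke Remark \ref{rmk:weak-classification} to obtain a classifying pullback square for $q$ with lower horizontal $\name{q} \colon A \to U$. Fibrancy of $U$ then provides an extension $k \colon B \to U$ with $k \cdot j = \name{q}$, and I would define $p \coloneqq k^* \pi \colon E \fto B$. The pasting law for pullbacks, applied to the identity $(k \cdot j)^* \pi = \name{q}^* \pi$, produces a pullback square of the form required by Definition \ref{defn:FEP}.

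For the reverse direction, assuming the fibration extension property, I would take $h \colon A \to U$ and a trivial cofibration $j \colon A \cwto B$ and aim to produce an extension $k \colon B \to U$ with $k \cdot j = h$. First, pulling $\pi$ back along $h$ yields a small fibration $q \colon D = h^*\tilde{U} \fto A$ together with its canonical classifying pullback square. The fibration extension property then supplies a small fibration $p \colon E \fto B$ and a pullback square exhibiting $D$ as $j^*E$. This assembles exactly the data appearing in the realignment square \eqref{eq:realignment-property}: the back pullback classifying $q$ via $h$, the cofibration $j$, and the front pullback $(q, j, p)$. Applying realignment for the universal fibration $\pi$ then produces the desired $k \colon B \to U$ with $k \cdot j = h$, proving $U$ is fibrant.

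The main obstacle, and the reason the hypothesis that $\pi$ is a universal fibration (not merely a weak classifier) is essential, is the strictness of the extension in the reverse direction: one genuinely needs $k \cdot j = h$ on the nose, and it is precisely the realignment property that upgrades the isomorphism-up-to-pullback data supplied by the fibration extension property to a strict extension of the given map $h \colon A \to U$.
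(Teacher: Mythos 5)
Your proof is correct and follows essentially the same approach as the paper's: for fibrancy implies FEP, classify $q$ via weak classification and extend its classifying map along $j$ using fibrancy of $U$, then pull back $\pi$; for FEP implies fibrancy, pull $\pi$ back along $h$, extend that fibration via FEP, and upgrade the resulting weak classification to a strict extension $k \cdot j = h$ via realignment. The only difference is the order in which you present the two implications, and your closing remark about why realignment (rather than mere weak classification) is indispensable in the second direction accurately identifies the crux of the argument.
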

\begin{proof} Suppose $\cE$ satisfies the fibration extension property and consider a lifting problem:
  \[ 
    \begin{tikzcd}
      A \arrow[r, "h"] \arrow[d, "j"', tcofarrow] & U \\ B \arrow[ur, dashed, "k"']
    \end{tikzcd}
  \]
Define a small fibration $q \colon D \fto A$ by pulling back $\pi$ along $h$. Then use the fibration extension property to extend this to a small fibration $p \colon E \fto B$ that pulls back along $j$ to $q$. By realignment, it follows that the classifying map $h$ extends along $j$ to a classifying map $k$ for $p$ so that $k \cdot j = h$, solving the lifting problem. This proves that the fibration extension property implies fibrancy of the universe.

Conversely, if $U$ is fibrant and we are given a small fibration $q \colon D \fto A$, we may use Remark \ref{rmk:weak-classification} to choose a classifying pullback square. In particular, this choice defines a classifying map and thus a lifting problem
\[ 
    \begin{tikzcd}
      A \arrow[r, "{\name{q}}"] \arrow[d, "j"', tcofarrow] & U \\ B \arrow[ur, dashed, "\name{p}"']
    \end{tikzcd}
  \]
  which admits a solution. The pullback of $\pi$ along this map, displayed below-right, defines a small fibration over $B$. The pullback square for $q$ factors through the one for $p$ defining the desired extension square:
  \[
    \begin{tikzcd} D \arrow[rr, bend left]\arrow[d, two heads, "q"'] \arrow[r, dashed] \arrow[dr, phantom, "\lrcorner" very near start] & E \arrow[d, dashed, two heads, "p"'] \arrow[r] \arrow[dr, phantom, "\lrcorner" very near start] & \tilde{U} \arrow[d, two heads, "\pi"] \\ A \arrow[r, utcofarrow, "j"] \arrow[rr, bend right, "\name{q}"'] & B \arrow[r, "\name{p}"] & U
    \end{tikzcd} \qedhere
  \]
\end{proof}

Thus, to show that $U$ is a Kan complex, we need only verify the fibration extension property in the category of simplicial sets. Since the Kan complexes are detected by right lifting against the simplicial horn inclusions $\Lambda^n_k \cwto \Delta^n$, it suffices to consider extensions of fibrations along such maps, as the proof just given reveals.

\begin{thm} The category of simplicial sets satisfies the fibration extension property.
\end{thm}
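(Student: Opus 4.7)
The plan is to combine the classification machinery already developed with the combinatorial specificity of horn inclusions. First, I would reduce to the case where $j$ is a horn inclusion $\Lambda^n_k \cwto \Delta^n$. The fibration extension property is closed under retracts, coproducts, pushouts, and transfinite composition (extensions glue by the universal property of pushouts/colimits and the pullback-stability of small Kan fibrations), and by the small object argument these operations generate every trivial cofibration from horn inclusions. Alternatively, once horn-FEP is in hand, Lemma \ref{lem:FEP-fibrant} upgrades it to the full statement: horn-FEP is exactly what is needed for $U$ to have right lifting against horn inclusions, hence to be a Kan complex, hence (by Lemma \ref{lem:FEP-fibrant} again) to yield FEP against all trivial cofibrations.

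Next, given a small Kan fibration $q \colon D \fto \Lambda^n_k$, I would aim to produce a small Kan fibration $p \colon E \fto \Delta^n$ with $j^* p = q$ on the nose. The natural strategy is a classification–extension–realignment scheme. By Corollary \ref{cor:weak-classifying}, $q$ is classified by some $h \colon \Lambda^n_k \to U$. Invoking Quillen's theory of minimal fibrations, replace $q$ by a fiber homotopy equivalent minimal Kan fibration $q^\flat \colon D^\flat \fto \Lambda^n_k$; since $\Lambda^n_k$ is geometrically contractible, minimality forces an isomorphism $D^\flat \cong F \times \Lambda^n_k$ over $\Lambda^n_k$ for some Kan complex $F$. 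This manifestly extends to the product projection $F \times \Delta^n \to \Delta^n$, a small Kan fibration over $\Delta^n$ whose pullback along $j$ is literally $D^\flat$. Realignment (Lemma \ref{lem:Hofmann-Streicher-realignment}), transferred from the Hofmann--Streicher universe to $\pi$ via the pullback square \eqref{eq:universal-Kan-fibration}, then rectifies the classifying map of this product projection to agree with $\name{q^\flat}$ on $\Lambda^n_k$, yielding an extension of $q^\flat$ on the nose.

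The main obstacle is the final rectification from $q^\flat$ back to $q$. Minimal fibration replacement provides only a fiber homotopy equivalence $D \simeq D^\flat$, not an isomorphism, whereas realignment demands a strict pullback square. Closing this gap is what makes the argument genuinely delicate. Two options present themselves: (i) bypass minimal fibrations altogether and construct $E$ over $\Delta^n$ directly by Reedy-style induction, specifying fibers and face/degeneracy operations for the one new non-degenerate $n$-simplex using horn-filling applied to the boundary data already supplied by $D$; or (ii) package fiber homotopy equivalences as identifications in $U$, which implicitly assumes the univalence of $\pi$ established only later. I would expect the author to prefer route (i), since it is self-contained at this point in the development and is the route pioneered in \cite{klv}: the explicit combinatorial construction, while tedious, is algorithmically straightforward once the universe $U$ and the realignment property are in hand to rigidify choices at each stage.
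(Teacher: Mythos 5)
Your reduction to horn inclusions and the invocation of Quillen's minimal fibration theory over the contractible base $\Lambda^n_k$ are the right ideas, but you miss the step that closes the gap you yourself identify, and neither of your two proposed fixes is what the paper does. The crucial observation is that Quillen's result does not merely \emph{replace} $q$ by a fiber-homotopy-equivalent minimal fibration; it \emph{factors} $q$ as a trivial fibration followed by a minimal fibration, $D \fwto D^\flat \to \Lambda^n_k$, with $D^\flat \cong F \times \Lambda^n_k$ since the base is contractible. Extending the minimal (product) factor to $F \times \Delta^n \to \Delta^n$ is immediate, and the remaining task is to extend the \emph{trivial fibration} $D \fwto F \times \Lambda^n_k$ across the monomorphism $i \colon F \times \Lambda^n_k \cto F \times \Delta^n$. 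The paper does this by pushforward: form $\Pi_i D \to F \times \Delta^n$ using local cartesian closure of simplicial sets. Because monomorphisms are pullback-stable, pushforward along a monomorphism preserves trivial fibrations; and because $i^*\Sigma_i \cong \mathrm{id}$ when $i$ is monic, taking right adjoints gives $i^*\Pi_i \cong \mathrm{id}$, so $\Pi_i D$ restricts back to $D$ over $F \times \Lambda^n_k$. The composite of the two extensions is the desired small Kan fibration over $\Delta^n$ pulling back to $q$. A minor additional point: the realignment step you interpose after constructing $F \times \Delta^n \to \Delta^n$ is unnecessary — that product literally restricts to $D^\flat$ over $\Lambda^n_k$, so no rectification of classifying maps is needed there; realignment enters only afterwards, in Lemma~\ref{lem:FEP-fibrant}, to convert the fibration extension property into fibrancy of $U$.
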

\begin{proof}
By Lemma \ref{lem:FEP-fibrant}, it suffices to extend a Kan fibration over a horn $\Lambda^n_k$ to a Kan fibration over the simplex $\Delta^n$ in such a way that the latter pulls back to the former. To do so, we use a result of Quillen \cite{quillen-minimal} to factor $q \colon E \fto \Lambda^n_k$ as a trivial fibration followed by a minimal fibration. Since the base of the minimal fibration is contractible, it is isomorphic to a projection map $F \times \Lambda^n_k \to \Lambda^n_k$ for some Kan complex $F$ and thus extends as displayed:
\[
  \begin{tikzcd} E \arrow[d, tfibarrow] \arrow[r, tcofdashed] \arrow[dr, phantom, "\lrcorner" very near start] & E' \arrow[d, tfibdashed] \\ F \times \Lambda^n_k \arrow[d, two heads] \arrow[r, tcofarrow] \arrow[dr, phantom, "\lrcorner" very near start] & F \times \Delta^n \arrow[d, two heads] \\ \Lambda^n_k \arrow[r, tcofarrow] & \Delta^n
  \end{tikzcd}\]
Thus it remains only to extend the trivial fibration, but this can be achieved defining $E' \fwto F \times \Delta^n$ to be the pushforward of $E \fwto F \times \Lambda^n_k$ along the monomorphism $F \times \Lambda^n_k \cto F \times \Delta^n$. Since monomorphisms are stable under pullback, the trivial fibrations are stable under pushforward. Finally, if a map is post-composed with a monomorphism and then pulled back along that monomorphism, the result is isomorphic to the map we started with; thus, taking right adjoints, the pushforward along a monomorphism pulls back along that monomorphism to a map isomorphic to the one we started with, which identifies $E$ as the pullback of $E'$.
\end{proof}

\subsection{Internal universes}

By Construction \ref{cons:universe-category}, a universe $U$ in $\cE$ gives rise to a contextual category $\cE_{/U}$. An internal universe structure on $U$ will provide a universe, in the sense of Definition \ref{defn:universe}, inside the category $\cE_{/U}$. 

\begin{defn} An \textbf{internal universe} in $U$ consists of arrows $u_0 \colon 1 \to U$ and $i \colon U_0 \to U$ where $U_0$ is defined by the canonical pullback
  \[
    \begin{tikzcd}
      U_0 \arrow[r] \arrow[d, "p_0"'] \arrow[dr, phantom, "\lrcorner" very near start] & \tilde{U} \arrow[d, "\pi"] \\ 1 \arrow[r, "u_0"'] & U
    \end{tikzcd}
  \]
\end{defn}

Note that the map $u_0$ defines an object $(1;u_0) \in \cE_{U}$ in the contextual category built from the universe $U$. At the same time, the canonical pullback along the map $i \colon U_0 \to U$ in $\cE$ defines a square
\[
  \begin{tikzcd} \tilde{U}_0 \arrow[dr, phantom, "\lrcorner" very near start] \arrow[r] \arrow[d, "\pi_0"'] & \tilde{U} \arrow[d, "\pi"] \\ U_0 \arrow[r, "i"'] & U
  \end{tikzcd}
\]
via which the universe $U$ induces a universe structure on $U_0$ as an object in $\cE$. We say that the internal universe $U_0$ is \textbf{closed under identity types} if it carries an identity structure, as in Definition \ref{defn:id-structure-universe}, commuting with $i$. Theorem \ref{thm:id-structure} proves that an identity  structure on $U$ induces an identity type structure on $\cE_U$. Moreover:

\begin{thm}[{\cite{voevodsky-ID}}] Suppose $\cE$ has a universe $U$ with an identity structure and an internal universe $U_0$ in $U$ closed under identity types. Then $\cE_U$ has an identity type structure as well as a universe \`{a} la Tarski that is closed under identity types.
\end{thm}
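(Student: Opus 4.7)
The first clause of the conclusion is immediate from Theorem~\ref{thm:id-structure}: the identity structure on $U$ produces an identity type structure on the contextual category $\cE_U$ with no further work. The interesting content is the construction of a Tarski universe inside $\cE_U$ together with its closure under identity types. My plan is to read the data $(u_0, i)$ of the internal universe directly as the two layers of a Tarski universe: the morphism $u_0 \colon 1 \to U$ produces the object $(1; u_0) \in \cE_U$, which plays the role of the universe type $\mathcal{U}$ in the empty context, and the morphism $i \colon U_0 \to U$, viewed as a map from the underlying object of $(1; u_0)$ into $U$, yields the context extension $(1; u_0, i) \in \cE_U$, which plays the role of the generic decoded type $\mathsf{El}$.

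To verify that this really is a universe in $\cE_U$ in the sense of Definition~\ref{defn:universe}, I would check that the decoding operation is well-defined and strictly stable under substitution. A ``term of the universe'' in context $\Gamma \in \cE_U$ is by definition a section $\Gamma \to (1; u_0)$ of the relevant display map; by construction of $\cE_U$ this amounts to a map $a \colon \Gamma \to U_0$ in $\cE$. The composite $i \cdot a \colon \Gamma \to U$ then names a type in context $\Gamma$, whose display map, by the defining pullback of $U_0$ and the pasting lemma, coincides with the pullback of $\pi_0 \colon \tilde{U}_0 \to U_0$ along $a$. Strict stability under $f \colon \Delta \to \Gamma$ follows because the pulled-back code $i \cdot a \cdot f$ equals the decoding of the substituted term $a \cdot f$, and all the relevant pullback squares are the canonical ones from Construction~\ref{cons:universe-category}.

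For closure under identity types, I would unpack the assumption that the identity structure on $U_0$ commutes with $i$: the map $\Id_0 \colon \tilde{U}_0 \times_{U_0} \tilde{U}_0 \to U_0$ satisfies $i \cdot \Id_0 = \Id \cdot (i \times_\pi i)$ on the induced fiber product, and the chosen reflexivity lift $r_0 \colon \tilde{U}_0 \to \Id_0^*\tilde{U}_0$ is compatible with $r$. Given a term $a \colon \Gamma \to (1; u_0)$ coding a small type $A$, the fibered identity type $\Id_A$ produced by the identity type structure on $\cE_U$ is classified (via $\pi$) by $\Id \cdot (i \cdot a \times_\Gamma i \cdot a)$. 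The commutation hypothesis rewrites this as $i \cdot (\Id_0 \cdot (a \times_\Gamma a))$, and hence $\Id_A$ is itself coded by the term $\Id_0 \cdot (a \times_\Gamma a) \colon \Gamma.A.A \to U_0$, as required for closure. The same reasoning applied to the reflexivity section and to the internal lifting operation $J$ confirms that the entire identity type structure restricts to types named by $\mathcal{U}$.

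The main obstacle, as always when transferring strict algebraic structure across universes, is the strictness of all equalities rather than their mere existence up to canonical isomorphism. The construction of $\cE_U$'s identity type structure in Theorem~\ref{thm:id-structure} uses the specified pullback squares from the universe structure on $U$, and these must be shown to agree on the nose with the specified pullback squares obtained by decoding small types through $i$. This reduces to the definitional commutativity $i \cdot \Id_0 = \Id \cdot (i \times_\pi i)$ plus the assertion that $r_0$ lies over $r$ through the induced square of fibered diagonals, both of which are precisely the content of ``the internal universe $U_0$ is closed under identity types.'' Once this diagram of strict identifications is in hand, no further coherence is required, and the Tarski universe inherits the identity type structure by pullback along $i$.
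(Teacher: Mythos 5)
The paper does not actually reproduce a proof of this theorem---it is stated as a citation to Voevodsky's article \cite{voevodsky-ID}---so there is no in-paper argument to compare against. Evaluated on its own terms, your reconstruction is sound. Reading the internal universe data as a Tarski universe inside $\cE_U$, with $(1;u_0)$ playing the role of $\mathcal{U}$ and $(1;u_0,i)$ the role of the generic decoded family, is the correct identification, and the essential content of ``closed under identity types'' is exactly what you isolate: strict commutativity of $\Id_0$ with $\Id$ through $i$, so that the code $\Id\cdot(\text{induced map})\cdot(a\times_\Gamma a)$ naming $\Id_{\mathsf{El}(a)}$ equals $i\cdot\Id_0\cdot(a\times_\Gamma a)$ and hence lands in the image of $i$.

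One small imprecision in your middle paragraph is worth flagging: you do not need the pasting lemma or an identification with the pullback of $\pi_0$ along $a$ in $\cE$, and in fact invoking it is slightly misleading, since the chosen pullbacks of the universe structure in $\cE$ are not assumed to be strictly functorial, so ``coincides'' there only delivers an isomorphism. The on-the-nose equalities you need come for free if you stay inside $\cE_U$, where by Construction \ref{cons:universe-category} the canonical pullback of the type coded by $i$ along the section $a$ is defined to be the context extension with code $i\cdot a$; strict stability under a further substitution $f$ is then just associativity of composition, $i\cdot(a\cdot f)=(i\cdot a)\cdot f$, with no appeal to pullback coherence in $\cE$. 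Apart from this, the argument is the expected one, and your closing observation---that the ``commutes with $i$'' hypothesis is precisely what converts isomorphism-level compatibility into the strict equalities the contextual-category formalism demands---correctly identifies where the hypothesis does its work.
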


In the category of simplicial sets, we constructed a universal $\kappa$-small fibration $\pi_\kappa \colon \tilde{U}_\kappa \to U_\kappa$ for any regular cardinal $\kappa$. Now suppose $\kappa$ is an inaccessible cardinal and $\lambda < \kappa$ is also inaccessible. Since $\lambda < \kappa$, $U_\lambda$ is itself $\kappa$-small and so is representable as a pullback along some $u_\lambda \colon 1 \to U_\kappa$ in the category of simplicial sets
\[
  \begin{tikzcd} U_\lambda \arrow[d, "p_\lambda"'] \arrow[r] \arrow[dr, phantom, "\lrcorner" very near start] & \tilde{U}_\kappa \arrow[d, "\pi_\kappa"] \\ 1 \arrow[r, "u_\lambda"'] & U_\kappa
  \end{tikzcd}
\]
By construction of these universes, there is a natural inclusion $i \colon U_\lambda \to U_\kappa$. In summary:

\begin{thm}[{\cite[2.3.4]{klv}}] The universe $U_\kappa$ carries an identity type structure, while $U_\lambda$ give an internal universe that is closed under identity types.
\end{thm}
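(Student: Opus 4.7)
The plan is to construct the identity structure on $U_\kappa$ (in the sense of Definition \ref{defn:id-structure-universe}) by forming the relative path space factorization of the universal fibration $\pi_\kappa \colon \tilde{U}_\kappa \fto U_\kappa$ in the slice $\sSet_{/U_\kappa}$, classifying the resulting small fibration via Corollary \ref{cor:weak-classifying}, and producing $J$ from the simplicial enrichment of the Kan--Quillen model structure.

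First, following the construction in \eqref{eq:relative-path-space-factorization}, I would form the fibered path space factorization
\[
    \begin{tikzcd}
        \tilde{U}_\kappa \arrow[r, tcofarrow, "r"] &
        \Delta^1 \pitchfork_{U_\kappa} \tilde{U}_\kappa \arrow[r, two heads, "{(e_0,e_1)}"] &
        \tilde{U}_\kappa \times_{U_\kappa} \tilde{U}_\kappa
    \end{tikzcd}
\]
of the fibered diagonal of $\pi_\kappa$. Applying the enrichment condition of Definition \ref{defn:simp-model-cat} to $\partial\Delta^1 \cto \Delta^1$ and to $\pi_\kappa$ in the slice over $U_\kappa$, the left map is a trivial cofibration and the right map is a Kan fibration. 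The key observation is that the middle object is $\kappa$-small over $U_\kappa$: its fibers are $\Delta^1$-paths inside the $\kappa$-small fibers of $\pi_\kappa$, and since $\Delta^1$ is finite these remain $\kappa$-small. Hence $(e_0,e_1)$ has $\kappa$-small fibers and is classified, by Corollary \ref{cor:weak-classifying}, by a pullback square yielding the required map $\Id_\kappa \colon \tilde{U}_\kappa \times_{U_\kappa} \tilde{U}_\kappa \to U_\kappa$. Choosing the canonical pullback identifies the middle object with $\Id_\kappa^*\tilde{U}_\kappa$ and carries the required lift $r$ of the relative diagonal.

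For the internal lifting operation $J$ I would invoke Remark \ref{rmk:sm7}: since $r$ is a trivial cofibration and $\pi_\kappa \times U_\kappa$ is a fibration in $\sSet_{/U_\kappa}$, the Leibniz cotensor is a trivial fibration in the slice, which admits a section because every object is cofibrant. That section is the internal $J$ required by Definition \ref{defn:id-structure-universe}.

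For the closure statement, the same recipe applied to $\pi_\lambda$ produces an identity structure on $U_\lambda$ with classifying map $\Id_\lambda$. To see that it commutes with $i \colon U_\lambda \to U_\kappa$, I would observe that the fibered cotensor is a simplicially enriched right adjoint (as noted after \eqref{eq:relative-path-space-factorization}) and so is stable under pullback: the $\lambda$-level path object is the pullback along $i$ of the $\kappa$-level one, and in particular it remains $\lambda$-small because $\Delta^1$ is finite and $\lambda$ is inaccessible. The realignment property of the universal Kan fibration (Definition \ref{defn:classifying-fibration}) then allows one to select $\Id_\lambda$ on the nose so that its composition with $i$ agrees with $\Id_\kappa$ on the pulled-back square, and similarly for the reflexivity data and the $J$-operations. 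The principal obstacle I anticipate is precisely this coherence bookkeeping: the strict equalities demanded by Definition \ref{defn:id-structure-universe}—for the factorization, the $\refl$-lift, and the $J$-operation at both the $\lambda$- and $\kappa$-levels—must be simultaneously arranged to match under $i$, and realignment must be deployed in a disciplined order so that later adjustments do not destroy equalities established earlier.
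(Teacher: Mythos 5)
The paper does not actually prove this theorem in the text; it cites Kapulkin--Lumsdaine \cite[2.3.4]{klv} and only sets up the scaffolding (Definition \ref{defn:id-structure-universe}, the relative path space of \eqref{eq:relative-path-space-factorization}, and the realignment property). That said, your approach---taking the fibered path object of $\pi_\kappa$ in the slice over $U_\kappa$, classifying the resulting small fibration, and producing $J$ from the enriched model structure---is the standard one and consistent with the cited source, so the outline is sound.

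Two places require more care. First, for the internal lifting operation $J$: Remark \ref{rmk:sm7} concerns the Leibniz exponential in $\sSet$ itself, whereas the lifting operation required by Definition \ref{defn:id-structure-universe} lives in the slice $\sSet_{/U_\kappa}$ and uses the internal hom of the slice, i.e., the pushforward $\Pi$ along the structure maps of $\Id_\kappa^*\tilde{U}_\kappa$ and $\tilde{U}_\kappa$ to $U_\kappa$. To conclude that this Leibniz hom of a trivial cofibration against a fibration is a trivial fibration, you need the Frobenius-type facts stated in \S\ref{ssec:univalence}: pushforward along any monomorphism preserves trivial fibrations (because cofibrations are monos), and pushforward along a fibration preserves fibrations (by right properness plus cofibrations being monos). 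Those hypotheses, rather than Remark \ref{rmk:sm7}, are what make the slice a cartesian closed model category and give the trivial fibration whose section is $J$. Relatedly, your smallness justification ``since $\Delta^1$ is finite'' is informal---$\Delta^1$ is finitely generated, not finite---but the conclusion holds since $\kappa$ is regular (in fact inaccessible here).

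Second, for closure of $U_\lambda$ under identity types, realignment is the right tool but the order you sketch runs the wrong way. You cannot in general ``select $\Id_\lambda$ so that its composition with $i$ agrees with a previously chosen $\Id_\kappa$'': although the restriction of $\Id_\kappa$ along the monomorphism $\tilde{U}_\lambda \times_{U_\lambda} \tilde{U}_\lambda \cto \tilde{U}_\kappa \times_{U_\kappa} \tilde{U}_\kappa$ classifies a $\lambda$-small fibration up to isomorphism, it need not literally land in the subobject $U_\lambda \subset U_\kappa$, since the Hofmann--Streicher classification assigns actual $\set$-valued presheaves to simplices and the chosen representatives may fall outside $\set_\lambda$. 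Instead, classify the $\lambda$-path space by $\Id_\lambda$ first, post-compose with $i$ to obtain a $\kappa$-classification, and then use realignment (Definition \ref{defn:classifying-fibration}) along that monomorphism to extend to an $\Id_\kappa$ classifying the $\kappa$-path space with $\Id_\kappa$ restricting to $i\cdot\Id_\lambda$ on the nose. The same ordering resolves the compatibility of the $\refl$ and $J$ data with $i$ as well.
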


As discussed in \cite[2.3.4]{klv}, the universe $U_\kappa$ also bears structure corresponding to the other logical rules and $U_\lambda$ is closed under the corresponding logical structure.

\subsection{Univalence}\label{ssec:univalence}

We now explain the interpretation of the univalence axiom in a model category $\cE$,  for instance the model category of simplicial sets that presents the $\infty$-topos of spaces. Our exposition follows \cite{shulman-reedy}, which explains univalence as follows:

\begin{quote}
  The univalence axiom, when interpreted in a model category, is a statement about a ``universe object'' $U$, which is fibrant and comes equipped with a fibration $\pi \colon \tilde{U} \twoheadrightarrow U$ that is generic, in the sense that any fibration with ``small fibers'' is a pullback of $\pi$. \ldots In homotopy theory, it would be natural to ask for the stronger property that $U$  is a classifying space for small fibrations, i.e. that homotopy classes of maps $A \to U$
  are in bijection with (rather than merely surjecting onto) equivalence classes of
  small fibrations over $A$. The univalence axiom is a further strengthening of this: it
  says that the path space of $U$ is equivalent to the ``universal space of equivalences''
  between fibers of $\pi$ \ldots In particular, therefore, if two
  pullbacks of $\pi$ are equivalent, then their classifying maps are homotopic. \cite[84]{shulman-reedy}
\end{quote}

The univalence axiom concerns the object $\Eq(\tilde{U}) \fto U \times U$ of equivalences defined for the universal fibration $\pi \colon \tilde{U} \to U$. The idea is that a generalized element 
\[
  \begin{tikzcd}  & \Eq(\tilde{U}) \arrow[d, two heads] \\ B \arrow[ur, dashed, "\name{\epsilon}"] \arrow[r, "{(\name{p_1},\name{p_2})}"']& U \times U
  \end{tikzcd} \]
 should represent a fibered equivalence $\epsilon \colon E_1 \simeq E_2$ between the fibrations over $B$ classified by these maps.

We explain how to construct the simplicial set $\Eq(\tilde{U})$ via a construction that makes sense in any simplicially locally cartesian closed, simplicial model category $\cE$ that is right proper and  whose cofibrations are the monomorphisms (implying that the model category is also left proper). Recall from \eqref{eq:lcc}, that any morphism $f$ in a locally cartesian closed category gives rise to an adjoint triple $\Sigma_f \dashv f^* \dashv \Pi_f$ defined respectively by composition, pullback, and pushforward.  Since the cofibrations are the monomorphisms, pushforward along any map preserves trivial fibrations. Since the model category is right proper and the cofibrations are the monomorphisms, pushforward along a fibration also preserves fibrations \cite{GS}. Since the model category is simplicial, we may form the \textbf{fibered path space} factorization \eqref{eq:relative-path-space-factorization} of any fibration, factoring the relative diagonal through the fibered path space. As noted there, this construction has the following properties:

\begin{prop} For any fibration $p \colon E \fto B$ in a simplicial model category $\cE$ in which the cofibrations are the monomorphisms, the natural maps
  \begin{equation}\label{eq:rel-path-space-revisited}
    \begin{tikzcd}[sep=tiny]
      E \arrow[rrr, tail] \arrow[dddd, two heads, "p"'] \arrow[dr, tcofdashed] & & & E^{\Delta^1} \arrow[rrr] \arrow[dddd, two heads, "p^{\Delta^{1}}"] & & & E \times E \arrow[dddd, two heads, "p \times p"] \\ &  P_BE \arrow[dddl, two heads, dotted] \arrow[dddr, phantom, "\lrcorner" very near start]\arrow[urr, dotted] \arrow[dr, two heads, dashed ]
      \\ & & E \times_B E \arrow[ddll, two heads, dotted] \arrow[uurrrr, dotted]  \arrow[ddr, phantom, "\lrcorner" very near start]\\ ~
      \\ B \arrow[rrr, tail] & & ~ & B^{\Delta^1} \arrow[rrr] & & & B \times B
  \end{tikzcd} 
  \end{equation}
  give a factorization of the fibered diagonal map $(1,1) \colon E \to E \times_B E$ over $B$ as a trivial cofibration followed by a fibration. Moreover, when $\cE$ is simplicially cartesian closed, this construction is weakly stable over $B$, meaning that the pullback along any $f \colon A \to B$ is again such a factorization.
\end{prop}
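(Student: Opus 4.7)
The plan is to recognise $P_BE$ as the simplicial cotensor of the object $(E,p) \in \cE/B$ with $\Delta^1$, so that the entire factorization becomes the standard path-space construction applied internally in the slice. Since $\cE$ is simplicially locally cartesian closed, $\cE/B$ inherits a tensored and cotensored simplicial enrichment from $\cE$, and a direct check of universal properties shows that for any simplicial set $K$,
\[
  (E,p)^K \;\cong\; \bigl(E^K \times_{B^K} B \longrightarrow B\bigr),
\]
where $B \to B^K$ is the cotensor of the unique map $K \to \Delta^0$. Specialising to $K = \Delta^1$ and $K = \partial\Delta^1$ recovers $P_BE$ and $E \times_B E$ respectively, and the two maps in the factorization $E \to P_BE \to E \times_B E$ arise by applying the cotensor to the constant map $\Delta^1 \to \Delta^0$ and to the boundary inclusion $\partial\Delta^1 \cto \Delta^1$.

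To verify that this is a (trivial cofibration, fibration) factorization, first note that $(E,p)$ is fibrant in $\cE/B$ precisely because $p$ is a fibration in $\cE$. Simplicial local cartesian closure of $\cE$ lets one transfer SM7 from $\cE$ to $\cE/B$. Applied to the cofibration $\partial\Delta^1 \cto \Delta^1$ and the fibrant object $(E,p)$, this shows that $P_BE \to E \times_B E$ is a fibration in $\cE/B$, hence in $\cE$. Applied instead to either trivial cofibration $\Delta^0 \cwto \Delta^1$, it shows that the corresponding endpoint evaluation $P_BE \to E$ is a trivial fibration. The constant-path inclusion $E \to P_BE$ is a section of this trivial fibration and is therefore a split monomorphism; by the hypothesis that cofibrations are monomorphisms it is a cofibration, and the 2-of-3 property applied to the identity composite $E \to P_BE \to E$ makes it a weak equivalence, so altogether a trivial cofibration.

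For weak stability, given $f \colon A \to B$ the pullback functor $f^* \colon \cE/B \to \cE/A$ is a simplicial right adjoint, thanks again to simplicial local cartesian closure of $\cE$. Simplicial right adjoints preserve cotensors, so
\[
  f^*(P_BE) \;\cong\; P_A(f^*E), \qquad f^*(E \times_B E) \;\cong\; f^*E \times_A f^*E,
\]
and the pullback of the entire factorization coincides with the path-space factorization of $(f^*E, f^*p) \in \cE/A$. Since $f^*$ also preserves fibrations and monomorphisms, the resulting maps are again of the required types.

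The main obstacle is purely bookkeeping: verifying the cotensor formula $(E,p)^K \cong E^K \times_{B^K} B$ and confirming that SM7 for $\cE$ genuinely lifts to SM7 for the slice $\cE/B$ with its inherited enrichment. Once those two facts are in hand, the entire proof is a formal consequence of SM7 together with the fact that pullback is a simplicially enriched right adjoint.
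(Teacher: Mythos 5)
Your proof is correct and follows essentially the same route as the paper's: both identify the endpoint evaluations as (pullbacks of) Leibniz cotensors of $\partial\Delta^1 \cto \Delta^1$ and $\Delta^0 \cwto \Delta^1$ against the fibration $p$, apply the SM7 condition to get the (trivial) fibration properties, note that the constant-path inclusion is a split monomorphism (hence a cofibration) and a weak equivalence by 2-of-3, and obtain weak stability from the fact that $f^*$ is a simplicial right adjoint preserving cotensors in the slice. Your phrasing is somewhat more systematic in working inside $\cE/B$ from the outset and in making the cotensor formula $(E,p)^K \cong E^K \times_{B^K} B$ explicit, but this is precisely the ``pullback of the Leibniz cotensor along $B \to B^K$'' observation the paper uses, so the content is the same.
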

\begin{proof} 
  Since $\cE$ is a simplicial model category and $p \colon E \fto B$ is a fibration, the maps depicted as fibrations in \eqref{eq:rel-path-space-revisited} are all fibrations. In particular, the endpoint evaluation map $(e_0,e_1) \colon  P_BE \twoheadrightarrow E\times_B E$ is a pullback of the Leibniz cotensor of the cofibration $\partial\Delta^1 \rightarrowtail \Delta^1$ and the fibration $p$ by Definition \ref{defn:simp-model-cat}. Similarly, the single endpoint evaluation maps $e_0, e_1 \colon P_BE \fwto E$ are pullbacks of the Leibniz cotensors of the trivial cofibrations $0,1 \colon \Delta^0 \cwto \Delta^1$ and the fibration $p$ and so are trivial fibrations. 

  By construction, the map $r \colon E \rightarrowtail P_BE$ is a section of both $e_0$ and $e_1$. Thus, it is a monomorphism and a weak equivalence by the 2-of-3 property. For the pullback stability of this construction, it is helpful to note that $P_BE \cong \Delta^1 \pitchfork_B E$ is the simplicial cotensor of $p \colon E \fto B$ by $\Delta^1$ in the slice $\cE_{/B}$. Simplicial cartesian closure demands that the adjunctions \eqref{eq:lcc} are simplicially enriched, and in particular that the right adjoint $f^* \colon \cE_{/B} \to \cE_{/A}$ preserves these simplicial cotensors.
\end{proof}

\begin{dig}
  In homotopy type theory, the object of equivalences between types in the universe $\univ$ is
  \begin{align*} \Eq(\univ) &\coloneqq \Sigma_{(A,B) : \univ\times\univ} \Equiv(A,B)\\
    \intertext{where} 
    \Equiv(A,B) &\coloneqq  \Sigma_{f : A \to B} \isEquiv(f)\\
    \intertext{where} 
  \type{isEquiv}(f) &\coloneqq \Sigma_{b : B} \type{isContr}(Pf_b)\\
  \intertext{where}
  \type{isContr}(C) &\coloneqq \Sigma_{c:C} \Pi_{x :C} c=_C x \\
  \intertext{and} Pf_b &\coloneqq \Sigma_{a:A} fa =_B b\end{align*} is the homotopy fiber of $f$ over $b$, using identity types, dependent pair types, and dependent function types.
  \end{dig}

We now establish the categorical analogues of each of these constructions  in a locally cartesian closed, right proper, simplicial model category whose cofibrations are the monomorphisms. A fibration $p \colon E \fto B$ is a weak equivalence if and only if it is a trivial fibration. Between cofibrant objects this is the case if and only if it admits a section $s \colon B \to E$ together with a homotopy $\gamma \colon sp \sim 1_E$ over $B$. Such data in particular chooses a center of contraction $s(b) \in E_b$ in the fiber over each $b \colon 1 \to B$ and also provides a contracting homotopy, proving the contractibility of the fibers of $p$.

The data of such a homotopy is encoded by a dashed lift 
\[
  \begin{tikzcd} & P_BE \arrow[d, two heads] \\ E \arrow[d, two heads, "p"'] \arrow[ur, dashed, bend left, "\gamma"] \arrow[r, "{(sp,1_E)}"]\arrow[dr, phantom, "\lrcorner" very near start] & E \times_B E \arrow[d, "\pi_1"]\\
    B \arrow[r, "s"'] & E
  \end{tikzcd} 
    \]
 Since the map $(sp,1_E)$ is a pullback of $s$ along $\pi_1$, this lift is equally encoded by a lift of $s \colon B \to E$ along $\Pi_{\pi_1}P_BE \fto E$. And thus a section of the composite $\Sigma_p \Pi_{\pi_1}P_BE \fto B$ provides the data of both the section $s$ and the homotopy $\gamma$. Thus we define
\[ \isContr_B(E) \coloneqq \Sigma_p \Pi_{\pi_1} P_BE \ \in\ \cE_{/B},\]
which is the $B$-indexed object internalizing the assertion that for each $b \in B$, the fiber $E_b$ is contractible. By construction:

\begin{lem}\label{lem:iscontr} A fibration $p \colon E \fto B$ is a trivial fibration if and only if $\isContr_B(E) \fto B$ has a section. \qed
\end{lem}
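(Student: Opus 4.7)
The plan is to unpack a section of $\isContr_B(E) = \Sigma_p \Pi_{\pi_1} P_BE \to B$ via the adjunctions $\Sigma_p \dashv p^*$ and $\Pi_{\pi_1} \dashv \pi_1^*$ furnished by local cartesian closure, and to observe that the resulting data is precisely a pair $(s,\gamma)$ witnessing $p$ as a trivial fibration in the sense recalled in the paragraph preceding the lemma.

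First I would note that a section $\sigma$ of $\Sigma_p \Pi_{\pi_1} P_BE \to B$ is, unwinding $\Sigma_p$, a map $\sigma \colon B \to \Pi_{\pi_1} P_BE$ whose composite with the structural projection $\Pi_{\pi_1} P_BE \to E$ is a section $s \colon B \to E$ of $p$. Equivalently, $\sigma$ is a section of the pulled-back fibration $s^*\Pi_{\pi_1} P_BE \to B$. Next I would apply the Beck--Chevalley isomorphism to the pullback square
\[
\begin{tikzcd}
E \arrow[d, "p"'] \arrow[r, "{(sp,\id_E)}"] \arrow[dr, phantom, "\lrcorner" very near start] & E \times_B E \arrow[d, "\pi_1"] \\
B \arrow[r, "s"'] & E
\end{tikzcd}
\]
which really is a pullback (for any $e' \in E$ there is a unique $b \in B$ with $p(e') = b$, and $s(b)$ then determines the first coordinate). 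Beck--Chevalley rewrites $s^*\Pi_{\pi_1} P_BE$ as $\Pi_p \,\widetilde{s}^* P_BE$, where $\widetilde{s} \coloneqq (sp,\id_E)$. A section over $B$ of this pushforward transposes, via $p^* \dashv \Pi_p$ together with $p^*B \cong E$, to a section of $\widetilde{s}^* P_BE \to E$, i.e.\ a map $\gamma \colon E \to P_BE$ with $(e_0,e_1) \cdot \gamma = (sp,\id_E)$. By construction of the fibered path space $P_BE$, such a $\gamma$ is exactly a fibered homotopy $sp \sim \id_E$ over $B$.

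To finish, I would invoke the characterization recalled just before the statement: since all objects in the model category at hand are cofibrant (the cofibrations being the monomorphisms), a fibration $p$ is a trivial fibration iff there exist a section $s \colon B \to E$ of $p$ and a homotopy $\gamma \colon sp \sim \id_E$ over $B$. The chain of correspondences above identifies sections of $\isContr_B(E) \to B$ with precisely such pairs $(s,\gamma)$, giving the biconditional. The main obstacle is purely bookkeeping around the Beck--Chevalley step: one must check the displayed square is indeed a pullback and invoke the fact that pullback along a morphism commutes with the right adjoint $\Pi$ in a locally cartesian closed category; everything else is a direct unfolding of the definitions of $\Sigma$, $\Pi$, and the fibered path space.
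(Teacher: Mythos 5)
Your argument is correct and takes essentially the same route as the paper, whose proof is simply the construction in the paragraph preceding the lemma: unwind $\isContr_B(E) = \Sigma_p\Pi_{\pi_1}P_BE$ via the adjunctions of local cartesian closure to identify a section over $B$ with a section $s$ of $p$ plus a fibrewise homotopy $\gamma \colon sp \sim 1_E$, then invoke the characterization of trivial fibrations between cofibrant objects. The only difference is one of packaging: you route through Beck--Chevalley for your displayed pullback square and then transpose across $p^*\dashv\Pi_p$, whereas the paper transposes directly across $\pi_1^*\dashv\Pi_{\pi_1}$ using the observation that $(sp,1_E)=\pi_1^*s$; Beck--Chevalley is exactly what makes these two transpositions agree, so the content is the same.
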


Now a map $f \colon E_1 \to E_2$ over $B$ is a weak equivalence if and only if its replacement by a weakly equivalent fibration over $E_2$ is a trivial fibration. This replacement is the map $p \colon P_bf \fto E_2$ defined by the pullback of the fibered path space of $E_2$ over $B$ in the following diagram.
\[
  \begin{tikzcd}
    E_1 \arrow[r, "f"] \arrow[dr, tcofdashed, "e"'] \arrow[ddr, bend right, "{\langle 1,f\rangle}"'] & E_2 \arrow[dr, tcofarrow] \\ & P_B f \arrow[r] \arrow[d, two heads, "{\langle q,p\rangle}"'] \arrow[dr, phantom, "\lrcorner" very near start] & P_BE_2 \arrow[d, two heads] \\&  E_1 \times_B E_2 \arrow[r, "f \times 1"'] & E_2 \times_B E_2
  \end{tikzcd}
\]
Note by construction, the map $q \colon P_Bf \fwto E_1$ is a trivial fibration, and thus its section $e \colon E_1 \cwto P_Bf$ is a trivial cofibration. 

By Lemma \ref{lem:iscontr}, the map $p \colon P_Bf \fto E_2$ is a trivial fibration if and only if $\isContr_{E_2}(P_Bf) \fto E_2$ has a section, or equivalently if and only if the pushforward along $p_2 \colon E_2 \fto B$ has a section over $B$. Thus we define
\[ \isEquiv_B(f) \coloneqq \Pi_{p_2} \isContr_{E_2}(P_Bf) \ \in\ \cE_{/B}.\]
By construction:

\begin{lem}\label{lem:isequiv}
  A map $f \colon E_1 \to E_2$ between fibrations is a weak equivalence if and only if $\isEquiv_B(f) \fto B$ has a section. \qed
\end{lem}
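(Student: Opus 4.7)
The plan is to chain together the factorization $f = p \cdot e$ coming from the construction of $P_Bf$, the 2-of-3 property for weak equivalences, Lemma \ref{lem:iscontr}, and the adjunction $p_2^* \dashv \Pi_{p_2}$.

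First, I would extract from the preceding diagram the factorization
\[ \begin{tikzcd} E_1 \arrow[r, tcofarrow, "e"] & P_Bf \arrow[r, two heads, "p"] & E_2 \end{tikzcd} \]
with $e$ a trivial cofibration and $p$ a fibration. By 2-of-3, $f$ is a weak equivalence if and only if $p$ is; and since $p$ is already a fibration, this is equivalent to $p$ being a trivial fibration. This reduces the question to detecting whether the fibration $p \colon P_Bf \fto E_2$ is a trivial fibration.

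Next, viewing $p$ as a fibration over $E_2$ (rather than over $B$), I would apply Lemma \ref{lem:iscontr} with $E_2$ in place of the base $B$: the fibration $p$ is a trivial fibration if and only if $\isContr_{E_2}(P_Bf) \to E_2$ admits a section. Finally, I would use the adjunction $p_2^* \dashv \Pi_{p_2} \colon \cE_{/E_2} \to \cE_{/B}$. Because right adjoints preserve terminal objects, $p_2^* 1_B = 1_{E_2}$, so
\[ \cE_{/B}\bigl(1_B,\; \Pi_{p_2} X\bigr) \;\cong\; \cE_{/E_2}\bigl(p_2^* 1_B,\; X\bigr) \;=\; \cE_{/E_2}\bigl(1_{E_2},\; X\bigr) \]
naturally in $X \in \cE_{/E_2}$. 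Instantiating $X = \isContr_{E_2}(P_Bf)$ identifies sections of $\isEquiv_B(f) = \Pi_{p_2}\isContr_{E_2}(P_Bf) \to B$ with sections of $\isContr_{E_2}(P_Bf) \to E_2$, completing the chain of equivalences.

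This proof is essentially a clean assembly of pieces that have already been put in place: the Mather-style factorization, the $\isContr$-based detection of trivial fibrations, and the slice adjunction. There is no real obstacle; the only point requiring care is bookkeeping over which slice category one is working in at each step, so that the right adjunction and the correct version of the $\isContr$ construction are invoked.
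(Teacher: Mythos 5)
Your assembly is exactly what the paper intends: the preceding paragraph factors $f$ as the trivial cofibration $e \colon E_1 \cwto P_Bf$ followed by the fibration $p \colon P_Bf \fto E_2$, observes via 2-of-3 that $f$ is a weak equivalence iff $p$ is a trivial fibration, applies Lemma \ref{lem:iscontr} over the base $E_2$, and then passes through $\Pi_{p_2}$ via the slice adjunction — which is why the lemma is stamped \qed immediately after the definition. Your proof is correct and matches the paper's (implicit) argument; the only slightly redundant step is deducing $p_2^*1_B = 1_{E_2}$ from right-adjointness of $p_2^*$, when it is the trivial observation that the pullback of $\id_B$ along $p_2$ is $\id_{E_2}$.
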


Finally the slice category $\cE_{/B}$ has an internal hom $\map_B(E_1,E_2)$. The counit $\epsilon \colon \map_B(E_1,E_2) \times_B E_1 \to E_2$ pulls back to define a map
\[ \mu \colon \map_B(E_1,E_2) \times_B E_1 \to \map_B(E_1,E_2)\times_B E_2\]
over $\map_B(E_1,E_2)$. The map $\mu$ is the ``universal map from $E_1$ to $E_2$ over $B$'' in the sense that the fiber of $\mu$ over an element $f \colon X \to \map_B(E_1,E_2)$ over $b \colon X \to B$ is a map  $f_b \colon b^*E_1 \to b^*E_2$ between the fibers over $b$.
\[
  \begin{tikzcd}[column sep=small, row sep=tiny]
    {b^*E_1} \arrow[drr, dotted, "f_b"'] \arrow[dr, phantom, "\lrcorner" very near start]\arrow[ddr, dotted, two heads] \arrow[rrr, dotted] & [-10pt] & [-10pt] & [-10pt] \map_{B}(E_1,E_2) \times_{B} (E_1) \arrow[rrr, two heads] \arrow[drd, two heads] \arrow[dr, phantom, "\lrcorner" very near start] \arrow[drrrrr, dashed, "\epsilon"] \arrow[drr, dashed, "\mu"'] &[-50pt]~ &[-80pt] ~ & [-30pt]  E_1 \arrow[ddr, two heads, "p_1"'] & [-10pt]~ & [-10pt]~  \\ [+1em]
    & ~& {B} \arrow[dl, dotted, two heads] \arrow[rrr, dotted] \arrow[dr, phantom, "\lrcorner" very near start] & ~& ~&  [-10pt]\map_{B}(E_1,E_2) \times_{B} (E_2) \arrow[rrr, crossing over] \arrow[dr, phantom, "\lrcorner" very near start] \arrow[dl, two heads] &~ &~ & E_2 \arrow[dl, two heads, "p_2"]\\ [+3em] &\Gamma \arrow[rrr, "f"] \arrow[rrrrrr, bend right=10, "{b}"'] &~& ~& \map_{B}(E_1,E_2) \arrow[rrr, two heads] & ~ & ~& B
    \end{tikzcd}
\]

Thus we define
\[ \Equiv_B(E_1,E_2) \coloneqq \Sigma_{\map_B(E_1,E_2)} \isEquiv_{\map_B(E_1,E_2)}(\mu) \ \in\ \cE_{/B}\] so that a section provides a map $E_1 \to E_2$ over $B$ that is an equivalence. More generally:
\begin{lem}\label{lem:Equiv}
   Given fibrations $p_1 \colon E_1 \fto B$ and $p_2 \colon E_2 \fto B$ between fibrations over $B$, any lift of $b \colon X \to B$ to $\Equiv_B(E_1,E_2)$ provides a map $b^*E_1 \to b^*E_2$ over $X$ that is an equivalence. 
\end{lem}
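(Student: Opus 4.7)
The plan is to unpack what it means to have a lift of $b \colon X \to B$ to $\Equiv_B(E_1,E_2)$ and reduce it, via pullback stability, to the hypothesis of Lemma \ref{lem:isequiv}. Writing $M \coloneqq \map_B(E_1,E_2)$, the object $\Equiv_B(E_1,E_2)$ over $B$ was defined as $\Sigma_{M} \isEquiv_M(\mu)$. Therefore the universal property of $\Sigma_M$ tells us that a lift of $b$ through $\Equiv_B(E_1,E_2) \to B$ decomposes as (i) a lift $f \colon X \to M$ of $b$ along the fibration $M \to B$, together with (ii) a section over $X$ of the pullback $f^*\isEquiv_M(\mu) \to X$. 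By the defining adjunction of $\map_B$, the data in (i) is equivalent to a map $\tilde{f}\colon b^*E_1 \to b^*E_2$ of fibrations over $X$, obtained explicitly as the pullback $f^*\mu$ of the universal map along $f$.

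The heart of the argument is now to identify $f^*\isEquiv_M(\mu)$ with $\isEquiv_X(\tilde{f})$. This is a question of the pullback stability of the constructions $\isContr$ and $\isEquiv$. Each of the intermediate objects used to build $\isEquiv_M(\mu)$ is obtained from the following operations: pullback, $\Sigma$ along projections, $\Pi$ along fibrations, and the fibered path-space. Pullback and the $\Sigma$'s evidently commute with further pullback. For the fibered path-space, recall from the proposition preceding the lemma that $P_B(-) \cong \Delta^1 \pitchfork_B (-)$ is a simplicial cotensor formed inside the slice $\cE_{/B}$; since $\cE$ is simplicially locally cartesian closed, the pullback functor $\cE_{/B} \to \cE_{/X}$ preserves this cotensor, giving the pullback stability of $P_B(-)$. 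For the $\Pi$-pushforwards, one invokes the Beck--Chevalley condition, which holds in any locally cartesian closed category. Combining these, one obtains a canonical isomorphism $f^*\isEquiv_M(\mu) \cong \isEquiv_X(f^*\mu) = \isEquiv_X(\tilde{f})$.

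Finally, by Lemma \ref{lem:isequiv} the existence of a section $X \to \isEquiv_X(\tilde{f})$ is precisely the condition that $\tilde{f} \colon b^*E_1 \to b^*E_2$ is a weak equivalence of fibrations over $X$. I expect the main technical obstacle to be a clean verification of Beck--Chevalley together with the simplicial-enrichment step for the fibered path-space; both hold under the standing assumptions on $\cE$, but tracking the compatibility across each stage of the $\isEquiv$ construction (path-space, pullback, $\Pi$ for $\isContr$, $\Pi$ for $\isEquiv$) requires some bookkeeping. Once this stability is in hand, the lemma is a direct unpacking of definitions.
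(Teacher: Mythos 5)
Your proposal is correct and fills in exactly the details the paper leaves implicit: the paper states Lemma~\ref{lem:Equiv} without a separate proof, treating it as an immediate consequence ``by construction'' of the definition of $\Equiv_B(E_1,E_2) = \Sigma_{\map_B(E_1,E_2)} \isEquiv_{\map_B(E_1,E_2)}(\mu)$, the diagram identifying the fiber of $\mu$ over $f$ with a map $b^*E_1 \to b^*E_2$, and Lemma~\ref{lem:isequiv}. Your unpacking---decompose the lift into $f \colon X \to \map_B(E_1,E_2)$ plus a section of $f^*\isEquiv_M(\mu)$, identify $f^*\mu$ with the transposed map $\tilde f \colon b^*E_1 \to b^*E_2$, and use pullback stability (Beck--Chevalley for $\Pi$, pasting for $\Sigma$, and the fact that $f^* \colon \cE_{/B} \to \cE_{/X}$ preserves simplicial cotensors, which the paper records as ``weak stability'' of the fibered path object) to get $f^*\isEquiv_M(\mu) \cong \isEquiv_X(\tilde f)$---is precisely the argument the paper intends, and the ``weak'' (i.e.\ up-to-isomorphism) stability is all that is needed to transport a section and conclude via Lemma~\ref{lem:isequiv}.
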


Finally, given a universe $\pi \colon \tilde{U} \to U$, the universal equivalence will define a fibration $\Eq(\tilde{U}) \twoheadrightarrow U \times U$, so that the pullback along a generalized element  recovers the space of equivalences just defined:
\[ \begin{tikzcd} \Equiv_B(E_1,E_2) \arrow[r] \arrow[d, two heads] \arrow[dr, phantom, "\lrcorner" very near start] & \Eq(\tilde{U}) \arrow[d, two heads] \\ B \arrow[r, "{(\name{p_1}, \name{p_2})}"'] & U \times U \end{tikzcd}\]
Projecting the map $(\name{p_1}, \name{p_2}) \colon B \to U \times U$ to each of the universe factors individually yields the pullbacks:
\[
\begin{tikzcd} E_1 \arrow[d, two heads, "p_1"'] \arrow[r] \arrow[dr, phantom, "\lrcorner" very near start] & \tilde{U} \times U \arrow[d, two heads, "\pi_1^*\pi"'] \arrow[dr, phantom, "\lrcorner" very near start] \arrow[r]& \tilde{U} \arrow[d, "\pi", two heads] & &E_2 \arrow[d, two heads, "p_2"'] \arrow[r] \arrow[dr, phantom, "\lrcorner" very near start] & U \times \tilde{U} \arrow[d, two heads, "\pi_2^*\pi"'] \arrow[r] \arrow[dr, phantom, "\lrcorner" very near start] & \tilde{U} \arrow[d, "\pi", two heads] \\
  \Gamma \arrow[r, "{(\name{p_1},\name{p_2})}"] \arrow[rr, bend right=10, "{\name{p_1}}"'] & U \times U \arrow[r, "\pi_1"] & U & & \Gamma \arrow[rr, bend right=10, "{\name{p_2}}"']\arrow[r, "{(\name{p_1},\name{p_2})}"] & U \times U \arrow[r, "\pi_2"] & U
\end{tikzcd}
\]
Thus, we define
\[ \Eq(\tilde{U}) \coloneqq \Equiv_{U \times U}(\pi_1^*\tilde{U}, \pi_2^*\tilde{U}) \ \in\ \cE_{/U \times U}.\]
By construction:
\begin{lem}\label{lem:Eq}
Given a map $(\name{p_1},\name{p_2}) \colon B \to U \times U$ classifying fibrations $p_1 \colon E_1 \fto B$ and $p_2 \colon E_2 \fto B$ over $B$, a lift to $\Eq(\tilde{U})$ provides an equivalence $E_1 \simeq E_2$ over $B$.
\[
  \begin{tikzcd}  & \Eq(\tilde{U}) \arrow[d, two heads] \\ B \arrow[ur, dashed, "\name{\epsilon}"] \arrow[r, "{(\name{p_1},\name{p_2})}"']& U \times U
  \end{tikzcd} \]
\end{lem}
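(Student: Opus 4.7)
The plan is to reduce the statement to Lemma~\ref{lem:Equiv} by identifying the pullback of $\Eq(\tilde{U}) \fto U \times U$ along $(\name{p_1},\name{p_2}) \colon B \to U \times U$ with $\Equiv_B(E_1,E_2) \fto B$. Once this identification is established, a lift $\name{\epsilon} \colon B \to \Eq(\tilde{U})$ transposes to a section $B \to \Equiv_B(E_1,E_2)$ of the pullback (equivalently, a lift of $1_B$ to $\Equiv_B(E_1,E_2)$), and Lemma~\ref{lem:Equiv} applied to $X = B$, $b = 1_B$ then delivers the desired equivalence $E_1 \simeq E_2$ over $B$.

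First I would verify the key pullback-stability claim: for any map $f \colon A \to B$ and fibrations $p_1 \colon E_1 \fto B$, $p_2 \colon E_2 \fto B$, there is a canonical isomorphism
\[ f^*\Equiv_B(E_1,E_2) \cong \Equiv_A(f^*E_1,f^*E_2)\]
in $\cE_{/A}$. This reduces to pullback-stability for each ingredient in the chain of constructions of \S\ref{ssec:univalence}: the internal hom $\map_B(-,-)$, the fibered path space $P_B(-)$, and the operations $\isContr_B$ and $\isEquiv_B$. Stability of $\map_B$ and of pushforward under base change follows from the Beck--Chevalley condition for the adjoint triple $\Sigma_f \dashv f^* \dashv \Pi_f$ in the locally cartesian closed category $\cE$; stability of the fibered path space follows from simplicial cartesian closure, as noted in the earlier proof that the path space factorization is weakly stable under pullback. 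Assembling these, $\isContr$, $\isEquiv$, and hence $\Equiv$ all commute with $f^*$.

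Applying this to $f = (\name{p_1},\name{p_2}) \colon B \to U \times U$ and to the fibrations $\pi_1^*\tilde{U}, \pi_2^*\tilde{U} \fto U \times U$ — which pull back along $f$ to $p_1, p_2$ respectively by the displayed pullback diagrams preceding the definition of $\Eq(\tilde{U})$ — I obtain
\[ f^*\Eq(\tilde{U}) = f^*\Equiv_{U \times U}(\pi_1^*\tilde{U},\pi_2^*\tilde{U}) \cong \Equiv_B(E_1,E_2).\]
Thus a lift $\name{\epsilon}$ as in the statement corresponds, under the universal property of this pullback, to a section of $\Equiv_B(E_1,E_2) \fto B$; by Lemma~\ref{lem:Equiv} (with $X = B$, $b = 1_B$) this section exhibits an equivalence $E_1 \simeq E_2$ over $B$.

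The main obstacle will be in executing the stability step cleanly: the Beck--Chevalley isomorphism $f^*\Pi_p \cong \Pi_{f'}(f'')^*$ for a chosen pullback of $p$ along $f$ must be tracked through the nested constructions, and one must check that the enriched (simplicial) structure is respected so that the path space ingredient pulls back correctly. Once these coherences are in place, the rest is essentially a chase of universal properties and an invocation of Lemma~\ref{lem:Equiv}.
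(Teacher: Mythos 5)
Your proof is correct, but it takes a longer route than the paper, and the extra work is avoidable. Lemma~\ref{lem:Equiv} is deliberately stated for an arbitrary generalized element $b \colon X \to B$, not just for sections: a lift of $b$ to $\Equiv_B(E_1,E_2)$ yields an equivalence $b^*E_1 \simeq b^*E_2$ over $X$. The paper applies that lemma \emph{directly} with the fibrations $\pi_1^*\tilde{U}, \pi_2^*\tilde{U}$ over $U\times U$ and $b = (\name{p_1},\name{p_2})\colon B \to U\times U$, which immediately gives an equivalence $(\name{p_1},\name{p_2})^*\pi_1^*\tilde{U} \cong E_1 \,\simeq\, E_2 \cong (\name{p_1},\name{p_2})^*\pi_2^*\tilde{U}$ over $B$ --- no pullback-stability of $\Equiv$ required. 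Your detour through the Beck--Chevalley isomorphism $f^*\Eq(\tilde{U}) \cong \Equiv_B(E_1,E_2)$, followed by the $b = 1_B$ case of Lemma~\ref{lem:Equiv}, is mathematically sound (Beck--Chevalley for $\Pi_f$ does hold in a locally cartesian closed category, internal homs are stable under base change, and the fibered path space is weakly stable as noted in the proof of the proposition on the fibered path space factorization), but it essentially re-derives the general $b$ case of Lemma~\ref{lem:Equiv} from its $b = 1_B$ special case rather than just using the lemma as stated. So: same key lemma, but you reached for it one level too late, which is why the ``main obstacle'' you flag (tracking Beck--Chevalley through the nested constructions) appears at all --- the paper's argument sidesteps it entirely.
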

\begin{proof}
  By Lemma \ref{lem:Equiv}, a lift of $(\name{p_1},\name{p_2}) \colon B \to U \times U$ to $\Eq(\tilde{U})$ provides map from $(\name{p_1},\name{p_2})^*\pi_1^*\tilde{U} \cong \name{p_1}^*\tilde{U} \cong E_1$ to $(\name{p_1},\name{p_2})^*\pi_2^*\tilde{U} \cong \name{p_2}^*\tilde{U} \cong E_2$ over $B$ that is an equivalence.
\end{proof}

In particular, there is a lift 
\[
  \begin{tikzcd} & \Eq(\tilde{U}) \arrow[d, two heads] \\ U \arrow[r, "{(1,1)}"'] \arrow[ur, dashed, "\name{\id}"] & U \times U
  \end{tikzcd}\]
classifying the identity equivalence $\tilde{U} \cong \tilde{U}$ over $U$. We may now state the model categorical version of Voevodksy's univalence axiom.

\begin{defn}\label{defn:univalence}
  The fibration $\pi \colon \tilde{U} \to U$ is \textbf{univalent} if the comparison map defined by any solution to the lifting problem 
\[
  \begin{tikzcd}[sep=3em] U \arrow[d, "\refl"', tcofarrow] \arrow[r, "\id"] & \Eq(\tilde{U}) \arrow[d, two heads] \\ U^{\Delta^1} \arrow[ur, dashed, "\text{id-to-equiv}" description] \arrow[r, two heads] & U \times U
  \end{tikzcd}
  \]
from the path object to the object of equivalences is an equivalence.
\end{defn}

By the 2-of-3 property, $\text{id-to-equiv} \colon U^{\Delta^1} \to \Eq(\tilde{U})$ is a weak equivalence if and only if $\id \colon U \to \Eq(\tilde{U})$ is a weak equivalence, which is the case if and only if either projection $\Eq(\tilde{U}) \to U$ is a trivial fibration:
\[
\begin{tikzcd} A \arrow[d, "i"', tail] \arrow[r, "\name{w}"] & \Eq(\tilde{U})\arrow[d, two heads, "\pi_2"] \\ B\arrow[ur, dashed]\arrow[r, "{\name{q_2}}"'] & U
\end{tikzcd}  
\] As with the fibrancy of the universe, this can be expressed as a property of the model category $\cE$ that does not refer explicitly to the universal Kan fibration $\pi \colon \tilde{U} \fto U$. 

  \begin{defn}\label{defn:EEP} The \textbf{equivalence extension property} states that given a cofibraton $i \colon A \cto B$, an equivalence $w \colon E_1 \wto E_2$ of small fibrations over $A$ and a fibration $D_2$ over $B$ that pulls back along $i$ to $E_2$, there exists a small fibration $D_1$ over $B$ and an equivalence $v \colon D_1 \wto D_2$ over $B$ that pulls back along $i$ to the equivalence $w$.
    \[
      \begin{tikzcd}[column sep=tiny, row sep=small]
        E_1 \arrow[drr, "w" near end, "\sim"'] \arrow[ddr, two heads, "p_1"']  \arrow[drrrrr, phantom, "\lrcorner" very near start] \arrow[rrr, dashed] & & & D_1 \arrow[ddr, two heads, dashed] \arrow[drr, "v", dashed, "\sim"'] \\ & & E_2 \arrow[drr, phantom, "\lrcorner" very near start] \arrow[dl, two heads, "p_2"] \arrow[rrr] & & & D_2 \arrow[dl, two heads, "q_2"] \\ & A \arrow[rrr, tail, "i"'] & & & B
      \end{tikzcd}\]
  \end{defn}

As with Lemma \ref{lem:FEP-fibrant}, there is a slightly delicate argument necessary to prove that the equivalence extension property implies univalence of the fibration $\pi \colon \tilde{U} \to U$ as stated in Definition \ref{defn:univalence}. Suppose the equivalence extension property holds and consider a commutative square
  \[
    \begin{tikzcd}
    A \arrow[r, "\name{w}"] \arrow[d, tail, "i"'] & \Eq(\tilde{U}) \arrow[d, two heads, "\pi_2"] \\ B \arrow[r, "\name{q_2}"'] \arrow[ur, dashed, "v"'] & U    
    \end{tikzcd}
  \]
By the equivalence extension property, $w \colon E_1 \simeq E_2$ extends to an equivalence $v \colon D_1 \simeq D_2$ between fibrations over $B$. By realignment, there is a classifying map for $D_1$ extending the classifying map for $E_1$.
\[
  \begin{tikzcd}
    E_1 \arrow[dd, two heads, "p_1"'] \arrow[rr] \arrow[drr, phantom, "\lrcorner" very near start] \arrow[dddr, phantom, "\lrcorner" very near start]\arrow[dr] & & \tilde{U} \arrow[dd, two heads, "\pi"] \\ & D_1  \arrow[dr, phantom, "\lrcorner" very near start]\arrow[ur, dashed] & ~ \\ A \arrow[dr, tail, "i"'] \arrow[rr, "{\name{p_1}}" near end] & & U \\ & B \arrow[ur, dashed, "{\name{q_1}}"'] \arrow[from=uu, two heads, crossing over, "q_1"' near start]
  \end{tikzcd}  
\]
This defines a new lifting problem as below-left together with the data of an equivalence $v \colon D_1 \simeq D_2$ over $B$ extending $w \colon E_1 \simeq E_2$. Unraveling the definitions, this gives us the lifting problem below-center and then below-right: 
\[
  \begin{tikzcd}
 A \arrow[d, "i"', tail] \arrow[r, "\name{w}"] & \Eq(\tilde{U}) \arrow[d, two heads] & A \arrow[d, "i"', tail] \arrow[r, "w= i^*v"] & \Equiv_B(D_1,D_2) \arrow[d, two heads] & A \arrow[d, "i"', tail] \arrow[r, "{\name{w}}"] & \isEquiv_B(v) \arrow[d, two heads]\\ B \arrow[ur, dashed, "{\name{v}}"'] \arrow[r, "{(\name{q_1}, \name{q_2})}"'] & U \times U    & B \arrow[ur, dashed] \arrow[r, "{\name{v}}"'] & \map_B(D_1,D_2) & B \arrow[ur, dashed] \arrow[r, equals] & B
  \end{tikzcd}
\]
Since $v$ is a weak equivalence, the map $\isEquiv_B(v) \fto B$ is a trivial fibration,\footnote{We proved a bit less; see \cite[4.3]{shulman-reedy} for more.} so there exists a lift in the right-hand square which induces one in the middle square and then in the left-hand square. In particular, this final lift solves the original lifting problem, proving univalence.

Thus, we have reduced the verification of the univalence axiom to proving the equivalence extension property for Kan fibrations of simplicial sets. We define $D_1$ and $v$ by the pullback
\[
\begin{tikzcd} D_1 \arrow[d, dashed, "v"'] \arrow[r, dashed] \arrow[dr, phantom, "\lrcorner" very near start] & \Pi_i E_1 \arrow[d, "\Pi_i w"] \\ D_2 \arrow[r, "\eta"'] & \Pi_i E_2
\end{tikzcd}
\]
and leave the rest of the verification to the literature \cite{klv,shulman-reedy,sattler}.

\begin{rmk}
Note that fibrancy of $U$ was not required to prove the equivalence extension property, and in fact follows from it by an argument due to Stenzel \cite[2.4.3]{stenzel}. We must construct a lift
\[
  \begin{tikzcd}
    A \arrow[d, "i"', tcofarrow] \arrow[r, "{\name{p}}"] & U \\ B \arrow[ur, dashed, "{\name{q}}"']
  \end{tikzcd}
\]
that is, we must extend a fibration as indicated:
\[
  \begin{tikzcd}
    E_1 \arrow[d, two heads, "p_1"'] \arrow[r, dashed] \arrow[dr, phantom, "\lrcorner" very near start] & D_1 \arrow[d, two heads, dashed, "q_1"] \\ A \arrow[r, tail, tcofarrow, "j"'] & B
  \end{tikzcd}
\]
Factor the composite $j \cdot p_1$ into an acyclic cofibration followed by a fibration and pull the fibration back:
\[
  \begin{tikzcd} E_1 \arrow[dr, dotted] \arrow[ddr, bend right, "p_1"', two heads] \arrow[drr, bend left, tail, tcofarrow] \\ & E_2 \arrow[d, two heads, "p_2"'] \arrow[r, tcofarrow] \arrow[dr, phantom, "\lrcorner" very near start] & D_2 \arrow[d, two heads, "q_2"] \\ & A \arrow[r, tail, tcofarrow, "j"'] & B
  \end{tikzcd}
\]
By right properness, $j$ pulls back to a weak equivalence and thus the induced map to the pullback is an equivalence between fibrations over $A$. Now the equivalence extension property can be used to extend the fibration $E_1$ as desired.
\end{rmk}

\part*{Lecture III : All \texorpdfstring{$\infty$}{infinity}-toposes have strict univalent universes}

\section{Model categorical presentations of \texorpdfstring{$\infty$}{infinity}-topoi}\label{sec:topoi}

We have sketched, at least in one example, how homotopy type theory can be interpreted in at least one category: namely the category of simplicial sets. This interpretation takes the form of a functor whose domain is the category of contexts and whose codomain is a contextual category built from the category of simplicial sets using a universe. Note, in particular, that this interpretation is a functor between 1-categories that preserves various categorical structures, corresponding to the logical rules of homotopy type theory, on the nose.

As we now explain, Voevodsky's result can be understood as giving an interpretation of homotopy type theory in the \emph{$\infty$-topos of spaces}. More precisely, the 1-category of simplicial sets is equipped with the structure reviewed in \S\ref{ssec:quillen} that make it a simplicial model category, and via this structure it can be understood to \emph{present} an $\infty$-category, namely the $\infty$-category of spaces.\footnote{See \cite{riehl-homotopical} for a survey of the connection between model categories and $\infty$-categories.} Once we have adapted to this perspective on the category of simplicial sets, we  introduce general $\infty$-topoi via a definition that highlights their presentations by simplicial model categories.

The notion of $\infty$-topos we consider, developed by Lurie \cite{lurie-topos} and Rezk \cite{rezk} is analogous to Grothendieck 1-toposes. In fact, every Grothendieck 1-topos arises as the full subcategory of $0$-truncated objects in an $\infty$-topos. But the world of $\infty$-topoi is considerably richer than their 1-dimensional analogues: there are inequivalent $\infty$-topoi whose 0-truncations agree.

\begin{cav}\label{cav:infinity}
This is not at all how I ordinarily think about $\infty$-categories; for that see \cite{RV}. So if you want to object that model categorical presentations of $\infty$-categories are somewhat beside the point---like (gasp!) choosing a basis for your vector space---I agree. However, these presentations are essential to the construction of the interpretation of the syntax of homotopy type theory, as we shall see in \S\ref{sec:topos-semantics}, which is why they will be the focus here.
\end{cav}

\subsection{Simplicial model categories as \texorpdfstring{$\infty$}{infinity}-categories}

Here, following Lurie, we use ``$\infty$-\textbf{categories}'' as a nickname for \textbf{$(\infty,1)$-categories}, which are meant to be categories weakly enriched in $\infty$-groupoids aka ``spaces.'' It turns out that every $\infty$-category can be presented by a category that is strictly enriched in spaces, where by ``spaces'' we mean simplicial sets since the category of simplicial sets is so much better behaved. Though not all simplicially sets are equally ``space-like,'' so it would be more precise to say that an $\infty$-category is a category enriched in Kan complexes \cite[7.8]{bergner}, \cite[2.2.5.1]{lurie-topos}.

An abundant source of categories enriched in Kan complexes is given by simplicial model categories. The definition given in \S\ref{ssec:quillen} is equivalently restated as follows: a model category $\cE$ is a \textbf{simplicial model category} if $\cE$ is enriched, tensored, and cotensored over simplicial sets---and in particular has a mapping space bifunctor $\Map \colon \cE^\op \times \cE \to \Set$---and if moreover for any cofibration $\ell \colon A \cto B$ and fibration $r \colon X \fto Y$ in $\cE$, the Leibniz mapping space map 
\[
\begin{tikzcd} 
 {\Map(B,X)} \arrow[dr, dashed, "{\widehat{\Map(\ell,r)}}" description] \arrow[ddr, bend right] \arrow[drr, bend left] \\   ~ & [-1em] \bullet \arrow[r] \arrow[d] \arrow[dr, phantom, "\lrcorner" very near start] &{\Map(A,X)} \arrow[d] \\ & {\Map(B,Y)} \arrow[r] & {\Map(A,Y)}
\end{tikzcd}
\] 
is a Kan fibration of simplicial sets that is a trivial fibration if either $\ell$ or $r$ is a weak equivalence. As a simplicially enriched category, $\cE$ is of course simplicially enriched, but as a simplicial model category the full subcategory of \emph{fibrant} and \emph{cofibrant} objects is enriched over Kan complexes. This gives the first sense in which a simplicial model category presents an $\infty$-category.

It might seem as if we are throwing away the non-fibrant and non-cofibrant objects when we pass to the $\infty$-category associated to a simplicial model category and there is a sense in which we are, but also a sense in which we are not. To explain, first note that any model category $\cE$ has a \textbf{homotopy category} $\cE[\we^{-1}]$ defined by formally inverting the weak equivalences; the model category axioms guarantee that this process is well-behaved. The homotopy category is equivalent to the category obtained by taking the path components of each space in the Kan complex enriched category of fibrant-cofibrant objects. Via this equivalence, the missing objects can be understood to be isomorphic, at the level of the homotopy category and thus also the $\infty$-category, to the fibrant-cofibrant objects that are still present.

The $\infty$-categories that are presented by simplicial model categories are special in one respect: they are complete and cocomplete as $\infty$-categories. Limits and colimits in the $\infty$-category are presented by \emph{homotopy limits} and \emph{homotopy colimits} definable in the simplicial model category \cite[4.2.4.1]{lurie-topos}. These are constructed via a two-step procedure. Given a diagram in a model category, it first needs to be ``fattened-up,'' by replacing certain maps by cofibrations if the aim is to form a homotopy colimit or by fibrations if the aim is to form a homotopy limit. Then the ordinary colimit or limit of the fattened-up diagram models the homotopy colimit or homotopy limit \cite[\S5]{riehl-homotopical}.

\begin{ex}\label{ex:sset-as-model-cat} As noted in \S\ref{ssec:quillen}, the category of simplicial sets defines a simplicial model category:
  \begin{itemize}
    \item The cofibrations are the monomorphisms, the weak equivalences are the weak homotopy equivalences, and the fibrations are the Kan fibrations. All objects are cofibrant, while the fibrant objects are the Kan complexes.
    \item As a complete and cocomplete cartesian closed category, simplicial sets is enriched, tensored, and cotensored over itself. Moreover, the simplicial enrichment is compatible with the model structure in the sense described in Remark \ref{rmk:sm7}.
  \end{itemize}
The associated $\infty$-category is the $\infty$-category of spaces, typically denoted by $\cS$.
\end{ex}

The $\infty$-category of spaces plays a role for $\infty$-category theory that is analogous to the role played by the category of sets in 1-category theory. There is an important sense, though, in which the category of simplicial sets is better behaved than the category of sets that is clarified by the following definition:

\begin{defn}[{\cite[6.5]{rezk}}]\label{defn:descent}
  A simplicial model category $\cE$ satisfies \textbf{descent} if:
  \begin{enumerate}
    \item Homotopy colimits are stable under homotopy pullback: for any $X = \hocolim_i X_i$ and $f \colon Y \to X$, the map $\hocolim_i (Y \tilde{\times}_X X_i) \to Y$ from the homotopy colimit of the homotopy pullbacks is a weak equivalence.
    \item\label{itm:descent} For a natural transformation between diagrams whose naturality squares are homotopy pullbacks, the naturality squares formed by the homotopy colimit cones are also homotopy pullbacks.\footnote{This can be understood to mean that the canonical comparison map from the initial vertex in the square to the homotopy pullback is a weak equivalence.}
  \end{enumerate}
\end{defn}

\begin{rmk} Any $\infty$-category $\cE$ with pullbacks has a (contravariant) \textbf{core of self-indexing} functor
\[ \begin{tikzcd}[row sep=tiny]  \cE^\op \arrow[r, "\EE"] & \infty\text{-}\GPD  \\ B \arrow[r, maps to] & (\cE_{/B})^\cong\arrow[d, "f^*"] \\ [7pt] A \arrow[u, "f"] \arrow[r, maps to] & (\cE_{/A})^\cong \end{tikzcd}\] 
The $\infty$-category satisfies descent just when this functor carries colimits in $\cE$ to limits in the $\infty$-category of $\infty$-groupoids.
\end{rmk}

While the category of sets satisfies a limited form of descent---for certain, but not all, colimit diagrams---the $\infty$-category of spaces satisfies descent for colimit diagrams of all shapes \cite[2.1-2]{rezk}.

\subsection{Simplicial model categories of simplicial presheaves}\label{ssec:simp-presheaves}

The descent property is very convenient, so it is natural to wonder what other $\infty$-categories might satisfy descent. A natural idea is to consider diagram $\infty$-categories $\cS^{\cD^\op}$ valued in the $\infty$-category of spaces. Since limits and colimits in diagram $\infty$-categories are defined pointwise in terms of limits and colimits in the base $\infty$-category, one would expect the properties of Definition \ref{defn:descent} to be inherited by $\cS^{\cD^\op}$.

This argument would work just fine had we developed the theory of limits and colimits natively at the level of $\infty$-categories, which is certainly possible to do. However, we have defined limits and colimits in a bicomplete $\infty$-category by using a presentation by a simplicial model category. Thus, in order to describe limits and colimits in a diagram $\infty$-category we need to find a suitable simplicial model category presentation.

Let $\cE$ be any simplicial model category and let $\cD$ be a small simplicially enriched category.\footnote{Since we are mapping out of $\cD$ and into $\cE$ it is not important whether the hom-spaces in $\cD$ are Kan complexes.} We can certainly define a simplicial model category structure on $\cE^{\ob\cD} \cong \prod_{\ob\cD}\cE$ by defining everything ``pointwise.'' It is natural to hope that the simplicial category $\cE^{\cD^\op}$ of simplicially enriched functors inherits a similar pointwise-defined simplicial model structure, but there are two (related) problems:
\begin{enumerate}
  \item The pointwise-defined cofibrations fail to lift against the pointwise-defined trivial fibrations, and similarly the pointwise-defined trivial cofibrations fail to lift against the pointwise-defined fibrations.\footnote{This failure will be explained in more detail in \S\ref{ssec:cobar}.}
  \item The simplicial hom-space
  \[ \Map_{\cE^{\cD^\op}}(X,Y) \coloneqq \int_{d \in \cD} \Map_\cE(Xd,Yd)\]
  between pointwise fibrant-cofibrant diagrams may fail to be a Kan complex.
\end{enumerate}
It turns out that a solution to the first problem will solve the second as well. In fact, we present two solutions, which are in some sense equivalent. To describe them note firstly that the forgetful functor
\[
\begin{tikzcd}[sep=large] \cE^{\cD^\op} \arrow[r, "U" description] & \cE^{\ob\cD} \arrow[l, bend left, "R", "\bot"'] \arrow[l, bend right, "L"', "\bot"]
\end{tikzcd}
\]
admits left and right adjoints, defined by simplicially enriched left and right Kan extensions.

\begin{defn} For any functor $U \colon \cM \to \cN$ where $\cN$ is a model category define a morphism in $\cM$ to be:
  \begin{itemize}
  \item a $U$-cofibration, $U$-weak equivalence, or $U$-fibration just when its image under $U$ belongs to those respective classes
  \item a \textbf{projective cofibration/projective trivial cofibration} when it has the left lifting property for $U$-trivial fibrations/$U$-fibrations
  \item a \textbf{injective fibration/injective trivial fibration} when it has the right lifting property for $U$-trivial co\-fib\-rations/$U$-cofibrations
  \end{itemize}
  \end{defn}

When the $U$-weak equivalences, $U$-fibrations, and projective cofibrations form a model structure, it is called the \textbf{projective model structure}. Dually, when the $U$-weak equivalences, $U$-cofibrations, and injective fibrations form a model structure, it is called the \textbf{injective model structure}. The following result records one set of conditions that guarantee that these model structures exist:

\begin{prop}[{\cite[8.2]{shulman}}]\label{prop:proj-inj} Given a model category $\cN$, $U \colon \cM \to \cN$ with both adjoints $L \dashv U \dashv R$ so that $UL \dashv UR$ is Quillen, then if $\cN$ is combinatorial\footnote{This is a technical set-theoretical condition that demands that the underlying category is \emph{locally presentable} and the model structure is \emph{cofibrantly generated}.} and $\cM$ is locally presentable, both projective and injective model structures exist and are combinatorial. Moreover:
\begin{itemize}
\item The projective and injective models structures are left or right proper if $\cN$ is.
\item If $\cN$ is a simplicial model category and the adjunctions are simplicial adjunctions then $\cM$ is a simplicial model category with either the projective or injective model structures.
\item Projective cofibrations in particular define $U$-cofibrations and projective fibrations in particular define $U$-fibrations.
\end{itemize}
\end{prop}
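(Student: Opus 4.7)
The plan is to handle the two model structures by different general principles: the projective one via Kan's transfer theorem along the adjunction $L \dashv U$, and the injective one via Jeff Smith's recognition theorem for combinatorial model structures. The additional closure properties---properness, simplicial enrichment, and the containment of projective cofibrations in the $U$-cofibrations---then fall out with short arguments using that $U$, having both adjoints, preserves both limits and colimits.

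For the projective structure, I take the generating (trivial) cofibrations to be $LI$ and $LJ$, where $I$ and $J$ generate the cofibrations and trivial cofibrations of $\cN$. Kan's transfer theorem reduces the construction to checking (a) that the domains of $LI$ and $LJ$ remain small in $\cM$, and (b) that every relative $LJ$-cell complex is a $U$-weak equivalence. For (a), $L$ preserves $\kappa$-small objects because its right adjoint $U$ preserves $\kappa$-filtered colimits---which it does, since $U$ has a further right adjoint $R$. For (b), the key step, I use that $U$ preserves all colimits and that $UL$ is left Quillen by hypothesis: applying $U$ to a pushout of some $Lj$ with $j \in J$ yields a pushout in $\cN$ of $ULj$, which is a trivial cofibration in $\cN$; transfinite composites of trivial cofibrations in $\cN$ are trivial cofibrations, so relative $LJ$-cells are $U$-trivial cofibrations and in particular $U$-weak equivalences.

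For the injective structure there is no evident set generating the $U$-trivial cofibrations, so Kan transfer is unavailable and I would instead invoke Smith's theorem. Take the weak equivalences $W$ to be the $U$-weak equivalences and take a set $I'$ generating the $U$-cofibrations as a weakly saturated class (such an $I'$ exists because $\cM$ is locally presentable and the $U$-cofibrations are the preimage under $U$ of the cofibrantly generated class in $\cN$). One then verifies: $W$ satisfies the $2$-of-$3$ property and is closed under retracts, both inherited from $\cN$ via $U$; $W$ is accessible and accessibly embedded in the arrow category of $\cM$, yielding the solution set condition; every map with the right lifting property against $I'$ lies in $W$, because its $U$-image then has the right lifting property against the cofibrations of $\cN$ and is therefore a trivial fibration there; and the intersection of $W$ with the weakly saturated class of $I'$ is itself closed under pushouts and transfinite composition, using that $U$ preserves colimits and the corresponding stability in $\cN$. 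Smith's theorem then delivers the injective model structure, combinatorial by construction.

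The remaining clauses are verified uniformly. Right and left properness transfer because $U$ preserves both pullbacks and pushouts and sends fibrations and cofibrations of either model structure into fibrations and cofibrations of $\cN$, so properness is inherited. The simplicial enrichment is obtained by transposing the Leibniz-mapping-space condition through the simplicial adjunctions $L \dashv U \dashv R$, reducing SM7 for $\cM$ to SM7 for $\cN$. That projective cofibrations are $U$-cofibrations follows because $U$ carries $LI$-cell complexes to $ULI$-cell complexes, which are cofibrations in $\cN$ since $UL$ is left Quillen, and retracts of cofibrations are cofibrations; projective fibrations are $U$-fibrations by definition. The main obstacle throughout is the solution set condition for $W$ in Smith's theorem---this is where one really needs combinatoriality of $\cN$ and local presentability of $\cM$---but once those hypotheses are in place it is a standard application of accessibility theory.
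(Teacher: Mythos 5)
The paper cites this result to Shulman [S3, Prop.\ 8.2] without giving a proof, so I am comparing your argument against Shulman's and the standard literature rather than against anything in the text.

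Your decomposition---Kan transfer along $L\dashv U$ for the projective structure, Jeff Smith's recognition theorem for the injective structure, with the closure properties handled by pushing everything through $U$---is exactly the route Shulman takes, and the supporting checks (smallness of domains via the further right adjoint $R$; acyclicity for the transfer via $U$ preserving pushouts of $Lj$; properness via $U$ preserving both pullbacks and pushouts and carrying co/fibrations into co/fibrations; SM7 by transposition) are all correct as sketched. Two spots deserve sharper justification than you give. First, the parenthetical claim that a generating set $I'$ for the $U$-cofibrations exists ``because $\cM$ is locally presentable and the $U$-cofibrations are the preimage under $U$ of the cofibrantly generated class in $\cN$'' is true, but not because preimages of cofibrantly generated classes are automatically cofibrantly generated (they are not in general); rather, $U$ is an accessible functor (it preserves all colimits), $\mathrm{cof}_\cN$ is accessible in $\cN^\2$ since $\cN$ is combinatorial, so $U^{-1}(\mathrm{cof}_\cN)$ is an accessible, accessibly embedded, weakly saturated class in $\cM^\2$, and in a locally presentable category such a class is generated by a set---the same Smith/Makkai--Rosick\'y input you correctly flag for $W$. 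Second, in the acyclicity check ``maps with RLP against $I'$ are in $W$,'' the transposition by which $Uf$ acquires the RLP against $\cN$-cofibrations runs through $L\dashv U$: one must note that for any cofibration $i$ in $\cN$, $Li$ is a $U$-cofibration precisely because $UL$ is left Quillen, so $f$ lifting against $Li$ transposes to $Uf$ lifting against $i$. This is exactly where the hypothesis ``$UL\dashv UR$ is Quillen'' earns its keep on the injective side, parallel to its use on the projective side.
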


In particular, Proposition \ref{prop:proj-inj} can be applied to deduce the existence of both the projective and injective model structures on $\sSet^{\cD^\op}$ for any small simplicial category $\cD$; see \cite[A.3.3.9]{lurie-topos} or \cite{moser}. And it turns out that both of these present the $\infty$-category $\cS^{\cD^\op}$ of presheaves: the identity adjunction
\[
  \begin{tikzcd} \sSet^{\cD^\op}_\proj \arrow[r, bend left, "="] \arrow[r, phantom, "\bot"] & \sSet^{\cD^\op}_\inj \arrow[l, bend left, "="]
  \end{tikzcd}
\]
defines a \textbf{Quillen adjunction} between the projective and injective model structures, meaning that the left adjoint preserves the left classes and the right adjoint preserves the right classes. This Quillen adjunction descends to an adjoint equivalence between the homotopy categories of the model categories and hence is a \textbf{Quillen equivalence}. It follows that both model structures present the $\infty$-topos $\cS^{\cD^\op}$.

Finally, we observe that the simplicial model categories $\cS^{\cD^\op}$, with either the projective or injective model structures, satisfy descent. To see this, note that the forgetful functor $U \colon \sSet^{\cD^\op} \to \sSet^{\ob\cD}$ preserves homotopy limits and homotopy colimits and creates weak equivalences.

\subsection{Rezk model toposes}

The simplicial model category $\sSet^{\cD^\op}$, with either the projective or injective model structure is a prototypical example of a \textbf{model topos} as defined by \cite{rezk} inspired by previous work of Simpson \cite{simpson} and To\"{e}n and Vezzosi \cite{TV}. Rezk proves a Giraud-style theorem characterizing model toposes that we take as the definition.

\begin{defn}[{\cite[6.9]{rezk}}] A model category $\cE$ is a \textbf{model topos} if and only if
  \begin{enumerate}
    \item\label{itm:topos-presentation} $\cE$ admits a \textbf{small presentation}.
    \item\label{itm:topos-descent} $\cE$ has descent.
  \end{enumerate}
\end{defn}

The meaning of \textbf{small presentation} for a model category is developed in work of Dugger \cite{dugger-universal,dugger-combinatorial} where it is characterized more precisely than we require here. The essential point\footnote{Dugger's small presentations involve localizations of the \emph{projective} model structure rather than the \emph{injective} model structure. Since the projective and injective model structures are Quillen equivalence and the process of localization preserves this Quillen equivalence, up to Quillen equivalence it makes no difference which model structure is used. The cost of using the injective model structure rather than the projective one is that there is no longer a direct Quillen equivalence to the model category $\cE$ being presented but rather a zig-zag of Quillen equivalences pointing in opposite directions. For our purposes here, however, this is immaterial, and we prefer the injective model structure because its cofibrations are exactly the monomorphisms, a hypothesis used in \S\ref{ssec:univalence}.} is a model category $\cE$ admits a small presentation just when it is Quillen equivalent to a model category of the form $\sSet^{\cD^\op}_S$, that is, to a \emph{localization} of the injective model structure on simplicial presheaves on a small simplicial category $\cD$ by a set of maps $S$.  The category $\cD$ can be thought of as providing ``generators'' for $\cE$ with the set $S$ gives the ``relations.'' 

Here ``localization'' refers to \emph{left Bousfield localization}, a process which ``formally inverts'' the maps in $S$ by turning them into weak equivalences while restricting the class of fibrations; this process does not change the underlying category or the class of cofibrations.

  
The descent condition in \eqref{itm:topos-descent} puts a further restriction on the model categories that satisfy \eqref{itm:topos-presentation}. Rezk proves that a localized model category of the form $\sSet^{\cD^\op}_S$ satisfies descent just when the left adjoint of the canonical Quillen adjunction
\[
  \begin{tikzcd} 
    \sSet^{\cD^\op} \arrow[r, bend left, "="] \arrow[r, phantom, "\bot"] & \sSet^{\cD^\op}_S \arrow[l, bend left, "="]
  \end{tikzcd}
\]
is \textbf{left exact}, meaning that it preserves homotopy pullbacks, and hence all finite homotopy limits. By composing zig-zags of Quillen equivalences:

\begin{thm}[{\cite[6.1]{rezk}}]\label{thm:model-topos} A model category is a \textbf{model topos} if and only if it is Quillen equivalent to a left exact localization $\sSet^{\cD^\op}_S$ of an injective model category of simplicial presheaves indexed by a small simplicial category $\cD$ at a set of maps $S$.
\end{thm}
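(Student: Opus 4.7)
The plan is to prove both directions by tracking the interaction between descent and left exactness of the localization adjunction, using the fact that $\sSet^{\cD^\op}$ itself satisfies descent and that descent is invariant under Quillen equivalence (being really a property of the underlying $\infty$-category).

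For the ``if'' direction, suppose $\cE$ is Quillen equivalent to a left exact localization $\sSet^{\cD^\op}_S$. The small presentation half of being a model topos is immediate from the definition. For descent, I would first verify the property in the unlocalized $\sSet^{\cD^\op}$ with either the projective or injective model structure: as noted in \S\ref{ssec:simp-presheaves}, homotopy limits and colimits of simplicial presheaves are computed pointwise, and the $\infty$-category $\cS$ of spaces satisfies descent; both conditions of Definition \ref{defn:descent} are then inherited pointwise. Next, left exact localization preserves descent: since this is a left Bousfield localization, the underlying category and cofibrations are unchanged, so the left adjoint of the Quillen adjunction is essentially the identity on the underlying category and hence preserves all homotopy colimits, while by hypothesis it preserves homotopy pullbacks. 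These two preservation properties suffice to transport descent squares from $\sSet^{\cD^\op}$ to $\sSet^{\cD^\op}_S$. Finally, descent transports across the Quillen equivalence to $\cE$.

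For the ``only if'' direction, suppose $\cE$ is a model topos. By the definition of small presentation, $\cE$ is Quillen equivalent to some $\sSet^{\cD^\op}_S$, giving the first half of the conclusion. Transporting descent across this Quillen equivalence, $\sSet^{\cD^\op}_S$ has descent. It remains to extract left exactness of the localization adjunction $\sSet^{\cD^\op} \to \sSet^{\cD^\op}_S$. The strategy is to use that every simplicial presheaf arises, up to weak equivalence, as a homotopy colimit of representables, so that any homotopy pullback diagram in $\sSet^{\cD^\op}$ can be presented as the apex of naturality squares over such homotopy colimit diagrams. Descent in $\sSet^{\cD^\op}_S$, applied via condition \eqref{itm:descent}, then identifies the image pullback with the homotopy colimit of pullbacks of the constituent representable slices, which is preserved by the left adjoint; it follows that the localization sends homotopy pullbacks to homotopy pullbacks.

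The main obstacle will be the last step of the ``only if'' direction: translating the descent property of $\sSet^{\cD^\op}_S$ into left exactness of the localization functor. The technical heart is a Yoneda-style reduction showing that preservation of a sufficient supply of ``generating'' homotopy pullbacks (e.g.\ those involving representables and their slices) together with preservation of all homotopy colimits forces preservation of arbitrary finite homotopy limits. Care must be taken throughout to handle fibrant replacements correctly and to distinguish homotopy pullbacks as computed in the unlocalized and localized model structures, since these a priori differ.
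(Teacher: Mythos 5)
The paper does not reprove Rezk's theorem: it quotes \cite[6.1]{rezk} and reduces it to the lemma that $\sSet^{\cD^\op}_S$ satisfies descent if and only if the localization is left exact, together with the observation that descent is invariant under Quillen equivalence. Your ``if'' direction reproduces this reduction faithfully and is essentially sound, modulo the usual care in matching up homotopy pullbacks in the localized and unlocalized structures; with left exactness in hand, $S$-local objects are closed under homotopy pullback, so the two notions agree on $S$-locally fibrant diagrams, which is what makes the transport work.

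The ``only if'' direction is where you are attempting to reprove the harder half of Rezk's lemma, and the reduction you sketch is circular. To invoke descent condition \eqref{itm:descent} in $\sSet^{\cD^\op}_S$ for the transformation $(LX_i \to LB_i) \Rightarrow (LX \to LB)$, you must already know that the naturality squares are homotopy pullbacks \emph{in the localized model structure}. But those squares are $L$ applied to the homotopy pullbacks $X_i = X \times^h_B B_i$ formed in $\sSet^{\cD^\op}$, so you are presupposing that $L$ preserves precisely the kind of pullback you are trying to control. Writing $B$ as a homotopy colimit of representables does not help, since the vertex $X$ remains arbitrary and these are not pullbacks ``over representables'' in any sense that is easier to manage; the same issue afflicts the claim that the constituent squares $(LX_i, LY_i, LB_i, LP_i)$ remain pullbacks after applying $L$. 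Rezk's own argument for the descent-to-left-exactness implication does not proceed by this colimit decomposition but via a patching axiom and an analysis of local (sharp) maps. You correctly identify this step as the technical heart, but the argument you outline needs a non-circular base case that it does not supply.
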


Dugger proves that all combinatorial model categories have a presentation \cite{dugger-combinatorial}, so model toposes can be thought of as combinatorial simplicial model categories of simplicial sheaves, where the data $(\cD,S)$ of Theorem \ref{thm:model-topos} together define the \textbf{model site}. Model toposes are simplicial model categories that present $\infty$-\textbf{toposes}, which Lurie defines to be accessible left exact reflective sub-$\infty$-categories of an $\infty$-category $\cS^{\cD^\op}$ of presheaves and then characterizes in many different ways \cite[6.1]{lurie-topos}.

\begin{ex} The simplicial model category of simplicial sets described in Example \ref{ex:sset-as-model-cat} a model topos.
\end{ex}

\begin{ex} Simplicial presheaves on any small simplicial category $\cD$ defines a model topos with either the projective or injective model structure.
\end{ex}

\begin{ex} Whenever $\cC$ has finite colimits and a terminal object, Goodwillie's $n$-excisive approximation defines a left exact localization of $\sSet^\cC$, yielding a model topos of $n$-excisive functors.
\end{ex}

The analogue of the truncation levels in homotopy type theory is defined as follows:

\begin{defn} A space $X$ is $k$-\textbf{truncated} ($k \geq -1$) if $\pi_q(X,x)$ is trivial for all $q > k$ and for all choices of basepoint. We extend to say $-2$-truncated for spaces that are contractible.
\end{defn}

This definition then extends to an object $X$ in a simplicial model category by applying it the mapping spaces $\Map(Y,X)$ for all $X$. For any simplicial model category $\cE$ write $\tau_k\cE$ for the full subcategory of $k$-truncated objects. 

\begin{prop} If $\cE$ is a model topos then $\tau_0\cE$ is a Grothendieck 1-topos.
\end{prop}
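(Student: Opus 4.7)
The plan is to apply Theorem \ref{thm:model-topos} to reduce to the case $\cE = \sSet^{\cD^\op}_S$, a left exact Bousfield localization of an injective model category of simplicial presheaves on a small simplicial category $\cD$ at a set $S$ of morphisms. Since the truncation subcategories $\tau_k$ depend only on the underlying $\infty$-category---they are defined by a condition on derived mapping spaces---this reduction is preserved by Quillen equivalence. The strategy is then to exhibit $\tau_0 \cE$ as a left exact reflective subcategory of the presheaf $1$-topos $\Set^{\pi_0(\cD)^\op}$, where $\pi_0(\cD)$ denotes the ordinary category with the same objects as $\cD$ and hom-sets $\pi_0 \cD(d, d')$. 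Giraud's theorem---or equivalently the standard correspondence between left exact reflective subcategories of a presheaf $1$-topos and categories of sheaves for a Grothendieck topology---then yields the conclusion.

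First I would establish the identification $\tau_0 \sSet^{\cD^\op} \simeq \Set^{\pi_0(\cD)^\op}$. A space is $0$-truncated precisely when it is weakly equivalent to a discrete set, so a $0$-truncated injective-fibrant simplicial presheaf $X$ may be chosen to take values in $\Set \subset \sSet$. Since the simplicial set of maps between two discrete simplicial sets is itself discrete, the simplicial functoriality of such an $X$ factors through $\pi_0$ of the hom-spaces of $\cD$, yielding an ordinary functor $\pi_0(\cD)^\op \to \Set$; this correspondence is readily seen to be an equivalence of categories. Thus $\tau_0 \sSet^{\cD^\op}$ is a presheaf $1$-topos.

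Next I would show that the $S$-localization reflector descends to a left exact reflector $\Set^{\pi_0(\cD)^\op} \to \tau_0 \cE$. At the $\infty$-categorical level, the composite $\sSet^{\cD^\op} \to \sSet^{\cD^\op}_S \to \tau_0 \sSet^{\cD^\op}_S$ is left exact: the first arrow is left exact by the model topos hypothesis, and the second is left exact because $0$-truncation in any $\infty$-topos preserves finite limits. Restricting to $0$-truncated presheaves on both sides produces the desired $1$-categorical reflector, which inherits left exactness. The main anticipated obstacle is the careful verification that truncation commutes with left exact localization in this way---specifically, that the localization unit at a $0$-truncated presheaf remains $0$-truncated, so that the reflector really does land in $\tau_0 \cE$ rather than merely in $\cE$. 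This follows from left exactness of the $\infty$-localization together with closure of $0$-truncated objects under finite limits. Once the reflector $\Set^{\pi_0(\cD)^\op} \to \tau_0 \cE$ is known to be left exact, Giraud's theorem identifies $\tau_0 \cE$ as a category of sheaves on $\pi_0(\cD)$ for the induced Grothendieck topology, completing the proof.
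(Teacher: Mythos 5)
The paper states this proposition without proof---it is an expository assertion following Rezk and Lurie---so there is no ``paper proof'' to compare against. Your outline via Theorem~\ref{thm:model-topos}, the identification $\tau_0\sSet^{\cD^\op}\simeq\Set^{\pi_0(\cD)^\op}$, and the restriction of the left exact localization is the expected route and the overall strategy is sound.

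There is, however, one false intermediate claim that you should excise: ``$0$-truncation in any $\infty$-topos preserves finite limits.'' It does not. Already in the $\infty$-topos of spaces, $\pi_0$ fails to preserve pullbacks: $\Omega S^1 = \ast\times_{S^1}\ast$ has $\pi_0(\Omega S^1)\cong\ZZ$, whereas $\pi_0(\ast)\times_{\pi_0(S^1)}\pi_0(\ast)\cong\ast$ (the functor $\tau_{\leq 0}$ does preserve finite \emph{products}, but that is weaker). Fortunately your argument does not actually need this: the substance is carried by your last paragraph, which correctly observes that the left exact localization $L\colon\sSet^{\cD^\op}\to\sSet^{\cD^\op}_S$ carries $0$-truncated objects to $0$-truncated objects (since being $0$-truncated is detected by iterated diagonals being equivalences, which $L$ preserves). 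Thus on the full subcategory of $0$-truncated presheaves the composite $\tau_0\circ L$ coincides with $L$ itself, and left exactness of the restricted reflector then follows because the inclusions $\tau_0\sSet^{\cD^\op}\hookrightarrow\sSet^{\cD^\op}$ and $\tau_0\sSet^{\cD^\op}_S\hookrightarrow\sSet^{\cD^\op}_S$ create finite limits ($0$-truncated objects are closed under limits). I would rewrite the paragraph to lead with this and drop the appeal to left exactness of truncation. A minor further point: when invoking the correspondence between left exact reflective subcategories of a presheaf $1$-topos and sheaf subtoposes, it is cleanest to note that the localization is accessible (it is, since $\cE_S$ is a presentable $\infty$-category and truncation is an accessible localization), so you are squarely in the territory where the correspondence with Grothendieck topologies applies.
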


In fact every Grothendieck 1-topos is a 0-truncation of a model topos, much like every locale arises as the $-1$-truncated objects in a Grothendieck 1-topos. 

\begin{ex}[{\cite[11.2]{rezk}}] Any Grothendieck site, the data of which is given by a small 1-category $\cC$ with a collection of covering sieves $S \cto \yo_c \in \Set^{\cC^\op}$, may be regarded as a model site by regarding sets as discrete spaces. Then the model category defined by localizing simplicial presheaves  $\sSet^{\cC^\op}$ at the set of covering sieves is a model topos whose $0$-truncation is the 1-category  of sieves on the 1-site.
\end{ex}

The reason that the indexing category in a model topos is allowed to be simplicially enriched, and not just a 1-category, is to allow model topoi whose $k$-truncations differ even though their 0-truncations are the same. See \cite[\S 11]{rezk} for discussion and many more examples.

\section{The \texorpdfstring{$\infty$}{infinity}-topos semantics of homotopy type theory}\label{sec:topos-semantics}

We now turn to Shulman's paper \cite{shulman}.

\subsection{Type-theoretic model toposes}

Shulman introduces the following axiomatization to collect the hypotheses he requires. 

\begin{defn}\label{defn:ttmt} A \textbf{type-theoretic model topos} is a model category such that:
  \begin{enumerate}
    \item The underlying category is a Grothendieck 1-toposes.
    \item The model category is a right proper simplicial Cisinski model category: the cofibrations are the monomorphisms and the model structure is proper, simplicial, and combinatorial.
    \item The underlying category is simplicially locally cartesian closed, which amounts to the additional requirement that pullbacks preserve simplicial tensors.
    \item\label{itm:notion-of-fibered} There is a locally representable and relatively acyclic notion of fibered structure $\FF$ so that $|\FF|$ is the class of all fibrations.
  \end{enumerate}
\end{defn}

The final axiom, which will be unpacked in \S\ref{ssec:fibered}, ensures that there is a regular cardinal $\lambda$ so that $\cE$ has a fibrant univalent universes of relatively $\kappa$-presentable fibrations for any regular cardinal $\kappa \triangleright \lambda$.\footnote{Again, we'll largely sweep size issues on the rug, but here are the definitions: an object is $\kappa$-presentable if its covariant representable preserves $\kappa$-filtered colimits. A morphism is relatively $\kappa$-presentable if its pullback over any $\kappa$-presentable object is $\kappa$-presentable. Here $\kappa \triangleright\lambda$ means that $\kappa > \lambda$ is so that any $\lambda$-presentable category is $\kappa$-presentable.}

\begin{thm}\label{thm:ttmt-interpretation} For any type-theoretic model topos $\cE$, there is a regular cardinal $\lambda$ so that $\cE$ interprets Martin-L\"of type theory with the following structure:
  \begin{enumerate}
  \item $\Sigma$-types, a unit type, $\Pi$-types with function extensionality, identity types, and binary sum types;
  \item the empty type, the natural numbers type, the circle type, the sphere types, and other specific ``cell complexes'';
  \item as many universe types as there are inaccessible cardinals larger than $\lambda$, closed under the type formers (i), containing the types (ii), and satisfying the univalence axiom;
  \item\label{itm:rec-hits} $W$-types, pushout types, truncations, localizations, James constructions, and many other recursive higher inductive types.
  \end{enumerate}
  \end{thm}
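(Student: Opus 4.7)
The plan is to produce contextual categories $\cE_{U_\kappa}$ via Voevodsky's Construction \ref{cons:universe-category}, one for each appropriate regular cardinal $\kappa \triangleright \lambda$, and equip each with logical structure corresponding to items (i)--(iii), leaving (iv) to a separate argument. The first task is to obtain the universes themselves: axiom \eqref{itm:notion-of-fibered} of Definition \ref{defn:ttmt} furnishes, for each such $\kappa$, a morphism $\pi_\kappa \colon \tilde{U}_\kappa \fto U_\kappa$ weakly classifying the relatively $\kappa$-presentable fibrations. The ``locally representable'' clause realizes the universe as an actual morphism in $\cE$; the ``relatively acyclic'' clause packages a realignment property in the sense of Definition \ref{defn:classifying-fibration}, from which fibrancy of $U_\kappa$ follows via the fibration extension property (Lemma \ref{lem:FEP-fibrant}), and univalence follows from the equivalence extension property of Definition \ref{defn:EEP}, generalizing the simplicial arguments of \S\ref{ssec:fibrancy} and \S\ref{ssec:univalence} using right properness and the fact that cofibrations are monomorphisms. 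For each pair of inaccessibles $\lambda < \kappa$, the universe $U_\lambda$ is itself $\kappa$-small and hence classified by a point $u_\lambda \colon 1 \to U_\kappa$, yielding the internal universe structure of Section 5.3.

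For the type formers in (i), I would equip each universe with the corresponding algebraic structure and invoke a schema analogous to Theorem \ref{thm:id-structure}: the universe structure on $U_\kappa$ is used to transport a universal construction into a strictly substitution-stable one on $\cE_{U_\kappa}$. Identity types come from the fibered path-space factorization \eqref{eq:rel-path-space-revisited} of $\pi_\kappa$. $\Sigma$-types come from composition of display maps, which remain fibrations with $\kappa$-presentable fibers. $\Pi$-types come from pushforward in the locally cartesian closed 1-topos, with function extensionality guaranteed by the fact that pushforward along a fibration preserves trivial fibrations (using right properness and that cofibrations are monomorphisms, after \cite{GS}). The unit type is the terminal object, and binary sums use the disjoint coproducts of the underlying Grothendieck 1-topos. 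Closure of $U_\lambda$ under each of these operations reduces to verifying that the operation sends relatively $\lambda$-presentable fibrations to relatively $\lambda$-presentable fibrations, which follows from standard accessibility arguments.

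For (ii), the empty type is the initial object of the 1-topos and the natural numbers type is the natural numbers object, constructed as a $W$-type; the circle, spheres, and other specified cell complexes are instances of the higher inductive types handled by (iv). For (iv) I would appeal to the general construction of higher inductive types in sufficiently structured simplicial model categories developed by Lumsdaine--Shulman \cite{lumsdaine-shulman,shulman}, which produces the desired display maps together with their strict elimination data by refining the cofibration--trivial fibration factorization into an algebraic weak factorization system whose free algebras serve as the required ``cell complexes.''

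The main obstacle will be verifying axiom \eqref{itm:notion-of-fibered} of Definition \ref{defn:ttmt} itself: that every type-theoretic model topos admits a locally representable, relatively acyclic notion of fibered structure whose underlying class is the given class of fibrations. By the characterization of Theorem \ref{thm:model-topos}, this amounts to exhibiting such a structure in every left exact localization of an injective simplicial presheaf model category, and in particular requires refining the injective fibrations into an algebraic form interacting well with the locally cartesian closed structure of the presheaf topos and with the local class of weak equivalences after localization. Overcoming this obstacle is the technical core of Shulman's \cite{shulman} argument; once it is in hand, the strict coherence demanded by the categorical semantics is automatic from Voevodsky's universe machinery, and the interpretation follows as sketched above.
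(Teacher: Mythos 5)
Your proposal takes a genuinely different route from the one the paper attributes to Shulman, and it also conflates the hypotheses of Theorem~\ref{thm:ttmt-interpretation} with a separate later problem.

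On coherence: you propose to build contextual categories $\cE_{U_\kappa}$ via Voevodsky's Construction~\ref{cons:universe-category} and to transport universal algebraic structures on $\pi_\kappa$ into strictly stable ones via a schema generalizing Theorem~\ref{thm:id-structure}. The digression immediately following the statement in the paper explicitly reports that Shulman does \emph{not} do this: he uses the Lumsdaine--Warren ``local universes'' coherence construction, adapted in an appendix of \cite{shulman} to Awodey's natural models, precisely because the two roles a universe plays --- strictifying substitution vs.\ serving as a type-theoretic universe --- can and should be disentangled. This difference is not cosmetic. In the $\cE_{U_\kappa}$ route every type in the model is a relatively $\kappa$-presentable fibration classified by a single global universe, so the entire universe hierarchy must be packed inside a fixed $\kappa$-cutoff; by contrast local universes put no global bound on types, and allow one to pass from merely \emph{weakly} stable type-forming operations in $\cE$ to strictly stable ones without ever having to equip a single universal fibration with identity-, $\Sigma$-, $\Pi$-, binary-sum-, and universe-structures and then verify that each internal $U_\lambda$ is closed under all of them. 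If you want to carry out your route, that construction is where most of the work lies; dismissing it as ``automatic from Voevodsky's universe machinery'' elides the content of the theorem and of the paper's discussion of the coherence problem.

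On scope: the ``main obstacle'' you identify --- verifying axiom~\eqref{itm:notion-of-fibered} of Definition~\ref{defn:ttmt}, i.e., exhibiting a locally representable, relatively acyclic notion of fibered structure whose underlying class is the fibrations --- is a \emph{hypothesis} of Theorem~\ref{thm:ttmt-interpretation}, not something to be proved within it. The theorem asserts that a type-theoretic model topos interprets type theory; that every $\infty$-topos can be \emph{presented} by a type-theoretic model topos is a distinct result, established later via the closure theorems for slices, products, injective diagram categories, and left exact localizations, and it is there that one must manufacture the notion of fibered structure from cobar constructions and internal localizations. For the theorem at hand you simply get to assume it.
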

  
It is not yet known whether the universe is closed under the higher inductive types listed in \eqref{itm:rec-hits}.

\begin{dig}
  One needs a coherence theorem to strictify $\cE$ into an actual model of type theory. This achieved through the ``local universes'' technique of Lumsdaine-Warren \cite{lumsdaine-warren}. Lumsdaine and Warren observe that universes are used for two distinct purposes in the simplicial model: ``firstly to obtain coherence of the model; and secondly to become type-theoretic universes within the model.'' They observe ``It turned out that not only may the two aspects be entirely disentangled, but moreover, the coherence construction may be modified to work without a universe.''
  
They work with \emph{comprehension categories} \cite{jacobs} as a categorical model of type theory, which is a suitably structured fibration of categories---in which the base category has the contexts as objects and the substitutions as morphisms and the fibers encode dependent types in that context---together with further algebraic structure implementing the logical rules. Direct models of syntax are given by split comprehension categories with strictly stable logical structure. They replace a comprehension category by a split comprehension category using ``delayed substitution.'' The split comprehension category has the same category of contexts as the original one but the types in context $\Gamma$ are formally defined to be maps $\name{A} \colon \Gamma \to U_A$ valued in a ``local universe,'' which is just another context, together with a type $E_A$ in that context. The triple $(\name{A},U_A,E_A)$ may be regarded as a surrogate for the pullback $[A] \coloneqq E_A[\name{A}]$. The advantage of using local universes is that they are allowed to vary when constructing the logical operations. For instance, for sums one can take $V_{A+B} \coloneqq V_A \times V_B$ with $E_{A+B} \coloneqq E_A[\pi_1] +_{V_{A+B}} E_B[\pi_2]$, with $\name{A+B} \colon \Gamma \to V_{A+B}$ the evident map. Since substitution corresponds to precomposition ``on the left'' but this logical stuff happens ``on the right,'' such constructions are strictly stable.

The main theorem is that a full comprehension category whose underlying category has certain products and pushforwards has an equivalent full split comprehension category so that if the comprehension category has weakly stable categorical operations corresponding to the various type theoretic operations, then the equivalent split comprehension category has strictly stable versions of the same. In an appendix \cite[\S A]{shulman}, Shulman adapts this coherence theorem to Awodey's natural models \cite{awodey} and extends the coherence theorem to universes of types.
\end{dig}

Theorem \ref{thm:ttmt-interpretation} tells us homotopy type theory can be interpreted into any type-theoretic model topos. It remains to relate type-theoretic model toposes to Rezk's model toposes.

\begin{thm}[{\cite[6.4]{shulman}}]\label{thm:ttmt-descent} A type-theoretic model topos has descent.
\end{thm}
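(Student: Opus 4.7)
The strategy is to reduce Rezk's descent condition (Definition \ref{defn:descent}) to the statement that the fibrant univalent universes supplied by axiom (iv) of Definition \ref{defn:ttmt} act as $\infty$-categorical object classifiers. For every sufficiently large regular $\kappa$, axiom (iv) gives a fibrant universal $\kappa$-small fibration $\pi_\kappa \colon \tilde U_\kappa \fto U_\kappa$ that is univalent in the sense of Definition \ref{defn:univalence}. The plan is to show that the existence of such universes for a cofinal family of $\kappa$ is equivalent to descent.

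The first key input is that $\Map(-, U_\kappa) \colon \cE^\op \to \sSet$ carries homotopy colimits in $\cE$ to homotopy limits of Kan complexes: all objects of $\cE$ are cofibrant (cofibrations are the monomorphisms), $U_\kappa$ is fibrant, and the enrichment interacts with the model structure as in Definition \ref{defn:simp-model-cat}, so that a projectively cofibrant replacement of an indexing diagram becomes injectively fibrant after applying $\Map(-, U_\kappa)$. The second key input is that weak classification (Corollary \ref{cor:weak-classifying}), together with the identification of the path space $U_\kappa^{\Delta^1}$ with $\Eq(\tilde U_\kappa)$ afforded by univalence (Lemma \ref{lem:Eq}), upgrades $\Map(B, U_\kappa)$ to a model for the core of the $\infty$-groupoid $\FF^\kappa(B)^\cong$ of $\kappa$-small fibrations over $B$. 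Combining these gives, for every homotopy colimit $X \simeq \hocolim_i X_i$ involving only $\kappa$-small data, a chain of weak equivalences
\[ \FF^\kappa(X)^\cong \;\simeq\; \Map(X, U_\kappa) \;\simeq\; \mathrm{holim}_i\, \Map(X_i, U_\kappa) \;\simeq\; \mathrm{holim}_i\, \FF^\kappa(X_i)^\cong. \]

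From this ``object classifier'' form of descent I would deduce both clauses of Definition \ref{defn:descent}. For clause (ii), a natural transformation $F \to G$ of $\cI$-diagrams whose naturality squares are homotopy pullbacks is the same data as a single $\kappa$-small fibration over $\hocolim G$ pulled back along each cone map $G_i \to \hocolim G$, so the induced square from $F_i \to \hocolim F$ over $G_i \to \hocolim G$ is forced to be a homotopy pullback by the universal property of the classifying map into $U_\kappa$. Clause (i) follows as the special case of (ii) in which one factors the given morphism $Y \to X$ as a trivial cofibration followed by a $\kappa$-small fibration (using right properness and the Cisinski hypothesis), then applies simplicial local Cartesian closure to identify pullbacks of the fibered cotensors used to compute the homotopy colimits.

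The main obstacle is sharpening the comparison $\Map(B, U_\kappa) \simeq \FF^\kappa(B)^\cong$ from the weak classification plus univalence proven in \S\ref{ssec:univalence} to a genuine equivalence of Kan complexes, together with the size bookkeeping required to enlarge $\kappa$ so that every object and fibration under consideration is $\kappa$-small. The latter is controlled by combinatoriality of $\cE$, which presents every object as a $\lambda$-filtered colimit of $\lambda$-presentable objects for arbitrarily large $\lambda$; the former is a familiar but delicate upgrade from $\pi_0$ and $\pi_1$ statements to an equivalence of spaces, carried out most cleanly by writing both sides as right fibrations over a common base and comparing their fibers. Once these technical points are in hand, descent reduces to the formal compatibility of enriched representables into fibrant objects with homotopy colimits.
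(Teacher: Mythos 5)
The paper (following Shulman) proves descent directly from the notion of fibered structure axioms: clause (i) of Definition \ref{defn:descent} falls out of the model category axioms alone, and clause (ii) is proved from local representability (via the ``stack for cell complexes'' property) together with relative acyclicity of $\FF$ --- the universe is never invoked. Your proposal instead deduces descent from the claim that $U_\kappa$ is an $\infty$-categorical object classifier. That equivalence between ``has descent'' and ``has object classifiers'' is legitimate $\infty$-topos theory, so the high-level strategy is not wrong, but as a proof of this theorem it is a genuinely different route and, as written, has real gaps.

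The step you acknowledge leaving open --- upgrading weak classification (Corollary \ref{cor:weak-classifying}) plus univalence (Lemma \ref{lem:Eq}) to an actual equivalence of spaces between $\Map(B,U_\kappa)$ and the $\infty$-groupoid of $\kappa$-small fibrations over $B$ --- is the bulk of the content and is not established anywhere in the paper. Note also that your $\FF^\kappa(B)^\cong$ denotes the $1$-groupoid of fibrations and \emph{isomorphisms}, which is not the $\infty$-groupoid (fibrations and \emph{weak equivalences}) that the object-classifier statement concerns; the conflation hides exactly what has to be proved. Beyond that, your derivation of clause (ii) begs the question: asserting that a cartesian transformation $F\to G$ ``is the same data as'' a single fibration over $\hocolim G$ pulled back along the cone maps is precisely what clause (ii) claims; the actual work is to glue the classifying maps using the $\hocolim$-to-$\mathrm{holim}$ property of $\Map(-,U_\kappa)$ and then verify the resulting fibration has the correct restrictions, which you do not do. Clause (i) is not a ``special case of (ii)'' --- pullback along an arbitrary map is a different condition from a cartesian transformation of diagrams --- and in the paper it is handled independently of the universe from the model category axioms. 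Finally, in Shulman's paper the descent theorem (his 6.4) precedes the proofs that $U_\kappa$ is fibrant and univalent; before using those facts to prove descent you would have to check that their proofs do not themselves depend on descent, or the argument is circular.
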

\begin{proof}
It is straightforward to argue from the model category axioms that homotopy colimits in a type-theoretic model topos are stable under homotopy pullback. A more delicate argument is required for the second condition of Definition \ref{defn:descent}, involving both the local representability and the relative acyclicity of the notion of fibered structure.
\end{proof}

With this result in hand, it follows easily that:

\begin{cor} Every type-theoretic model topos is a model topos.
\end{cor}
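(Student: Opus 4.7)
The plan is to verify both defining clauses of a model topos for an arbitrary type-theoretic model topos $\cE$: that $\cE$ admits a small presentation, and that $\cE$ has descent. The descent clause is supplied directly by Theorem \ref{thm:ttmt-descent}, so the real content is the small presentation.

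For the small presentation, I would appeal to Dugger's theorem \cite{dugger-combinatorial}, quoted in the discussion surrounding Theorem \ref{thm:model-topos}: every combinatorial model category is Quillen equivalent to a left Bousfield localization $\sSet^{\cD^\op}_S$ of the injective model structure on simplicial presheaves on some small simplicial category $\cD$ at a set of maps $S$. By clause (ii) of Definition \ref{defn:ttmt}, the model structure underlying a type-theoretic model topos is required to be combinatorial, so Dugger's theorem applies and produces the required presentation $(\cD, S)$.

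Combining these two ingredients, $\cE$ satisfies both defining clauses of a model topos, and hence is one by Theorem \ref{thm:model-topos}. There is no substantive obstacle here: this corollary is a bookkeeping step that packages descent (the only genuine labor, carried out in Theorem \ref{thm:ttmt-descent}) with the combinatoriality axiom baked into Definition \ref{defn:ttmt}, using Dugger's presentation theorem as the bridge. The only subtlety worth a sentence is that descent, as a property of a simplicial model category, should be invariant under Quillen equivalence so that it passes from $\cE$ to $\sSet^{\cD^\op}_S$ (equivalently, that the left adjoint of the localization is left exact, per Rezk's characterization); but this is exactly the compatibility Rezk establishes in proving his two characterizations equivalent, and so requires no additional argument here.
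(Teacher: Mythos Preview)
Your proposal is correct and follows the paper's proof essentially verbatim: combinatoriality (axiom (ii) of Definition \ref{defn:ttmt}) gives a small presentation via Dugger, and Theorem \ref{thm:ttmt-descent} gives descent, so $\cE$ is a model topos by definition. Your final paragraph's worry is superfluous: the two clauses are verified for $\cE$ itself, so nothing needs to be transported across the Quillen equivalence, and you should cite the definition rather than Theorem \ref{thm:model-topos} when concluding.
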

\begin{proof} Since a type-theoretic model topos is combinatorial, it has a small presentation by \cite{dugger-combinatorial} and Theorem \ref{thm:ttmt-descent} demonstrates that it has descent.
\end{proof}

To prove that homotopy type theory admits a model in any $\infty$-topos it remains to show that any model topos is Quillen equivalent to a type-theoretic one. Before we can approach this problem, we must first explain the final axiom \eqref{itm:notion-of-fibered} of Definition \ref{defn:ttmt}.

\subsection{Notions of fibered structure}\label{ssec:fibered}

We have seen one example of a notion of fibered structure, which we recall before giving the general definitions.

In a model category $\cE$, the (contravariant) \textbf{core of self-indexing} defines a groupoid-valued pseudofunctor as below-left:
\[ \begin{tikzcd}[row sep=tiny]  \cE^\op \arrow[r, "\EE"] & \GPD  & & \cE^\op \arrow[r, "\FF"] & \GPD  \\ B \arrow[r, maps to] & (\cE_{/B})^\cong\arrow[d, "f^*"] & &  B \arrow[r, maps to] & (\sF_{/B})^\cong\arrow[d, "f^*"] \\ [7pt] A \arrow[u, "f"] \arrow[r, maps to] & (\cE_{/A})^\cong & & A \arrow[u, "f"] \arrow[r, maps to] & (\sF_{/A})^\cong\end{tikzcd}\]  Since the class of fibrations $\fib$ in a model category are always pullback stable, they similarly assemble into a pseudofunctor as above right.

The inclusion $\iota \colon \FF\hookrightarrow\EE$ defines a 1-cell in the 2-category $\PSH(\cE)$ of groupoid-valued pseudofunctors, pseudonatural transformations, and modifications with the following properties: 
\begin{enumerate}
  \item $\iota$ is a strict natural transformation, and
  \item each component of $\iota$ is a discrete fibration of groupoids.\footnote{Any componentwise discrete fibration in $\PSH(\cE)$ is automatically an \textbf{internal discrete fibration} in the 2-category $\PSH(\cE)$, meaning any 2-cell with codomain $\EE$ whose codomain 1-cell factors through $\iota$ has a unique lift with specified codomain 1-cell.}
\end{enumerate}
In what follows, we continue to write $\EE$ for the core of self indexing but allow $\FF$ to be any notion of fibered structure, satisfying the following definition:

\begin{defn} A \textbf{notion of fibered structure} on $\cE$ is a strict  discrete fibration $\phi \colon \FF \to \EE$ in $\PSH(\cE)$ that has small fibers.
\end{defn}

Recall the fibers of a discrete fibration are sets. Thus $\phi \colon \FF \to \EE$ equips each $f \colon X \to Y$ in $\EE(Y)$ with a set of $\FF$-\textbf{structures} on $f$ so that pullbacks induce functions on $\FF$-structures. We refer to a morphism with a chosen $\FF$-structure as an $\FF$-\textbf{algebra}, while an $\FF$-\textbf{morphism} is a pullback square 
\[ \begin{tikzcd} X' \arrow[d, "f'"'] \arrow[r] \arrow[dr, phantom, "\lrcorner" very near start] & X \arrow[d, "f"] \\ Y' \arrow[r] & Y \end{tikzcd}\] in which the $\FF$-algebra structure on $f'$ is induced from the $\FF$-algebra structure on $f$.

Shulman gives a long list of examples in \cite[\S 3]{shulman}, but we will only need a few.

\begin{ex} If $\FF \hookrightarrow \EE$ is the inclusion of a subfunctor, then $\FF$ is just a pullback stable class of morphisms in $\cE$. We call this a \textbf{full notion of fibered structure}. \end{ex}

In particular, if $\cE$ is a model category, the fibrations define a notion of fibered structure in this way.

\begin{ex}\label{ex:fun-fact} A \textbf{functorial factorization} is a functor $\vec{E} \colon \cE^\2 \to \cE^\3$ that defines a section of the composition functor $\circ \colon \cE^\3 \to \cE^\2$. On morphisms it carries a commutative square as below-left to a commutative rectangle as below-right:
  \[\begin{tikzcd}
    X' \arrow[d, "f'"'] \arrow[r, "x"] & X \arrow[d, "f"] \\ Y' \arrow[r, "y"'] & Y 
  \end{tikzcd} \quad = \quad
  \begin{tikzcd} X' \arrow[r, "x"] \arrow[d, "Lf'"'] & X \arrow[d, "Lf"] \\ Ef' \arrow[r, "{E(x,y)}"] \arrow[d, "{Rf'}"'] & Ef \arrow[d, "Rf"] \\ Y' \arrow[r, "y"'] & Y
  \end{tikzcd}
       \] 
       In particular, a functorial factorization gives rise to a pair of endofunctors $L,R \colon \cE^\2 \to \cE^\2$, which send a map to its left and right factors, respectively. We define an $\RR_{{E}}$-structure on $f$ to be a lift
       \[\begin{tikzcd} X \arrow[d, "Lf"'] \arrow[r, equals] & X \arrow[d, "f"] \\ Ef \arrow[ur, dashed, "r_f"] \arrow[r, "Rf"'] & Y
       \end{tikzcd}\]
       in the canonical commutative square against its left factor. As the choice of such a lift for $f$ induces one for any pullback, this defines a notion of fibered structure.
\end{ex}

\begin{ex} If $\FF_1 \to \EE$ and $\FF_2 \to \EE$ are notions of fibered structure, then so is the pullback $\FF_1 \times_\EE \FF_2 \to \EE$, where an $(\FF_1 \times_\EE \FF_2)$-structure on a morphism is just a pair given by an $\FF_1$-structure and an $\FF_2$-structure.
\end{ex}

\begin{ex} When $\cE$ is locally presentable and locally cartesian closed, there is a $\lambda$ so that for any $\kappa \triangleright \lambda$, the relatively $\kappa$-presentable morphisms define a full notion of fibered structure $\EE^\kappa\hookrightarrow \EE$.
\end{ex}

\subsection{Universes in model categories}

A suitable notion of fibered structure on a suitable model category will have a universe in the sense of the following definition.

\begin{defn}\label{defn:fib-universe} Let $\cE$ be a model category and $\FF$ a notion of fibered structure. A \textbf{universe} for $\FF$ is a cofibrant object $U$, together with an $\FF$-algebra $\pi \colon \tilde{U} \fto U$ so that the map  $\pi \colon \cE(-,U) \fwto \FF$ is an acyclic fibration, meaning that this map has the right lifting property against all cofibrations $i \colon A \rightarrowtail B$ in $\cE$
  \[
    \begin{tikzcd}
      \cE(-,A) \arrow[d, "i\cdot-"'] \arrow[r, "h"] & \cE(-,U) \arrow[d, tfibarrow] \\ \cE(-,B) \arrow[r, "p"'] \arrow[ur, dashed, "k"'] & \FF\end{tikzcd}\]
\end{defn}

Explicitly, this means that $\pi$ satisfies realignment for $\FF$-algebras and $\FF$-morphisms: given $\FF$-algebras $p$ and $q$ so that the solid-arrow squares displayed below are both $\FF$-morphisms, there exist arrows defining a third square that is a pullback and an $\FF$-morphism and makes the diagram commute:
\[
\begin{tikzcd}
  D \arrow[dd, two heads, "q"'] \arrow[rr] \arrow[drr, phantom, "\lrcorner" very near start] \arrow[dddr, phantom, "\lrcorner" very near start]\arrow[dr] & & \tilde{U} \arrow[dd, two heads, "\pi"] \\ & E  \arrow[dr, phantom, "\lrcorner" very near start]\arrow[ur, dashed] & ~ \\ A \arrow[dr, tail, "i"'] \arrow[rr, "h" near end] & & U \\ & B \arrow[ur, dashed, "k"'] \arrow[from=uu, two heads, crossing over, "p"' near start]
\end{tikzcd}  
\]

\begin{defn} A notion of fibered structure $\FF$ is \textbf{locally representable} if the strict discrete fibration $\phi \colon \FF \to \EE$ is representable, meaning that for any $Z \in \cE$ and pullback
  \[
    \begin{tikzcd} \PP \arrow[r] \arrow[d] \arrow[dr, phantom, "\lrcorner" very near start] & \FF \arrow[d, "\phi"] \\ \cE(-,Z) \arrow[r, "f"'] & \EE\end{tikzcd}\]
the object $\PP$ is isomorphic to a representable.
\end{defn}

\begin{ex}
For a full notion of fibered structure associated to a class of fibrations in a presheaf category, local representability is equivalent to the condition that a map is a fibration if and only if its pullbacks to representables are fibrations \cite[3.16]{shulman}. In particular, by Remark \ref{rmk:kan-local}, the Kan fibrations are locally representable as a full notion of fibered structure.
\end{ex}

These notions are connected in the following theorem.

\begin{thm}[{\cite[5.10]{shulman}}] In a Grothendieck 1-topos with a combinatorial model structure in which all cofibrations are monomorphisms, any small-groupoid-valued\footnote{Recall that any $\phi \colon \FF \to \EE$ must have small fibers: at most a set's worth of $\FF$-structures on a given $f \colon X \to Y$. This condition means that there is at most a set's worth of isomorphism classes of $\FF$-algebras with codomain $Y$. This might be achieved by replacing $\FF$ by $\FF^\kappa$ for some regular cardinal $\kappa$.} locally representable notion of fibered structure has a universe.
\end{thm}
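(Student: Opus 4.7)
The plan is to emulate the construction of the universal Kan fibration from Section \ref{sec:universe}: first produce a large weak classifier $\varpi \colon \tilde{V} \to V$ of $\kappa$-small maps in $\cE$ satisfying realignment against monomorphisms, then use local representability of $\FF$ to define $U$ as the classifier of ``$\FF$-structures on pullbacks of $\varpi$,'' and finally deduce realignment for $\pi \colon \tilde{U} \to U$ from that of $\varpi$ together with the universal property of $U$. Cofibrancy of $U$ is automatic since every object of $\cE$ is cofibrant when all cofibrations are monomorphisms.

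For the first step, I would construct a Hofmann-Streicher-type universe $\varpi \colon \tilde{V} \to V$ in $\cE$ classifying $\kappa$-small maps and satisfying the analogue of Lemma \ref{lem:Hofmann-Streicher-realignment} with respect to all monomorphisms in $\cE$. When $\cE$ is a presheaf category this is essentially the construction reviewed in Section \ref{sec:universe}; for a general Grothendieck 1-topos, which presents as a left exact localization of a presheaf category, one adapts that construction by performing it in the presenting presheaf category and then transferring to $\cE$ via the left exact reflection, which preserves the relevant pullbacks and monomorphisms.

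For the second step, I would use local representability. The morphism $\varpi$ represents an element of $\EE(V)$ and hence a pseudonatural transformation $\cE(-,V) \to \EE$. Pulling $\phi \colon \FF \to \EE$ back along this transformation yields a discrete fibration $\PP \to \cE(-,V)$ in $\PSH(\cE)$, which by hypothesis is represented by an object $U \in \cE$ equipped with a canonical structure map $U \to V$. Unwinding the $2$-pullback, a morphism $B \to U$ is precisely the data of a $V$-classifier $f \colon B \to V$ together with an $\FF$-structure on the classified map $f^*\varpi$; specializing to $B = U$ and $f$ the canonical structure map, the associated universal $\FF$-structure on $\pi \colon \tilde{U} \coloneqq U \times_V \tilde{V} \to U$ exhibits $\pi$ as a canonical $\FF$-algebra.

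For the third step, consider a cofibration $i \colon A \cto B$, an $\FF$-algebra $p \colon E \to B$, and a $U$-classifier $h \colon A \to U$ for the pullback $i^*p$. The composite $A \to U \to V$ classifies $i^*p$, so applying realignment for $\varpi$ extends it to a $V$-classifier $B \to V$ for $p$. The $\FF$-structure on $p$ then corresponds, by the universal property of $U$, to a unique lift $k \colon B \to U$ over this extended $V$-classifier, and the compatibility $k \cdot i = h$ holds because both restrictions to $A$ encode the same $\FF$-structure on $i^*p$ --- namely the restriction of $p$'s --- and lifts are uniquely determined by their encoded $\FF$-structure since the fibers of $\phi$ are discrete. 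The main obstacle is the first step: verifying realignment for $\varpi$ in a general Grothendieck $1$-topos requires ensuring that both the $\kappa$-small map classifier and its realignment against monomorphisms survive passage through the left exact reflection from the presenting presheaf topos. The combinatoriality and ``all cofibrations are monomorphisms'' hypotheses are precisely what make this transfer possible; steps two and three are then formal consequences of local representability together with the discreteness of the fibers of $\phi$.
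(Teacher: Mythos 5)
Your proposal takes a genuinely different route from the paper's proof. The paper (following Shulman) constructs the universe $\pi \colon \tilde{U} \to U$ directly by an adaptation of Quillen's small object argument, using local representability and smallness to show that $\FF \colon \cE^\op \to \GPD$ is a \emph{stack for cell complexes} (preserves coproducts, pushouts of cofibrations, and transfinite composites of cofibrations bicategorically), which is what lets one glue $\FF$-algebras together transfinitely and land on a cofibrant classifier. You instead propose to first build an ambient small-map classifier $\varpi \colon \tilde V \to V$ with realignment, then pull $\FF$ back along the pseudonatural transformation $\cE(-,V) \to \EE^\kappa$ and invoke local representability to obtain $U$. Granting your step one, your steps two and three are formally sound: $\cE(-,U)$ is the pullback of $\FF^\kappa$ along $\cE(-,V)$, a lifting problem against $\cE(-,U) \to \FF^\kappa$ decomposes into one against $\cE(-,V) \to \EE^\kappa$ (solved by realignment for $\varpi$) plus the universal property of the pullback, and the discreteness of the fibers of $\phi$ handles the compatibility. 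This factorization is clean and, in a setting where $\varpi$ is already available, is arguably simpler than the stack argument.

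The genuine gap is your step one, and it is not a small one. The assertion that a realignment-satisfying $\kappa$-small-map classifier exists in an arbitrary Grothendieck $1$-topos because ``the left exact reflection preserves the relevant pullbacks and monomorphisms'' is not an argument. The Hofmann--Streicher universe $\varpi_{\mathrm{PSh}}$ lives in the presenting presheaf topos, and neither $V_{\mathrm{PSh}}$ nor the chosen pullbacks of $\varpi_{\mathrm{PSh}}$ will generally be sheaves, so you cannot simply restrict; and sheafifying $\varpi_{\mathrm{PSh}}$ does not yield a classifier, because for a small map of sheaves $p$ the pullback of $a\varpi_{\mathrm{PSh}}$ along $a\name{p}$ need not recover $p$, nor is realignment obviously preserved by the reflection. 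Producing a small-map classifier with realignment in a general Grothendieck topos is a substantial theorem in its own right (this is essentially the content of Gratzer--Shulman--Sterling \cite{GSS}, cited in the paper precisely for ``realignment''). Shulman's small object argument is designed exactly to sidestep this: it builds $U$ by transfinitely gluing $\FF$-algebras along the generating cofibrations, so it never needs a pre-existing classifier of all $\kappa$-small maps. One further small imprecision: ``all cofibrations are monomorphisms'' says cofibrations $\subseteq$ monos, which does not by itself force every object (in particular $U$) to be cofibrant; you are implicitly using the stronger Cisinski hypothesis that the cofibrations are exactly the monomorphisms, which is what is in force in the paper's type-theoretic model topos setting and is also needed in the small object argument to ensure the classifier constructed there is cofibrant.
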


The proof is by an adaptation of Quillen's small object argument that uses the hypotheses on the notion of fibered structure $\FF$ to ensure that it defines a \textbf{stack for cell complexes}, meaning $\FF \colon \cE^\op \to \GPD$ preserves coproducts, pushouts of cofibrations, and transfinite composites of cofibrations in the weak bicategorical sense.

It remains to connect a notion of fibered structure on a model category $\cE$ to the fibrations in that model category. For any notion of fibered structure $\FF$ let $|\FF|$ be the image of the map $\phi \colon \FF  \to \EE$. Thus $|\FF| \hookrightarrow \EE$ is a full notion of fibered structure --- which, however, is not  generally locally representable, even if $\FF$ is --- and the $|\FF|$-algebras are the morphisms that admit some $\FF$-structure.

\begin{defn} A notion of fibered structure $\FF$ is \textbf{relatively acyclic} $\FF \to |\FF|$ is an acyclic fibration, meaning for any cofibration $i \colon A \cto B$ there is a lift
  \[
    \begin{tikzcd} \cE(-,A) \arrow[d, "i\cdot -"'] \arrow[r, "q"] & \FF \arrow[d, tfibarrow] \\ \cE(-,B) \arrow[r, "p"'] \arrow[ur, dashed] & {|\FF|}
    \end{tikzcd}
  \]
  That is, for any pullback
  \[
    \begin{tikzcd}
      D \arrow[r, "e"] \arrow[d, "q"'] \arrow[dr, phantom, "\lrcorner" very near start] & E \arrow[d, "p"] \\ A \arrow[r, tail, "i"'] &  B     
    \end{tikzcd}
  \] with $q,p$ both $\FF$-algebras and $i$ a cofibration there exists a new $\FF$-algebra structure on $p$ making the square an $\FF$-morphism.
\end{defn}

In practice, $\FF$ will be locally representable, and thus will admit a universe $U$, while $|\FF|$, which will be the full notion of fibered structure defined by the fibrations, may not be. When $\FF$ is relatively acyclic, we can compose the acyclic fibrations 
\[
  \begin{tikzcd} \cE(-,U) \arrow[r, "\pi", utfibarrow] & \FF \arrow[r, utfibarrow] & {|\FF|}
  \end{tikzcd}
\]
and regard $U$ as a universe for the fibrations satisfying realignment.

\begin{thm} Let $\cE$ be a right proper simplicial Cisinski model category, and $\FF$ a locally representable, relatively acyclic notion of fibered structure for which $|\FF|$ is the class of fibrations. Then there is a regular cardinal $\lambda$ so that for any regular cardinal $\kappa \triangleright\lambda$, there exists a relatively $\kappa$-presentable fibration $\pi \colon \tilde{U} \to U$ so that
  \begin{enumerate}
    \item Every relatively $\kappa$-presentable fibration is a pullback of $\pi$.
    \item $U$ is fibrant.
    \item $\pi$ satisfies the univalence axiom.
  \end{enumerate}
\end{thm}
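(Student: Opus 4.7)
The plan is to adapt the three-step strategy carried out for simplicial sets in Lecture II---construct a universe, verify fibrancy, and verify univalence---so that it uses only the axioms assembled in Definition \ref{defn:ttmt}. First, I would cut $\FF$ down to size by forming the pullback notion of fibered structure $\FF^\kappa \coloneqq \FF \times_\EE \EE^\kappa$, whose algebras are $\FF$-algebras that are moreover relatively $\kappa$-presentable. Choosing $\lambda$ compatibly with the accessibility data of $\cE$ and $\FF$, the notion $\FF^\kappa$ inherits local representability from $\FF$ and takes values in \emph{small} groupoids for every $\kappa \triangleright \lambda$, so the universe-construction theorem cited just before Definition \ref{defn:fib-universe} produces a cofibrant $U$ together with an $\FF^\kappa$-algebra $\pi \colon \tilde U \to U$ whose classifying map $\cE(-,U) \to \FF^\kappa$ is an acyclic fibration in $\PSH(\cE)$, i.e.\ satisfies the realignment property.

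For conclusion (i), I would compose this acyclic fibration with the acyclic fibration $\FF^\kappa \twoheadrightarrow |\FF^\kappa|$ supplied by relative acyclicity. Since $|\FF|$ is the class of fibrations, its target is the full notion of fibered structure of relatively $\kappa$-presentable fibrations, and evaluating realignment on the cofibration $\emptyset \cto B$ in the manner of Remark \ref{rmk:weak-classification} yields a classifying pullback square for any such fibration. The morphism $\pi$ itself is visibly a relatively $\kappa$-presentable fibration by construction.

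For conclusion (ii), I would re-run the proof of Lemma \ref{lem:FEP-fibrant} essentially verbatim---it uses only realignment together with the fact that $\pi$ weakly classifies relatively $\kappa$-presentable fibrations---to reduce fibrancy of $U$ to the fibration extension property of Definition \ref{defn:FEP} for this class. The simplicial verification via Quillen's minimal-fibration theorem is now replaced by a pushforward argument: given a relatively $\kappa$-presentable fibration over $A$ and an acyclic cofibration $j \colon A \cwto B$, one pushes forward along $j$, obtaining a fibration again because $\cE$ is right proper and cofibrations are monomorphisms (the Gambino--Sattler observation), and this pushforward pulls back along $j$ to the original. For conclusion (iii), the constructions of $P_BE$, $\isContr_B$, $\isEquiv_B$, $\Equiv_B$, and $\Eq(\tilde U)$ from \S\ref{ssec:univalence} transfer wholesale to any simplicially locally cartesian closed, right-proper simplicial model category in which $\cof = \mono$, and the reduction of univalence (Definition \ref{defn:univalence}) to the equivalence extension property of Definition \ref{defn:EEP} given there uses only realignment, fibrancy of $U$, and those categorical constructions. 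The equivalence extension property itself is established by the pullback-of-pushforward construction sketched at the end of \S\ref{ssec:univalence}.

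The main obstacle is the two extension properties. The simplicial-set proofs relied on minimal fibrations and cell-by-cell arguments that do not generalize, and must be replaced by pushforwards along (trivial) cofibrations. That these pushforwards preserve (trivial) fibrations uses precisely the Cisinski hypothesis $\cof = \mono$ together with right properness; local representability of $\FF$ is what makes the classifying universe exist in the first place, and relative acyclicity is what allows the same universe to weakly classify the full class of relatively $\kappa$-presentable fibrations rather than only those equipped with a choice of $\FF$-structure.
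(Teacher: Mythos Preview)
Your overall architecture matches the paper's proof: pass to $\FF^\kappa \coloneqq \FF \times_\EE \EE^\kappa$, which remains locally representable and relatively acyclic, build the universe, and then reduce fibrancy and univalence to the fibration and equivalence extension properties via realignment, exactly as in Lemmas~\ref{lem:FEP-fibrant} and the argument following Definition~\ref{defn:EEP}. Your treatment of conclusions (i) and (iii) is essentially what the paper does.

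There is, however, a genuine error in your proposed proof of the fibration extension property. You write that given a fibration $p \colon E \fto A$ and an acyclic cofibration $j \colon A \cwto B$, the pushforward $\Pi_j p$ is again a fibration ``because $\cE$ is right proper and cofibrations are monomorphisms (the Gambino--Sattler observation).'' But the Gambino--Sattler result \cite{GS}, as recalled in \S\ref{ssec:univalence}, says that pushforward along a \emph{fibration} preserves fibrations; it says nothing about pushforward along an acyclic cofibration. In fact the claim is false already in simplicial sets: for $j \colon \{0\} \cwto \Delta^1$ and $E$ any non-contractible Kan complex over $\{0\}$, the fibers of $\Pi_j E \to \Delta^1$ over $0$ and $1$ are $E$ and the terminal object respectively, so $\Pi_j E \to \Delta^1$ cannot be a Kan fibration. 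What \emph{is} true in this generality is that $\Pi_j$ preserves trivial fibrations (since pullback along any map preserves monomorphisms), which is why the paper's simplicial proof first reduced to extending a trivial fibration.

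The fix is already in the paper: the Remark at the end of \S\ref{ssec:univalence} records Stenzel's observation that the fibration extension property is a \emph{consequence} of the equivalence extension property in any right proper model category. One factors $j \cdot p$ as an acyclic cofibration followed by a fibration $q_2 \colon D_2 \fto B$, pulls $q_2$ back along $j$, uses right properness to see that the induced map $E \to j^*D_2$ is a weak equivalence between fibrations over $A$, and then applies the equivalence extension property to extend $E$ over $B$. So you should prove the equivalence extension property first (your pullback-of-pushforward sketch is correct for that), and then deduce the fibration extension property from it rather than attempting a direct pushforward.
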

\begin{proof}
  The results proven above are applied to $\FF^\kappa \coloneqq \FF \times_\EE \EE^\kappa$, which remains locally representable and relatively acyclic. As with the simplicial model, the condition of Definition \ref{defn:fib-universe} is used to reduce the verification of fibrancy of $U$ and univalence of $\pi$ to the fibration extension property and equivalence extension property of $\cE$, which can be proven by applying previously-developed techniques.
\end{proof}

\subsection{First examples of type-theoretic model topoi}

Now that we have developed some understanding of notions of fibered structure, we belatedly present some examples of type-theoretic model topoi.

\begin{ex}\label{ex:ttmt-spaces} The category of simplicial sets with its Kan-Quillen model structure is a type theoretic model topos with $\FF$ the full notion of fibered structure defined by the Kan fibrations.
\end{ex}

\begin{prop} If $\cE$ is a type theoretic model topos so is $\cE_{/X}$.
\end{prop}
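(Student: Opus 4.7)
The plan is to verify each of the four axioms of Definition \ref{defn:ttmt} for $\cE_{/X}$, exploiting the fact that the forgetful functor $\Sigma_X \colon \cE_{/X} \to \cE$ creates pullbacks, colimits, weak equivalences, cofibrations, and fibrations in the standard slice model structure. I would treat axioms (i)--(iii) as largely routine: slices of Grothendieck 1-toposes are Grothendieck 1-toposes; the slice model structure inherits combinatoriality and right properness from $\cE$ (since pullbacks in $\cE_{/X}$ are computed as in $\cE$); its simplicial tensor $K \otimes (Y \to X) \coloneqq (K \otimes Y \to Y \to X)$ and its cotensor (formed by an evident pullback) make it a simplicial model category with cofibrations equal to the monomorphisms of $\cE_{/X}$; and simplicial local cartesian closure transfers via the identification $(\cE_{/X})_{/(Y \to X)} \simeq \cE_{/Y}$, under which pullback and simplicial tensor in the slice of a slice are computed exactly as in $\cE_{/Y}$.

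The heart of the argument is axiom (iv). I would transport the notion of fibered structure: define $\FF_X \coloneqq \FF \circ \Sigma_X^{\op}$, so that an $\FF_X$-algebra structure on a morphism $g$ in $\cE_{/X}$ is simply an $\FF$-algebra structure on its underlying morphism $\Sigma_X g$ in $\cE$. Using the canonical isomorphism $\EE_{\cE_{/X}}(Y \to X) \cong \EE_\cE(Y)$, the map $\FF_X \to \EE_{\cE_{/X}}$ inherits from $\phi \colon \FF \to \EE_\cE$ the structure of a strict discrete fibration with small fibers. Because the fibrations of the slice model structure are exactly the $\Sigma_X$-preimages of fibrations in $\cE$, we obtain $|\FF_X|$ equal to the class of fibrations in $\cE_{/X}$. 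Relative acyclicity of $\FF_X \to |\FF_X|$ is then essentially immediate: a lifting problem against a cofibration in $\cE_{/X}$ is, after applying $\Sigma_X$, a lifting problem against a cofibration in $\cE$, and the solution provided by relative acyclicity of $\FF$ descends because $\Sigma_X$ preserves and reflects the structure in play.

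The main obstacle is local representability of $\FF_X$. Given a morphism $f \colon \cE_{/X}(-, Z \to X) \to \EE_{\cE_{/X}}$, the Yoneda lemma identifies $f$ with (an isomorphism class of) a morphism $g \colon A \to Z$ in $\cE$, and the pullback $\PP \to \cE_{/X}(-, Z \to X)$ of $\FF_X \to \EE_{\cE_{/X}}$ along $f$ consists, at an object $W \to X$, of pairs $(h, s)$ where $h \colon W \to Z$ is a morphism over $X$ and $s$ is an $\FF$-structure on $h^* g$. Local representability of $\FF$ yields an object $P \to Z$ in $\cE$ representing the analogous pullback over $\cE(-, Z)$, and I would then argue that $P$, equipped with the composite structure map $P \to Z \to X$, represents $\PP$ in $\cE_{/X}$: indeed, a morphism $(W \to X) \to (P \to X)$ in $\cE_{/X}$ is precisely a morphism $W \to P$ in $\cE$ whose composite with $P \to Z \to X$ agrees with the given structure map of $W$, which by definition of $P$ unpacks to exactly a pair $(h, s)$ as above with $h$ a morphism over $X$.

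Assembling these verifications then yields that $\cE_{/X}$ satisfies all four axioms of Definition \ref{defn:ttmt} and so is a type-theoretic model topos.
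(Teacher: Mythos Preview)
Your proposal is correct and follows exactly the approach the paper intends: the paper's proof is the single sentence ``All of the required structure is created by the forgetful functor $U \colon \cE_{/X} \to \cE$,'' and your argument is a faithful unpacking of that sentence, verifying each axiom by transporting the relevant structure along $\Sigma_X$. Your treatment of axiom (iv)---defining $\FF_X \coloneqq \FF \circ \Sigma_X^{\op}$ and checking local representability via the representing object $P \to Z \to X$---is precisely the content hidden in the word ``created.''
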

\begin{proof} All of the required structure is created by the forgetful functor $U \colon \cE_{/X} \to \cE$.
\end{proof}

\begin{prop}
Products of type-theoretic model toposes are also type-theoretic model toposes.
\end{prop}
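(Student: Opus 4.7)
The plan is to verify each clause of Definition \ref{defn:ttmt} for a (small) product $\cE = \prod_{i \in I} \cE_i$ of type-theoretic model toposes, with all structure defined componentwise.

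Clauses (i)--(iii) are essentially bookkeeping. The product of Grothendieck 1-toposes is a Grothendieck 1-topos (if $\cE_i \simeq \Sh(\cC_i, J_i)$, then $\cE \simeq \Sh(\coprod_i \cC_i, \coprod_i J_i)$). Equip $\cE$ with the componentwise model structure; right properness, combinatoriality (with generating cofibrations obtained by extending each generator of a factor by the initial object in all other coordinates), and simplicial enrichment, tensoring, and cotensoring all descend from the factors, while the cofibrations are exactly the componentwise monomorphisms, which coincide with the monomorphisms of the product category. Since pullbacks and simplicial tensors in $\cE$ are computed pointwise, their compatibility---simplicial local cartesian closure---is likewise inherited factor by factor.

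For clause (iv), given locally representable, relatively acyclic notions of fibered structure $\phi_i \colon \FF_i \to \EE_i$ on the factors with $|\FF_i|$ equal to the fibrations of $\cE_i$, I would form the pointwise product pseudofunctor $\FF \colon \cE^\op \to \GPD$, $(X_i)_i \mapsto \prod_i \FF_i(X_i)$, equipped with the evident strict discrete fibration $\phi \colon \FF \to \EE$ over the componentwise decomposition $\EE \cong \prod_i \EE_i$. Local representability follows because for any $Z = (Z_i)_i$ and classifying map $f \colon \cE(-,Z) \to \EE$ with components $f_i \colon W_i \to Z_i$, the pullback $\PP$ of $\phi$ along $f$ evaluates at $(X_i)_i$ as $\prod_i \PP_i(X_i)$, and this is represented by the tuple $(Y_i)_i$ formed from the individual representing objects $Y_i \in \cE_i$ furnished by local representability of each $\FF_i$. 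Relative acyclicity of $\FF \to |\FF|$ reduces to solving lifting problems against componentwise cofibrations coordinate by coordinate, using the relative acyclicity of each $\FF_i$. Finally, $|\FF|$ consists of morphisms admitting componentwise $|\FF_i|$-structures, i.e., tuples of fibrations, matching the class of product fibrations.

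The main obstacle is size bookkeeping: to ensure $\FF$ remains small-groupoid-valued and that combinatoriality survives, one needs the index set $I$ to be small and, when passing to the $\kappa$-presentable version, to choose a common regular cardinal $\kappa$ dominating the presentability bounds of all the factors, then work with $\FF^\kappa = \prod_i \FF_i^\kappa$. For finite products this is entirely automatic; for small-indexed products the cardinal bounds assemble without difficulty. Beyond this caveat, the argument is routine componentwise verification, and no new conceptual input is required.
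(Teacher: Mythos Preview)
Your proof is correct and takes the same approach as the paper: the paper's proof is the single sentence ``All of the required structure is defined pointwise,'' and your proposal is simply a careful elaboration of what that entails for each clause of Definition~\ref{defn:ttmt}.
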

\begin{proof} All of the required structure is defined pointwise.
\end{proof}

To demonstrate that all model toposes can be presented by type-theoretic model toposes, it remains to prove two more general closure theorems. We will show:
\begin{itemize}
  \item That if $\cE$ is a type theoretic model topos, then so is the injective model structure on $\cE^{\cD^\op}$ for any small simplicial category $\cD$.
  \item For any set $S$ of morphisms in a type-theoretic model topos $\cE$ so that $S$-localization is left exact, the localized model structure $\cE_{S}$ is again a type-theoretic model topos.
\end{itemize}

A key technical challenge to overcome in order to achieve these results is that the model categorical literature provided no explicit description of fibrations in the injective model structure, much less in localizations thereof. Shulman characterizes the injective fibrations as pointwise fibrations that are algebras for a certain pointed endofunctor called the \emph{cobar construction} introduced in \S\ref{ssec:cobar}. The corresponding problem for localizations is solved by showing that any left exact localization of an $\infty$-topos yields an \emph{internal left exact localization}, using results of \cite{ABFJ}, and then constructing an internal universe of local objects. This is the subject of \S\ref{ssec:localization}.

\subsection{Injective model structures and the cobar construction}\label{ssec:cobar}

Recall that the projective and injective model structures on a diagram category  $\cE^{\cD^\op}$ are lifted from a  model structure on $\cE^{\ob\cD}$ inherited pointwise from a model structure on $\cE$. In \S\ref{ssec:simp-presheaves}, we first attempted to define the cofibrations, weak equivalences, and fibrations in $\cE^{\cD^\op}$ as those simplicial natural transformations whose components lie in those classes in $\cE$. For a pointwise acyclic cofibration and a pointwise fibration it is possible to choose pointwise diagonal lifts for each object $d \in \cD$, but these do not assemble into a genuine natural transformation (since solutions to lifting problems are not unique). However, they do always form a \emph{homotopy coherent natural transformation}, due to the homotopical uniqueness of solutions to lifting problem:
\[
  \begin{tikzcd} A_d \arrow[d, "j_d"', tcofarrow] \arrow[r, "y_d"] & Y_d \arrow[d, "p_d", two heads] & \arrow[d, phantom, "\in\ \cE"] & & A \arrow[d, "j"', tcofarrow] \arrow[r, "y"] & Y \arrow[d, "p", two heads] & \arrow[d, phantom, "\in\ \cE^{\cD^\op}"]\\ X_d \arrow[ur, dashed, "\ell_d"'] \arrow[r, "b_d"'] & B_d &~ &  & X \arrow[r, "y"'] \arrow[ur, rightsquigarrow, dashed, "\ell"'] & B & ~
  \end{tikzcd}
\]
Thus, a strict lift could be obtained by somehow rectifying a homotopy coherent natural transformation to a strict one. And indeed, there is a natural bijection between homotopy coherent transformations $X \rightsquigarrow Y$ and strict natural transformations $B(L,UL,UX) \to Y$ whose domain is given by the \textbf{bar construction} defined below. A dual natural bijection classifies homotopy coherent natural transformations by strict natural transformations $X \to C(R, UR, UY)$ mapping into the \textbf{cobar construction}.

There are canonical maps $\epsilon_X \colon B(L,UL,UX) \to X$ and $\nu_Y \colon Y \to C(R,UR,UY)$ that form a pair of simplicial homotopy equivalences. If we have a strict transformation $s \colon X \to B(L,UL,UX)$ or $r \colon C(R,UR,UY) \to Y$ that respectively define a strict section and strict retraction of these natural maps then every homotopy coherent natural transformation with domain $X$ can be rectified to a strict one, and dually for homotopy coherent natural transformations with codomain $Y$. This gives a new characterization of the projective cofibrant objects $X$ and injective fibrant objects $Y$. The relative version of this construction will characterize the injective fibrations in an induced type-theoretic model topos structure on a diagram category.

\begin{dig} This is inspired by an analogous result in 2-category theory described in the dual case. In \cite{lack}, Lack characterizes the cofibrant objects in the projective model structure as those who admit a coalgebra structure for the dual pseudomorphism classifier. The 2-categorical cofibrant objects are called ``flexible'' so fibrant objects in the injective model structure might be called ``coflexible.''
\end{dig}

There is a slick definition of the bar and cobar constructions via the free adjunction of Schanuel and Street \cite{SchanuelStreet}. 

\begin{defn}[the free adjunction]\label{defn:free-adjunction} Let $\Adj$ denote the 2-category with two objects $+$ and $-$ and the four hom-categories 
  \[ \Adj(+,+) \coloneqq \DDelta_+\ ,\quad \Adj(-,-) \coloneqq \DDelta_+^\op , \quad \Adj(-,+) \coloneqq\DDelta_\top , \quad \Adj(+,-) \coloneqq \DDelta_\bot\]
  displayed in the following cartoon:
  \[
  \begin{tikzcd}[column sep=large]
  - \arrow[from=r, bend right, "\DDelta_\bot\cong\DDelta_\top^\op"'] \arrow[loop left, "\DDelta_+^\op"] \arrow[r, phantom, "\perp"] & + \arrow[loop right, "\DDelta_+"] \arrow[from=l, bend right, "\DDelta_\top\cong\DDelta_\bot^\op"']
  \end{tikzcd}
  \]
  Here $\DDelta_+$ is the category of finite ordinals and order preserving maps, $\DDelta$ is the full subcategory spanned by the non-empty ordinals, and $\DDelta_\top, \DDelta_\bot \subset \DDelta \subset \DDelta_+$ are the subcategories of order-preserving maps that preserve the top or bottom elements. Their intersection \[\DDelta_\bot \cap \DDelta_\top \cong \DDelta_+^\op\] is the subcategory of order-preserving maps that preserve both the top and bottom elements in each ordinal. This identifies $\DDelta_+^\op$ with the subcategory of ``intervals.'' 
\end{defn}  

\begin{defn}[co/simplicial co/bar construction] The ``freeness'' of the free adjunction has to do with the fact that an adjunction
\[ 
\begin{tikzcd} \cM \arrow[r, bend right, "U"'] \arrow[r, phantom, "\bot"] & \cN \arrow[l, bend right, "F"']
\end{tikzcd}
\]
is encoded by a unique 2-functor $\Adj \to \Cat$ defined by the data:
\[ \begin{tikzcd}[row sep=tiny] \Adj \arrow[r] & \Cat \\ - \arrow[r, mapsto] & \cM \\ + \arrow[r, mapsto] & \cN
\end{tikzcd} \qquad \begin{tikzcd}[row sep=tiny]\DDelta_+ \to \cN^\cN \\ \DDelta_+^\op \to \cM^\cM \\ \DDelta_\top \to \cN^\cM \\ \DDelta_\bot \to \cM^\cN
\end{tikzcd}
\]
The \textbf{two-sided simplicial bar construction} and \textbf{two-sided cosimplicial cobar construction} are the simplicial and cosimplicial objects defined by rearranging the action on homs on the comonad and monad side to define functors \begin{equation}\label{eq:bar-cobar}
  \begin{tikzcd}[column sep=huge]  \cM \arrow[r, "{B_\bullet(F,UF,U-)}"]&  \cM^{\DDelta^\op_+} & & \cN \arrow[r, "{C^\bullet(U,FU,F-)}"] & \cN^{\DDelta_+}
  \end{tikzcd}
\end{equation} while the actions of the other two homs define diagrams 
  \[
  \begin{tikzcd}[column sep=huge]  \cM \arrow[r, "{B_\bullet(UF,UF,U-)}"]&  \cN^{\DDelta_\top} & & \cN \arrow[r, "{C^\bullet(FU,FU,F-)}"] & \cM^{\DDelta_\bot}
  \end{tikzcd}
  \] 
\end{defn}

The interest of the latter two diagrams is that the presence of the co/aug\-ment\-a\-tions and ``extra co/degen\-er\-acies'' imply that the totalizations/geometric realizations of the underlying co/simplicial objects are given by the co/augmentations. 

\begin{defn}[bar and cobar construction]
  Suppose $\cM$ is simplicially enriched, tensored, and cocomplete. Then the \textbf{bar construction} is the  functor $B(F,UF,U-) \colon \cM \to \cM$ defined as the geometric realization of the underlying simplicial object of \eqref{eq:bar-cobar}. Dually, when $\cN$ is simplicially enriched and complete with cotensors, the \textbf{cobar construction} is the functor $C(U,FU,F-) \colon \cN \to \cN$ defined as the totalization of the underlying cosimplicial object of \eqref{eq:bar-cobar}.
\end{defn}

The former has a natural augmentation $\epsilon \colon B(F,UF,U-) \Rightarrow \id_\cM$ and the latter has a natural coaugmentation  $\nu \colon \id_\cN \Rightarrow C(U,FU,F-)$ and both are carried to simplicial homotopy equivalences by the functors $U$ and $F$ respectively. Thus their components are carried to maps that are weak equivalences when $\cN$ or $\cM$ are simplicial model categories, respectively. This leads to our desired characterization of the injective fibrations.

\begin{thm}\label{thm:inj-fib} Let $U \colon \cM \to \cN$ be a simplicial functor with both simplicial adjoints $L \dashv U \dashv R$ where $\cN$ is a simplicial model category whose cofibrations are the monomorphisms. Under a technical condition\footnote{Shulman asks that the unit of the monad is Quillen-cofibrant or equivalently that the counit of the comonad is Quillen-fibrant.}, then $f \colon X \to Y$ in $\cM$ is an injective fibration if and only if it is a $U$-fibration and the left map $\lambda_f$ of the \textbf{cobar factorization} $f = \rho_f \cdot \lambda_f$ defined below has a retraction over $Y$.
\[
\begin{tikzcd} X \arrow[drr, bend left, "\nu_X"] \arrow[ddr, bend right, "f"'] \arrow[dr, dashed, "\lambda_f" description] \\ & Ef \arrow[r, dotted] \arrow[d, dotted, "\rho_f"] \arrow[dr, phantom, "\lrcorner" very near start] & C(R,UR,UX) \arrow[d, "{C(R,UR,Uf)}"] \\  & Y \arrow[r, "\nu_Y"'] & C(R,UR,UY)
\end{tikzcd}
\]
In particular $X$ is injectively fibrant if and only if it is $U$-fibrant and the map $\nu_V \colon X \to C(R,UR,UX)$ has a retraction.
\end{thm}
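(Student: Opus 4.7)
The plan is to view the cobar factorization of $f$ as a kind of ``coflexible replacement'' and use it to reduce the injective fibration property to $U$-fibrancy plus the existence of a retraction, in the spirit of Lack's characterization of $2$-categorical flexibility via the pseudomorphism classifier.

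First I would establish the homotopical properties of the cobar factorization when $f$ is a $U$-fibration. The coaugmentation $\nu_Z \colon Z \to C(R,UR,UZ)$ is a simplicial homotopy equivalence for every object $Z$: applying $U$ equips the cosimplicial object $(UR)^\bullet UZ$ with extra codegeneracies coming from the counit $\epsilon \colon UR \To 1$, and the contraction descends to $\nu_Z$ via the simplicial enrichment; the technical hypothesis additionally makes $\nu_Z$ a monomorphism, hence a trivial cofibration. Since $UR$ is right Quillen for $UL \dashv UR$, each cosimplicial level $(UR)^{n+1}(Uf)$ is a fibration in $\cN$, and the cobar object is Reedy fibrant in the cosimplicial direction, so $C(R,UR,Uf)$ is a $U$-fibration. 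By right properness the pullback projection $\pi \colon Ef \to C(R,UR,UX)$ of $\nu_Y$ along $C(R,UR,Uf)$ is a weak equivalence, and since $\nu_X = \pi \cdot \lambda_f$ is a weak equivalence, the $2$-of-$3$ property forces $\lambda_f$ to be a weak equivalence, while $\lambda_f$ inherits monicness from $\nu_X$.

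The ``only if'' direction is then immediate: an injective fibration is in particular a $U$-fibration, so $\lambda_f$ is an injective trivial cofibration, and the canonical square with vertical maps $\lambda_f$ and $f$ (commuting because $\rho_f \cdot \lambda_f = f$) admits a lift $r \colon Ef \to X$, which satisfies $r \cdot \lambda_f = \id_X$ and $f \cdot r = \rho_f$ and so is exactly a retraction of $\lambda_f$ over $Y$.

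For the converse direction, the main obstacle is to establish that $C(R,UR,Uf) \colon C(R,UR,UX) \to C(R,UR,UY)$ has the right lifting property against \emph{all} $U$-trivial cofibrations of $\cM$, not merely against projective trivial cofibrations. By simplicial enrichment and preservation of limits by $U$ and by totalization, mapping a $U$-trivial cofibration $j \colon A \cto B$ into the Leibniz map associated with $C(R,UR,Uf)$ computes as the totalization of the Leibniz maps pairing $Uj$ with each $(UR)^{n+1}(Uf)$ in $\cN$; each of these is a trivial Kan fibration by the simplicial model category axiom applied in $\cN$, and Reedy fibrancy of the cobar object promotes the totalization to a trivial fibration. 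Now given a lifting problem $(a,b) \colon j \To f$ in $\cM$ with $j$ a $U$-trivial cofibration, the transposed lifting problem against $C(R,UR,Uf)$ with upper map $\nu_X \cdot a$ and lower map $\nu_Y \cdot b$ admits a strict lift $\ell \colon B \to C(R,UR,UX)$. The pair $(b,\ell)$ factors through a unique $k \colon B \to Ef$ by the pullback defining $Ef$, and composing with the retraction yields $r \cdot k \colon B \to X$, for which $f \cdot (r \cdot k) = \rho_f \cdot k = b$ is immediate, while $(r \cdot k) \cdot j = r \cdot \lambda_f \cdot a = a$ follows from the pullback uniqueness identity $k \cdot j = \lambda_f \cdot a$ together with the retraction identity $r \cdot \lambda_f = \id_X$.
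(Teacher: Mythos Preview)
Your proposal is correct and follows essentially the same strategy as the paper's (very terse) proof: for the ``only if'' direction both you and the paper use that the technical hypothesis makes $\lambda_f$ a $U$-trivial cofibration, against which the injective fibration $f$ lifts to produce the retraction; for the ``if'' direction both arguments rest on showing that $C(R,UR,Uf)$ is an injective fibration, so that $\rho_f$ is too and $f$, as its retract over $Y$, inherits the lifting property. The paper simply asserts that $\rho_f$ is ``a pullback of an injective fibration'' and invokes retract-closure, whereas you unfold both of these steps---the Reedy/totalization argument for the lifting property of $C(R,UR,Uf)$ and the explicit construction of the lift through $Ef$---so your account is more detailed but not a different route.
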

\begin{proof} If $f$ is a $U$-fibration then $\rho_f$ is a pullback of an injective fibration and hence an injective fibration. Thus if $f$ is a retract of it, it is an injective fibration.

Conversely if $f$ is an injective fibration, then $f$ is a $U$-fibration. The technical hypothesis implies that $\lambda_f$ is a $U$-trivial cofibration so $f$  $\lambda_f$, yielding the desired retraction over $Y$.
\end{proof}

\begin{thm} Let $\cE$ be a type-theoretic model topos and $T$ a simplicially enriched Quillen unit-cofibrant monad on $\cE$ having a simplicial right adjoint $S$. Then the category $\cE^T$ of $T$-algebras is again a type-theoretic model topos.
\end{thm}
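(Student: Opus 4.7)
The plan is to verify each of the four axioms of Definition~\ref{defn:ttmt} for $\cE^T$, using the adjoint triple $L \dashv U \dashv R \colon \cE^T \to \cE$ (where $U$ is forgetful, $L$ is the free-algebra functor, and $R$ is the cofree-coalgebra functor, which exists because $T$ has right adjoint $S$), together with Theorem~\ref{thm:inj-fib} for the fibered structure. The heart of the argument will be axiom (iv): constructing a locally representable, relatively acyclic notion of fibered structure on $\cE^T$ whose realization recovers the class of injective fibrations.

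For axiom (i), note that since $T$ is left adjoint to $S$, the functor $S$ inherits a comonad structure that is left exact (being a right adjoint), and $\cE^T$ is equivalent to the category $\cE_S$ of coalgebras for this comonad. By a classical theorem, coalgebras for a left exact comonad on a Grothendieck $1$-topos again form a Grothendieck $1$-topos. For axiom (ii), apply Proposition~\ref{prop:proj-inj} to the adjoint triple, noting that $UL \dashv UR$ is (up to isomorphism) the Quillen adjunction $T \dashv S$ by hypothesis, to obtain the injective model structure on $\cE^T$; its cofibrations are the $U$-cofibrations, i.e.\ the monomorphisms; properness transfers from $\cE$; the simplicial enrichment lifts through the simplicial adjoint triple; and combinatoriality follows from local presentability of the topos $\cE^T$. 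Axiom (iii) is then inherited from $\cE$: since pullbacks and simplicial tensors in $\cE^T$ are created by $U$ via the coalgebra description, simplicial local Cartesian closure transports from $\cE$.

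For axiom (iv), motivated by Theorem~\ref{thm:inj-fib}, which characterizes injective fibrations as $U$-fibrations $f$ such that the left map $\lambda_f$ of the cobar factorization admits a retraction over $\cod f$, I define a notion of fibered structure $\FF^T$ on $\cE^T$ whose structures on $f \colon X \to Y$ consist of (a) an $\FF$-structure on $Uf$ in $\cE$, and (b) a retraction $\rho \colon Ef \to X$ over $Y$ of the cobar left map $\lambda_f$. Both data are pullback-stable, because pullbacks in $\cE^T$ are created by $U$ and the cobar construction is built from limits and simplicial cotensors, both of which commute with pullback in a simplicially locally Cartesian closed category. By Theorem~\ref{thm:inj-fib}, $|\FF^T|$ is precisely the class of injective fibrations. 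Local representability of $\FF^T$ factors as a pullback of $U^*\FF$ (locally representable because $\FF$ is on $\cE$ and $U$ preserves the relevant limits) against a ``cobar retraction'' notion of fibered structure, the latter being locally representable since the cobar object $Ef$ is constructed from finite simplicial cotensors and pullbacks, so that retractions of $\lambda_f$ over $Y$ are classified by a genuine object of $\cE^T_{/Y}$.

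The principal obstacle I expect is verifying relative acyclicity of $\FF^T$: given a cofibration $i \colon A \cto B$ in $\cE^T$ and a pullback square of $\FF^T$-algebras witnessing a prospective $\FF^T$-morphism, one must realign the $\FF^T$-structure on the algebra over $B$ so that it restricts strictly along $i$ to the given structure on $A$. The $\FF$-component can be realigned using relative acyclicity of $\FF$ on $\cE$ applied to the monomorphism $Ui$. The delicate part is extending the cobar retraction compatibly: here the technical hypothesis in Theorem~\ref{thm:inj-fib} enters crucially, guaranteeing that $\lambda_f$ is a $U$-trivial cofibration, so that the ``object of retractions over $Y$'' forms an acyclic fibration over $Y$ in $\cE^T$ and the existing retraction over $A$ can be extended along $i$ by the enriched right lifting property of the simplicial Cisinski model structure. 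Organizing these two realignments so that they jointly produce an $\FF^T$-morphism will require careful bookkeeping but is the expected use of the hypotheses.
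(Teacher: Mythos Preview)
Your proposal is correct and follows essentially the same approach as the paper: the paper defines $\FF^T \coloneqq U^{-1}(\FF) \times_\EE \RR_E$, where $\RR_E$ is the notion of fibered structure of Example~\ref{ex:fun-fact} for the cobar functorial factorization---precisely your data (a) and (b)---and then invokes Theorem~\ref{thm:inj-fib} to identify $|\FF^T|$ with the injective fibrations, while the Grothendieck-topos and simplicial-local-cartesian-closure axioms are handled via the comonadic description exactly as you outline. One small correction: the cobar object $Ef$ is a totalization of a cosimplicial object, hence built from an \emph{infinite} limit of simplicial cotensors rather than a finite one, though this does not affect your local representability argument.
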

\begin{proof} There is an induced simplicial comonad structure on $S$ so that $T$-algebras coincide with $S$-coalgebras. In particular, $U \colon \cE^T \to \cE$ has both simplicial adjoints, and thus $\cE^T$ has an injective model structure by Proposition \ref{prop:proj-inj}. Moreover, by classical theorems, $\cE^T$ is a Grothendieck topos and simplicially locally cartesian closed (these facts rely more on the comonadicity than the monadicity).

If $\FF$ is the locally representable and relatively acyclic notion of fibered structure for the fibrations on $\cE$ let $\FF^T = U^{-1}(\FF) \times_\EE \RR_E$ where $\RR_E$ is the notion of fibered structure of Example \ref{ex:fun-fact} for the cobar functorial factorization. This notion of fibered structure is locally representable and relatively acyclic. Theorem \ref{thm:inj-fib} proves that $|\FF^T|$-algebras are exactly the injective fibrations.
\end{proof}

\begin{cor}\label{cor:ttmt-diagrams} If $\cE$ is a type-theoretic model topos and $\cD$ is a small simplicially enriched category, then the injective model structure on $\cE^{\cD^\op}$ is a type-theoretic model topos.
\end{cor}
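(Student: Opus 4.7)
The strategy is to realize $\cE^{\cD^\op}$ as a category of algebras for a simplicial monad on a product of copies of $\cE$ and then invoke the preceding theorem. First note that $\cE^{\ob\cD} \cong \prod_{\ob\cD} \cE$ is a type-theoretic model topos, as a small product of type-theoretic model toposes. The forgetful functor $U \colon \cE^{\cD^\op} \to \cE^{\ob\cD}$ admits simplicially enriched left and right adjoints $L \dashv U \dashv R$ given by simplicial left and right Kan extension along $\ob\cD \hookrightarrow \cD^\op$, and $U$ is simplicially monadic. Hence $\cE^{\cD^\op} \simeq (\cE^{\ob\cD})^T$ where $T := UL$ is a simplicial monad with simplicial right adjoint $S := UR$.

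The substantive point is verifying the Quillen unit-cofibrancy hypothesis of the preceding theorem. Using the coend formula
\[
(LX)_d = \coprod_{d' \in \ob\cD} \cD(d,d') \otimes X_{d'},
\]
the component of the unit $\eta_X \colon X \to UL X$ at $d \in \ob\cD$ factors as
\[
X_d \longrightarrow \cD(d,d) \otimes X_d \hookrightarrow \coprod_{d'} \cD(d,d') \otimes X_{d'},
\]
where the first map is obtained by tensoring the vertex inclusion $\{\id_d\} \hookrightarrow \cD(d,d)$ with $X_d$ and the second is inclusion of a summand. The first map is a cofibration because the vertex inclusion is a monomorphism of simplicial sets, $X_d$ is cofibrant (everything is, in a Cisinski structure), and tensoring with a monomorphism of simplicial sets preserves cofibrations by the simplicial Cisinski hypothesis on $\cE$. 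The second map is a monomorphism in the ambient Grothendieck 1-topos, hence a cofibration. Thus $\eta$ is componentwise a cofibration, i.e. a cofibration in the product (= pointwise injective) model structure on $\cE^{\ob\cD}$.

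Having established the hypotheses, the preceding theorem produces a type-theoretic model topos structure on $(\cE^{\ob\cD})^T$. The model structure constructed in the proof of that theorem lifts weak equivalences and fibrations along $U \colon (\cE^{\ob\cD})^T \to \cE^{\ob\cD}$, which is exactly the recipe of Proposition \ref{prop:proj-inj} producing the injective model structure on $\cE^{\cD^\op}$; so the two model structures coincide.

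The main obstacle I expect is confirming that simplicial monadicity of $U$ together with the characterization of injective fibrations via cobar retractions in Theorem \ref{thm:inj-fib} really does identify the model structure transported from $T$-algebras with the classical injective model structure on $\cE^{\cD^\op}$. Both are determined by the same $U$-fibrations and $U$-weak equivalences, so agreement is essentially by construction, but some bookkeeping is needed to track the comparison through the simplicial enrichment and the cobar construction.
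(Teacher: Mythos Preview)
Your approach is exactly the paper's: exhibit $\cE^{\cD^\op}$ as $T$-algebras over the type-theoretic model topos $\cE^{\ob\cD}$ for the monad $T=UL$, check that $T$ has a simplicial right adjoint and is Quillen unit-cofibrant, and invoke the preceding theorem. Your unit-cofibrancy verification is a correct elaboration of the paper's parenthetical ``essentially because the hom-categories in $\cD$ are cofibrant and the inclusion of the identity is a cofibration.''

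There is, however, a slip in your final two paragraphs: you write that the model structure produced by the theorem ``lifts weak equivalences and fibrations along $U$'' and that both model structures ``are determined by the same $U$-fibrations and $U$-weak equivalences.'' That describes the \emph{projective} model structure, not the injective one. The theorem's proof invokes Proposition~\ref{prop:proj-inj} to place the \emph{injective} model structure on $\cE^T$, which is characterized by the $U$-cofibrations and $U$-weak equivalences (with the injective fibrations determined by lifting). Once you correct ``fibrations'' to ``cofibrations,'' your identification goes through immediately and the worry you flag in the last paragraph dissolves: the theorem already names the model structure it outputs as the injective one, so there is no separate bookkeeping needed to match it with the injective model structure on $\cE^{\cD^\op}$.
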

\begin{proof} The monad in $\cE^{\ob\cD}$ whose category of algebras is $\cE^{\cD^\op}$ is Quillen unit-cofibrant---essentially because the hom-categories in $\cD$ are cofibrant and the inclusion of the identity is a cofibration---and has a simplicial right adjoint. 
\end{proof}

\subsection{Localizations}\label{ssec:localization}

The final required result is:

\begin{thm}\label{thm:ttml-localization} For any set $S$ of morphisms in a type-theoretic model topos $\cE$ such that $S$-localization is left exact, the localized model structure $\cE_{S}$ is again a type-theoretic model topos.
\end{thm}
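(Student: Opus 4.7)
The plan is to verify each of the four conditions of Definition \ref{defn:ttmt} for the left-Bousfield-localized model structure $\cE_S$. The underlying category is unchanged, so conditions (i) and (iii) — being a Grothendieck $1$-topos and being simplicially locally cartesian closed — are inherited from $\cE$ for free. Left Bousfield localization at a set of maps in a combinatorial simplicial model category whose cofibrations are the monomorphisms is a classical construction and yields a combinatorial simplicial Cisinski model structure with the same cofibrations, so the non-properness parts of (ii) are also routine. Right properness, which fails for general left Bousfield localizations, should follow from the hypothesis that $S$-localization is \emph{left exact}: since homotopy pullbacks are preserved by the reflector, a pullback of an $S$-local equivalence along an $\cE$-fibration between fibrant objects is again an $S$-local equivalence, and this plus standard fibrant replacement arguments upgrade right properness from $\cE$ to $\cE_S$.

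The real content of the theorem is condition (iv): constructing a locally representable, relatively acyclic notion of fibered structure $\FF_S$ with $|\FF_S|$ equal to the fibrations of $\cE_S$. The characterization of such fibrations is that $f\colon X \fto Y$ is an $\cE_S$-fibration iff it is an $\cE$-fibration and, homotopically, each of its fibers is $S$-local. Accordingly I would define
\[ \FF_S \;\coloneqq\; \FF \times_\EE \LL_S \]
where $\LL_S$ is a notion of fibered structure expressing ``all fibers are $S$-local.'' Since the pullback of notions of fibered structure in $\PSH(\cE)$ preserves local representability and relative acyclicity, it suffices to construct $\LL_S$ with these two properties and then read off $|\FF_S|$ by combining the two conditions pointwise.

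To build $\LL_S$ I would invoke the Anel--Biedermann--Finster--Joyal machinery (cited in the excerpt) that upgrades a left exact localization of an $\infty$-topos to an \textbf{internal} left exact modality. Concretely, inside the universe $\pi\colon\tilde{U}\fto U$ for $\FF$ that was constructed in \S\ref{ssec:fibered}, this produces an internal reflective subuniverse $U_S \hookrightarrow U$ of $S$-local types, together with a relative reflector. Taking the pullback $\tilde{U}_S \coloneqq U_S \times_U \tilde{U} \to U_S$ furnishes the classifying fibration for $\LL_S$. Local representability of $\LL_S$ follows because $S$-locality of a fibration is detected by its classifying map landing in the subuniverse $U_S$, a condition that is itself checkable after pullback to representables. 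Relative acyclicity is a realignment statement for $\LL_S$: given an $\LL_S$-algebra over $A$ whose underlying $\FF$-algebra extends along a cofibration $i\colon A\cto B$ to an $\LL_S$-algebra up to equivalence, the extension can be strictified. This reduces to the realignment property of the ambient universe $U$ (Definition \ref{defn:fib-universe}) together with the fact that $U_S \hookrightarrow U$ is a subobject, so once the extended classifying map into $U$ agrees with the original on $A$ and factors through $U_S$ up to homotopy, a retraction-type argument upgrades it to factor through $U_S$ strictly.

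The hard part will be the second half of the previous paragraph: bridging the $\infty$-categorical input (``left exactness of $S$-localization'') to the strict $1$-categorical output (an internal subuniverse $U_S\hookrightarrow U$ in $\cE$ that is locally representable and makes $\LL_S$ relatively acyclic on the nose). The ABFJ theorem supplies the internal reflective subuniverse at the level of the presented $\infty$-topos, but realizing it as a strict subobject of the strict universe $U$ — and verifying that the strict pullback classification of $\LL_S$ matches up with the homotopical notion of ``$S$-local fibers'' — requires careful use of the strict universe apparatus of \S\ref{ssec:fibered} and the descent properties established for $\cE$. Once $\LL_S$ and hence $\FF_S$ are in place, the identification $|\FF_S| = \fib(\cE_S)$ comes from the standard description of fibrations in a left Bousfield localization (fibrations between $S$-local fibrant objects are $\cE$-fibrations, and a general fibration in $\cE_S$ is an $\cE$-fibration whose codomain-pullbacks have $S$-local fibers), which is exactly what $\FF_S$-structure encodes.
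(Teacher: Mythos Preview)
Your overall structure matches the paper's proof closely: conditions (i)--(iii) are inherited easily, right properness follows from left exactness, and the real content is condition (iv), the construction of a locally representable, relatively acyclic notion of fibered structure $\LL_S$ whose algebras are the $\cE_S$-fibrations, with the ABFJ work as the key external input.

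Where you diverge is in the proposed construction of $\LL_S$. You suggest realizing it via an \emph{internal subuniverse} $U_S \hookrightarrow U$ of $S$-local types, obtained by importing the ABFJ internal-modality picture and then strictifying. The paper instead invokes the other face of the ABFJ theorem, the characterization of left exact localizations via iterated fibrant diagonals: after replacing $S$ by fibrations between fibrant objects, left exactness guarantees that $S$-localization coincides with $(S^\Delta)^{\textup{pb}}$-localization, and $S$-locality of a fibration is then expressible as orthogonality against an explicit class of maps. Such orthogonality conditions can be packaged directly as a locally representable notion of fibered structure (via internal-hom and contractibility objects, much as in \S\ref{ssec:univalence}), bypassing the need to first build $U_S$ as a strict subobject of $U$.

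Your route is not wrong in principle, but the step you flag as hard---strictifying the $\infty$-categorical reflective subuniverse to a genuine subobject $U_S \hookrightarrow U$ with the right realignment behavior---is genuinely circular-feeling here: the strict universe machinery you want to lean on is exactly what condition (iv) is meant to produce for $\cE_S$. The paper's iterated-diagonal approach avoids this by working at the level of maps rather than universes: it defines $\LL_S$ by an explicit structured-lifting condition that is manifestly local and whose relative acyclicity can be checked directly, and only \emph{afterwards} does the general universe-construction theorem produce $U_S$. So the logical order is reversed from what you propose, and that reversal is what makes the argument go through cleanly.
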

\begin{proof}
Since $\cE_S$ has the same underlying category and cofibrations, it is a simplicially locally cartesian closed Grothendieck 1-topos with cofibrations being the monomorphisms. It is combinatorial and simplicial and right proper since the localization is left exact. 

We may replace $S$ by weakly equivalent fibrations between fibrant objects without changing the localization. Then we claim that there is a locally representable, relatively acyclic notion of fibered structure $\LL_S$ such that the $\LL_S$-algebras are the correct fibrations. 
\end{proof}

The construction of $\LL_S$, which we don't give here, relies on a characterization of left exact localizations from \cite{ABFJ} that involves the following construction. Given a fibration $f \colon A \fto B$ between fibrant objects, the \textbf{fibrant diagonal} is the fibrant replacement $\Delta f \colon A' \fto A \times_B A$ of the strict diagonal. Write $\Delta^nf$ for the $n$-fold iterate with $\Delta^0f = f$, write $S^\Delta$ for the class of iterated fibrant diagonals of morphisms in $S$, and write $(S^\Delta)^\textup{pb}$ for all pullbacks of these morphisms. Anel, Biedermann, Finster, and Joyal chacterize left exact localizations as follws: 

\begin{thm}[{\cite{ABFJ}}] For any set $S$ of fibrations between fibrant objects:
\begin{enumerate}
\item $(S^\Delta)^\textup{pb}$-localization is left exact.
\item If $S$-localization is left exact it coincides with $(S^\Delta)^\textup{pb}$-localization.
\end{enumerate}
\end{thm}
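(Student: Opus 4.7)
The plan is to pass to the $\infty$-topos $\cX$ presented by $\cE$ and reduce the theorem to the following key characterization, which I would first establish as a lemma: a reflective accessible localization $L \colon \cX \to \cX_W$ at a strongly saturated class $W$ of morphisms is left exact if and only if $W$ is closed under fibered diagonals, in the sense that $f \in W$ implies $\Delta f \in W$. The ``only if'' direction is nearly immediate: a left-exact $L$ preserves pullbacks, so $L(\Delta f) \simeq \Delta(Lf)$, and the diagonal of an equivalence is an equivalence.

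Given the lemma, part (1) is almost formal. By construction $(S^\Delta)^{\textup{pb}}$ contains every iterated fibrant diagonal $\Delta^n f$ of every $f \in S$ together with all pullbacks of such. I would then check that the strongly saturated class $\overline{W}$ generated by $(S^\Delta)^{\textup{pb}}$ is itself closed under fibered diagonals: diagonals commute with pullback, interact with composition and pushout up to further diagonals already present in $S^\Delta$, and are preserved by transfinite composites. An induction on the saturation process gives $\Delta \overline{W} \subseteq \overline{W}$, and the key lemma then yields left-exactness of $(S^\Delta)^{\textup{pb}}$-localization.

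For part (2), suppose $S$-localization is left exact and let $W_S$ denote its associated strongly saturated class of equivalences. By the ``only if'' direction of the key lemma, $W_S$ is closed under fibered diagonals, so a straightforward induction on $n$ yields $\Delta^n f \in W_S$ for every $f \in S$, whence $S^\Delta \subseteq W_S$. Since left-exact localizations have pullback-stable classes of equivalences, $(S^\Delta)^{\textup{pb}} \subseteq W_S$, and therefore the saturated class generated by $(S^\Delta)^{\textup{pb}}$ sits inside $W_S$. The reverse inclusion is immediate from $S \subseteq (S^\Delta)^{\textup{pb}}$, so the two saturated classes, and hence the two localizations, coincide.

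The hard part is the ``if'' direction of the key lemma: that closure of $W$ under fibered diagonals alone forces the localization to be left exact. This is the technical heart of the ABFJ result. The strategy I would follow leverages the descent property of $\cX$ (axiom \eqref{itm:descent} of Definition \ref{defn:descent}) to express the failure of $L$ to preserve a given pullback square as a colimit of pullbacks of iterated diagonals of the $W$-maps occurring in the square; diagonal-closure of $W$ then forces this defect to be inverted by $L$. Making this decomposition rigorous requires a delicate \v{C}ech-style argument comparing the diagonal filtration of a map with its image under $L$, and I expect this to be the most substantial piece of the proof.
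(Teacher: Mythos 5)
The paper does not prove this theorem itself --- it is quoted from \cite{ABFJ} and used as a black box in the proof of Theorem \ref{thm:ttml-localization} --- so there is no in-paper argument to compare against; I will assess your sketch on its own terms.

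Your key lemma is missing a hypothesis. The Anel--Biedermann--Finster--Joyal characterization is that an accessible localization of an $\infty$-topos is left exact if and only if its class of acyclic maps is a \emph{congruence}: closed under base change (pullback) \emph{and} under fibered diagonals. A strongly saturated class in Lurie's sense is closed under cobase change, colimits, and two-of-three, none of which entails pullback stability; and if the class of $L$-equivalences fails to be pullback-stable then $L$ cannot be left exact, since a left-exact reflector preserves pullbacks of $L$-equivalences. This is precisely why the theorem localizes at $(S^\Delta)^{\textup{pb}}$ rather than $S^\Delta$. Consequently your verification for part (1) is incomplete: besides showing the saturation $\overline{W}$ of $(S^\Delta)^{\textup{pb}}$ is diagonal-closed, you must also show it is base-change closed, and that does not follow formally from pullback-closure of the generating set --- it is one of the places where descent in the $\infty$-topos does real work. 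The induction establishing $\Delta\overline{W}\subseteq\overline{W}$ is also thinner than it appears: the diagonal of a pushout or a transfinite composite is not simply a diagonal of the constituents, so ``interact with composition and pushout up to further diagonals already present in $S^\Delta$'' needs to be made precise. Finally, as you acknowledge, the hard direction of the lemma is left entirely open; once the base-change clause is restored, that direction is essentially the full content of the ABFJ theorem, so the proposal is best read as a plausible outline rather than a proof.
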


\begin{cor} Any $\infty$-topos  can be presented by a model category that interprets Martin-L\"{o}f type theory with the structure listed above.
\end{cor}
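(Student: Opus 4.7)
The plan is to assemble this corollary directly from the three preservation results already established in this section. By Theorem \ref{thm:model-topos}, every $\infty$-topos is presented by a model topos of the form $\sSet^{\cD^\op}_S$ -- a left exact localization of the injective model structure on simplicial presheaves on a small simplicial category $\cD$ at a set of maps $S$. In view of Theorem \ref{thm:ttmt-interpretation}, it therefore suffices to exhibit any such $\sSet^{\cD^\op}_S$ as a type-theoretic model topos.

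I would build up through the successive closure properties. The base case is Example \ref{ex:ttmt-spaces}: the category of simplicial sets with the Kan--Quillen model structure is a type-theoretic model topos, with the Kan fibrations providing the required locally representable, relatively acyclic notion of fibered structure. Next, Corollary \ref{cor:ttmt-diagrams} promotes this to show that $\sSet^{\cD^\op}$ with the injective model structure is a type-theoretic model topos. Finally, because $S$-localization is by hypothesis left exact, Theorem \ref{thm:ttml-localization} ensures that $\sSet^{\cD^\op}_S$ remains a type-theoretic model topos. Feeding this into Theorem \ref{thm:ttmt-interpretation} produces the interpretation of Martin-L\"{o}f type theory with the full list of structure.

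At this stage I do not expect any genuinely new obstacle: the hard work was done upstream, in the cobar-construction analysis underpinning Corollary \ref{cor:ttmt-diagrams} (characterizing injective fibrations as $U$-fibrations whose cobar factorization admits a retraction of $\lambda_f$) and in the construction of the notion of fibered structure $\LL_S$ underpinning Theorem \ref{thm:ttml-localization} (which leveraged the Anel--Biedermann--Finster--Joyal characterization of left exact localizations via iterated fibrant diagonals). The only mild delicacy is to note that the statement concerns the \emph{presentation} of an $\infty$-topos, not a particular model: this is what allows us to freely replace any given presenting model topos by the Quillen-equivalent $\sSet^{\cD^\op}_S$ supplied by Theorem \ref{thm:model-topos} without weakening the conclusion.
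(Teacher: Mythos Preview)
Your proposal is correct and follows essentially the same route as the paper's proof: present the $\infty$-topos by $\sSet^{\cD^\op}_S$ via Theorem \ref{thm:model-topos}, then chain Example \ref{ex:ttmt-spaces}, Corollary \ref{cor:ttmt-diagrams}, and Theorem \ref{thm:ttml-localization} to see this is a type-theoretic model topos, and conclude via Theorem \ref{thm:ttmt-interpretation}. The paper's version is terser but the logical structure is identical.
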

\begin{proof}
Every $\infty$-topos can be presented by a model topos, and every model topos is Quillen equivalent to a left exact localization of an injective model structure on simplicial presheaves. By Example \ref{ex:ttmt-spaces}, Corollary \ref{cor:ttmt-diagrams}, and Theorem \ref{thm:ttml-localization} such model categories are type-theoretic model toposes. Therefore, any Grothendieck $\infty$-topos can be presented by a type-theoretic model topos.
\end{proof}

\end{document}